\subjclass[2020]{58J65, 37D35, 37D40}
\keywords{Foliated Brownian motion, central limit theorem, geodesic flow in negative curvature}
\newtheorem{thm}{Theorem}
\newtheorem{prop}{Proposition}[section]
\newtheorem{lem}{Lemma}[section]
\newtheorem{defn}{Definition}
\newtheorem{rmk}{Remark}
\newtheorem{coro}{Corollary}
\newcommand{\card}{\mathrm{Card}}
\newcommand{\dist}{\mathrm{d}}
\newcommand{\vol}{\mathrm{vol}}
\newcommand{\m}{\mathrm{m}}	
\newcommand{\rv}{\mathrm{v}}
\newcommand{\rw}{\mathrm{w}}
\newcommand{\ld}{\ell}		
\newcommand{\hs}{h}		
\newcommand{\htop}{h_{\mathrm{top}}}
\newcommand{\gs}{g_{\T}}
\newcommand{\hh}{\mathrm{m}^{\Q}}		
\newcommand{\Hh}{\mathrm{m}^{\widetilde\Q}}
\newcommand{\g}{\mathbf{g}}		
\newcommand{\G}{\mathrm{G}}		
\newcommand{\kk}{\mathrm{k}}		
\newcommand{\bb}{\mathrm{b}}	
\newcommand{\HH}{\mathcal{H}} 
\newcommand{\M}{\mathcal{M}}  	
\newcommand{\MM}{\widetilde{\mathcal{M}}}
\newcommand{\N}{\mathcal{N}}   
\newcommand{\LL}{\mathcal{L}}   
\newcommand{\PP}{\mathcal{P}}
\newcommand{\T}{\mathcal{T}}    	
\newcommand{\W}{\mathcal{W}}  	
\newcommand{\Q}{\mathcal{Q}}   	
\newcommand{\C}{\mathcal{C}}   	
\newcommand{\K}{F^{\mathrm{BM}}}  	 	
\newcommand{\Fi}{\mathscr{F}}	
\newcommand{\J}{F^{su}}
\begin{document}
\title[Central limit theorem of Brownian motions]{Central limit theorem of 
Brownian motions 
in pinched negative curvature}

\author{Jaelin Kim}

\address{
Department of Mathematical Sciences, Seoul National University, Seoul 151-747, Republic of Korea.
}
\email{kimjl@snu.ac.kr}

\begin{abstract}
We prove the central limit theorem of random variables induced by distances to Brownian paths and Green functions on the universal cover of Riemannian manifolds of finite volume with pinched negative curvature.
We further provide some ergodic properties of Brownian motions and an application of the central limit theorem to the dynamics of geodesic flows in pinched negative curvature.
\end{abstract}

\maketitle

\section{Introduction}
Let $\MM$ be a simply connected complete Riemannian manifold of dimension $d\ge2$ with pinched negative curvature; its sectional curvature is uniformly bounded between two negatives.
We further assume that $\MM$ admits a finite-volume quotient $\M$ and the first derivative of the sectional curvature is uniformly bounded.

The Brownian motion $(\widetilde\omega_{t})_{t\in\mathbb{R}_{+}}$ on $\MM$ starting from $x$ is transient as $\MM$ is negatively curved. 
Therefore, the distance $\dist(x, \widetilde\omega_{t})$ goes to infinity as $t \to \infty$ with probability 1
and its asymptotic growth is linear (\cite{Guivarch_1981}): there is $\ld>0$ such that
\begin{equation*}
\ld
=
\lim_{t\to\infty} \frac{1}{t}\dist_{}(x, \widetilde\omega_{t}).
\end{equation*}
Due to the pinched negative curvature, the Green function $\G(x,y)$ on $\MM$ tends to zero as $\dist(x,y) \to \infty$. 
Hence $\G(x, \widetilde\omega_{t}) \to 0$ as $t\to\infty$ and it decays exponentially fast with probability 1 (\cite{Kai_1986}): there exists $\hs>0$ such that
\begin{equation*}
\hs 
=
\lim_{t\to\infty} -\frac{1}{t}\log\G(x, \widetilde\omega_{t}).
\end{equation*}

Even though Brownian motions on manifolds with pinched negative curvature has been studied for a long time,
the majority of the results holds for either every Cartan-Hadamard manifolds or co-compact ones
and few are known for the cases in between, especially for the co-finite manifolds $\MM$.
Our main result, the central limit theorem of random processes
$Y^{\ld}_{t}(\widetilde\omega)= \dist_{}(x,\widetilde\omega_{t})-t\ld$
and
$Y^{\hs}_{t}(\widetilde\omega)= \log \G(x, \widetilde\omega_{t})+t\hs$,
is a generalization of the central limit theorem in co-compact manifolds proved by F. Ledrappier in \cite{Led_1995}.

\begin{thm} \label{CLT}
The distributions of 
$\frac{1}{\sigma_{\bb}\sqrt{t}}Y^{\ld}_{t}$ and 
$\frac{1}{\sigma_{\kk}\sqrt{t}}Y^{\hs}_{t}$ are asymptotically normal for some positive constants $\sigma_{\bb}, \sigma_{\kk}$. 
More precisely, for every $x \in \MM$,
\begin{align*}
\mathbb{P}_{x}
\left[
\frac{Y^{\ld}_{t}}{\sigma_{\bb}\sqrt{t}}
\le r
\right] ,
\mathbb{P}_{x}
\left[
\frac{Y^{\hs}_{t}}{\sigma_{\kk}\sqrt{t}}
\le r
\right] 
\to 
\frac{1}{\sqrt{2\pi}}\int_{-\infty}^{r} \exp\left( - \frac{s^{2}}{2} \right) ds,
\textrm{ as } t\to\infty,
\end{align*}
where $\mathbb{P}_{x}$ is the probability measures on the space 
$\C(\mathbb{R}_{+}, \MM)$ 
of continuous sample paths which defines the Brownian motion on $\MM$ starting from $x$.
\end{thm}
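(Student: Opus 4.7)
The plan is to reduce the theorem to a martingale central limit theorem for additive cocycles of a foliated Brownian motion, following the strategy Ledrappier introduced for the cocompact case \cite{Led_1995}. The entire difficulty lies in adapting each step to the finite-volume setting, where excursions of $\widetilde\omega_{t}$ into the cusps of $\M$ must be quantitatively controlled.

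First I would pass from the non-smooth observables $\dist(x,\widetilde\omega_{t})$ and $\log\G(x,\widetilde\omega_{t})$ to the Busemann cocycle. Since in pinched negative curvature the path $\widetilde\omega_{t}$ converges almost surely to some $\xi_{\infty}(\widetilde\omega)\in\partial\MM$, one has
\begin{equation*}
\dist(x,\widetilde\omega_{t})=\bb_{\xi_{\infty}}(x,\widetilde\omega_{t})+o(\sqrt{t}),
\qquad
-\log\G(x,\widetilde\omega_{t})=\hs\cdot\bb_{\xi_{\infty}}(x,\widetilde\omega_{t})+\log\kk(x,\widetilde\omega_{t},\xi_{\infty})+o(\sqrt{t}),
\end{equation*}
where $\bb_{\xi}(\cdot,\cdot)$ is the Busemann function and $\kk$ denotes the Martin kernel. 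This replaces two irregular functionals by smooth additive cocycles and reduces both claims of the theorem to a single CLT scheme applied to two observables.

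Next I would lift to the foliated Brownian motion on the unit tangent bundle $T^{1}\M$. The pair $(\widetilde\omega_{t},\xi_{\infty})$ projects to a diffusion along the stable foliation of $T^{1}\M$ whose infinitesimal generator is the leafwise Laplacian. I would then establish the existence of a finite stationary measure $\m$ for this diffusion, ergodic and of full support, which is where the finiteness of $\vol(\M)$ and the uniform bound on the derivative of the sectional curvature enter. The Busemann cocycle and the log-Martin-kernel correction then become additive functionals of a Markov process preserving $\m$, with means $\ld$ and $\hs$ per unit time. A Gordin-type martingale approximation, solving the Poisson equation for the leafwise generator, produces a decomposition
\begin{equation*}
\bb_{\xi_{\infty}}(x,\widetilde\omega_{t})-t\ld = M^{\bb}_{t}+R^{\bb}_{t},
\end{equation*}
with $M^{\bb}_{t}$ a square-integrable martingale with stationary increments and $R^{\bb}_{t}=o(\sqrt{t})$ in $L^{2}$; an analogous identity holds for the Green observable. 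The continuous-time martingale CLT then delivers the asymptotic normality with $\sigma^{2}_{\bb}$ and $\sigma^{2}_{\kk}$ equal to the asymptotic quadratic variations per unit time, which I would show to be strictly positive by ruling out cohomology of the cocycles to a constant via the non-triviality of stable holonomy.

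The principal obstacle is the non-compactness of $\M$. Finiteness of the leafwise harmonic measure $\m$, a spectral gap for the foliated Laplacian on $L^{2}(T^{1}\M,\m)$, and the square-integrability of the Busemann cocycle are all automatic in the cocompact case but must here be derived from recurrence estimates controlling the time Brownian motion spends in the cusps. Quantifying how rarely and how briefly $\widetilde\omega_{t}$ visits the thin parts of $\M$, and showing that such excursions contribute only a uniformly bounded increment of variance, is the technical crux on which the whole argument rests.
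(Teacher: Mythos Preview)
Your overall architecture is the same as the paper's: reduce to the Busemann and log-Martin-kernel cocycles, view them as additive functionals of the foliated Brownian motion on $T^{1}\M$ with respect to the harmonic measure, solve a Poisson equation for the leafwise Laplacian to produce a martingale plus bounded coboundary, and invoke a martingale CLT. That part is fine, and your identification of non-compactness as the crux is correct.

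However, two points of your plan diverge from what actually makes the argument go through. First, you propose to work with an $L^{2}$ spectral gap for the foliated Laplacian. The paper instead proves exponential contraction of the Markov semigroup $\Q^{t}$ on a \emph{H\"older space} $\mathcal{L}^{\tau}$ (Theorem~\ref{Ctr}). This is not a cosmetic choice: the solution $u$ of $\Delta_{s}u=-f$ must be \emph{bounded} so that the coboundary $u(\omega_{t})-u(\omega_{0})$ is $O(1)$ rather than merely $o(\sqrt{t})$ in $L^{2}$; an $L^{2}$ solution on a non-compact space need not be bounded, and square-integrability of the cocycle is not the issue. Second, your proposed mechanism for handling the cusps---recurrence estimates on excursion times---is not what the paper does. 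The contraction on $\mathcal{L}^{\tau}$ is obtained in two pieces: (i) the H\"older semi-norm contracts because the expected increment of the Gromov product along Brownian paths is bounded below by $(d-1)aT$, uniformly in the basepoint; this is proved via Rauch comparison combined with the $\C^{2}$-convergence of normalized distance functions to Busemann functions (Proposition~\ref{C2conv}, Lemma~\ref{ctrlem}), and explicitly replaces Ledrappier's double-process argument, which fails without compactness; (ii) the uniform-norm contraction comes from a Doeblin-type estimate (Lemma~\ref{Doeblin}) requiring a uniform diagonal bound $\sup_{x}P(t_{0},x,x)<\infty$ for the heat kernel on $\M$, established by Gaussian upper bounds and a Poincar\'e-series argument in each cusp. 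Neither ingredient is a recurrence estimate in your sense.

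A smaller point: your displayed decomposition for $-\log\G$ contains a spurious term $\hs\cdot\bb_{\xi_{\infty}}$. The actual comparison the paper uses is that $\log\G(\widetilde\omega_{0},\widetilde\omega_{t})-\log\kk(\widetilde\omega_{0},\widetilde\omega_{t},\xi)$ stays bounded along almost every path for any fixed $\xi\neq\widetilde\omega_{\infty}$, by Harnack and Ancona; no Busemann term enters that step.
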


F. Ledrappier introduced a double process to provide a lower bound for the expectation of the Gromov product at Brownian points in \cite{Led_1995}. 
The lower bound implies the contraction property of the foliated Brownian motion, which plays an important role in the proof of the central limit theorem.
However, since the double process argument is not valid in the absence of compactness, 
we instead provide an argument using the $\C^{2}$-convergence of the normalized distance functions to the Busemann function in pinched negatively curved manifolds.
Although the resulting lower bound is less sharp than the lower bound by the double process argument, it is sufficient for the proof of the contraction property.

As in \cite{Led_1995}, we use the contraction property of the foliated Brownian motion (Theorem \ref{Ctr}) on H\"older spaces to solve the leafwise heat equation on the unit tangent bundle for the foliated Laplacian. 
We construct Martingales from the solutions of the heat equation with the initial conditions of the Busemann function and the logarithm of the Martin kernel of the Brownian motion. 
We prove that they are asymptotically normal and have the same distributions with the random variables of our interest.

As a consequence of the central limit theorem, we provide a characterization for the asymptotic harmonicity of $\MM$ with an assumption for thermodynamic formalism.
We say that $\MM$ is \textit{asymptotically harmonic} if the mean curvature of the horospheres of $\MM$ is constant. 
If $\MM$ is asymptotically harmonic then the Liouville measure on the unit tangent bundle of $\M$ has maximal entropy for the geodesic flow.
The characterization reveals an interplay between the stochastic properties, the geometry and the dynamics of the geodesic flow of $\MM$.
Indeed, an asymptotically harmonic manifold $\MM$ is a symmetric space if it is the universal cover of a compact negatively curved manifold (\cite{FouLab_1992}, \cite{BFL_1992}, \cite{Led_1990}).
The Martin kernel of the Brownian motion gives rise to a H\"older continuous function $\K$ on $\T^{1}\M$, 
which helps us understand the asymptotic behavior of Brownian paths and correlation with geodesics.
An equilibrium state of $\K$ is a geodesic flow-invariant Borel probability measure on $\T^{1}\M$ which maximizes the pressure of $\K$.
For compact manifolds, 
every H\"older continuous function admits a unique equilibrium states (\cite{Franco_1977})
while the existence is not always guaranteed for finite-volume manifolds.

\begin{thm}\label{AsympH}
If $\K$ admits an equilibrium state, then
\begin{equation*}
\sigma_{\kk}^{2}\ge 2\hs.
\end{equation*}
The equality holds if and only if $\MM$ is asymptotically harmonic.
\end{thm}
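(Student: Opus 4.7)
The plan is to realize both $\sigma_{\kk}^{2}$ and $\hs$ as integrals against a common $\gs$-invariant measure on $\T^{1}\M$, and then deduce the inequality and the equality case from the variational principle for pressure applied to the potential $\K$.

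I would first revisit the martingale construction behind Theorem \ref{CLT} to extract a Green--Kubo type variance identity. Since $\log \G(x,\cdot)$ is harmonic away from $x$, Ito's formula decomposes $Y^{\hs}_{t}$ into a martingale of quadratic variation $\int_{0}^{t}|\nabla\log \G(x,\widetilde\omega_{s})|^{2}\,ds$ plus a remainder that drifts at rate $-\hs$. Passing to $\T^{1}\M$ via the asymptotic ``stable'' structure of $\log \G$ (the Martin kernel) and using the contraction property (Theorem \ref{Ctr}) to solve the associated foliated Poisson equation $\LL U = \K+\hs$ in a H\"older class, this yields an expression
\[
\sigma_{\kk}^{2} \;=\; 2\int_{\T^{1}\M}(\K+\hs)\,U\,d\nu,
\]
where $\nu$ is the harmonic measure on $\T^{1}\M$. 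The Kaimanovich-type identity $\hs = -\int \K\,d\nu$ ensures the datum is centred and recasts the target inequality in a form directly comparable to a pressure bound.

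Next I would invoke the variational principle with the potential $\K$. Because $\K$ admits an equilibrium state $m_{\K}$, we have $P(\K) = h_{m_{\K}}(\gs) + \int \K\,dm_{\K}$, and restricting the variational principle to the $\gs$-invariant measure $\nu$ gives $P(\K) \ge h_{\nu}(\gs) - \hs$. A supermartingale argument modeled on Ledrappier's compact case --- using that $\exp\int_{0}^{t}\K\circ g_{s}\,ds$ is a ratio of Martin kernels along a geodesic, hence controlled by Ancona's Harnack inequality in pinched negative curvature --- establishes the dual bound $P(\K)\le 0$. Combining these with the Green--Kubo identity through a Cauchy--Schwarz estimate on $(\K+\hs)\,U$ delivers $\sigma_{\kk}^{2}\ge 2\hs$. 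For the equality case, saturation of Cauchy--Schwarz forces $\K \equiv -\hs$ for $\nu$-a.e.\ $v$; since $\K$ is the infinitesimal change of $\log$ of the Martin kernel along the geodesic flow, this constancy is equivalent to the mean curvature of every horosphere being the constant $-\hs$, i.e.\ to asymptotic harmonicity of $\MM$. Conversely, if $\MM$ is asymptotically harmonic then $\K\equiv -\hs$, both integrals collapse, and equality is immediate.

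The hardest step will be establishing the pressure upper bound $P(\K)\le 0$ and the well-posedness of the foliated Poisson equation on $\T^{1}\M$ in the presence of cusps, together with justifying the passage to the limit in the Green--Kubo identity when the foliated Brownian semigroup lacks a uniform spectral gap near infinity. The hypothesis that $\K$ admits an equilibrium state is precisely what lets us convert the pressure bound into the desired measure-theoretic inequality and pin down the rigidity forced by equality.
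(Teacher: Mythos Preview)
Your variance identity is set up with the wrong observable and the wrong measure. From the martingale construction in the proof of Theorem~\ref{CLT} (see (\ref{QV2})), the variance is
\[
\sigma_{\kk}^{2} \;=\; 2\int_{\T^{1}\M}\|\nabla K + \nabla u_{\kk}\|^{2}\,d\hh,
\]
where $K(y,\xi)=\log\kk(x_{0},y,\xi)$, the corrector $u_{\kk}$ solves the \emph{foliated heat} equation $\Delta_{s}u_{\kk}=\|\nabla K\|^{2}-\hs$, and the integration is against the $\Q^{t}$-stationary measure $\hh$, not the $\g^{t}$-invariant equilibrium state $\nu$. The potential $\K(\rv)=-\langle X(\rv),\nabla K(\rv)\rangle$ is only the component of $-\nabla K$ in the flow direction and does not govern the quadratic variation of the Brownian martingale, so a Green--Kubo identity built from $\K$ cannot recover $\sigma_{\kk}^{2}$. (Incidentally, $-\int\K\,d\nu=h_{\nu}=\hs/\ld$ by Theorem~\ref{SE}, not $\hs$, so your centring is also off.)

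The paper obtains the inequality without any pressure bound or Cauchy--Schwarz. Expanding the square above, the cross term $4\int\langle\nabla\log\kk,\nabla u_{\kk}\rangle\,d\hh$ equals $-2\int\Delta_{s}u_{\kk}\,d\hh=0$ by the identity $2\int\langle\nabla\log\kk,\nabla\varphi\rangle\,d\hh=-\int\Delta_{s}\varphi\,d\hh$, which follows from harmonicity of $y\mapsto\kk(x,y,\xi)$ and Green's formula on $\M$. Together with $\int\|\nabla K\|^{2}\,d\hh=\hs$ (Proposition~\ref{intcha}) this gives directly
\[
\sigma_{\kk}^{2}=2\hs+2\int\|\nabla u_{\kk}\|^{2}\,d\hh\ \ge\ 2\hs,
\]
with equality iff $u_{\kk}$ is constant, i.e.\ $\|\nabla\log\kk\|^{2}\equiv\hs$. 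The equilibrium-state hypothesis is not needed for the inequality; it enters only in the rigidity step, where the chain $\hs=\ld h_{\nu}\le\ld\,\htop\le\ld\sqrt{\hs}\le\hs$ forces $\nabla\log\kk$ to be parallel to $X$, so that the horospherical mean curvature $\Delta\log\kk=-\|\nabla\log\kk\|^{2}$ is constant. Your proposed route through $P(\K)\le0$ and a Cauchy--Schwarz bound on $\int(\K+\hs)U$ does not visibly produce the sharp constant $2\hs$, and its saturation would at best yield $\K\equiv\mathrm{const}$, which is strictly weaker than $\|\nabla\log\kk\|^{2}\equiv\hs$ and does not by itself give asymptotic harmonicity.
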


In Section 2, we introduce the heat kernel and the Brownian motion on $\MM$. We also recall preliminaries of the geometry of manifolds with pinched negative curvature, the ergodic theory and thermodynamic formalisms for their geodesic flow.
We prove Theorem \ref{CLT} in Section 3 while Section 4 is devoted to the proof of the contraction property (Theorem \ref{Ctr}).
Section 4 also contains a diagonal estimate of the heat kernel and the proof of exponential ergodicity of the Brownian motion on $\M$.
In Section 5, we prove ergodic properties of the Brownian motions which generalize the results in \cite{Led_1988}. 
We conclude the section with the proof of Theorem \ref{AsympH}.
\\

\textit{Acknowledgement}. It is a pleasure to thank Fran\c{c}ois Ledrappier and Seonhee Lim for sharing their insights and helpful comments. The work is supported by Samsung Science and Technology Foundation under Project Number SSTF-BA1601-03.

\section{Preliminaries} \label{Pre}

Let $(\M, g)$ be a complete finite-volume Riemannian manifold of dimension $d\ge2$. 
We say that $\M$ has \textit{pinched negative curvature} if 
\begin{equation*}
-b^{2}\le \mathrm{sec}_{\M} \le -a^2
\end{equation*}
for some positive numbers $b>a>0$.

We assume that $\M$ has pinched negative curvature and $|\nabla \sec_{\M}| \le c$ for some $c>0$.
Let $\MM\to\M$ be the universal cover with the group of deck transformation $\Gamma$ acting isometrically on $\MM$. We also denote the lift of the metric on $\M$ to $\MM$ by $g$.
Let $\dist_{}$ be the Riemannian distance of $\MM$ and $\vol:=\vol_{\MM}$ the Riemannian volume on $\MM$.

A number of examples can be constructed from noncompact finite-volume hyperbolic manifolds by perturbing the metric near cusps.
See \cite{DPPS_2009}, \cite{DPPS_2017} for the detail.

\subsection{Geometry of pinched negative curvature}
Since $\MM$ has pinched negative curvature, 
the metric space
$(\MM, \dist_{})$ 
is a CAT(0)-space. Hence we consider its boundary at infinity $\partial\MM$, also called the \textit{visual boundary}.
Fix $x\in\MM$. 
A sequence $(z_{n})$ in $\MM$ converges to a point $\xi$ in $\partial\MM$ 
if and only if
$z_{n}\to\infty$ 
and 
the sequence of normalized distance functions
\begin{equation*}
f_{n}(y)=\bb(y, x, z_{n}):=\dist_{}(y, z_{n})-\dist_{}(x, z_{n})
\end{equation*}
converges uniformly on compact sets in $\C(\MM)$. 
We denote the limit function by 
$\bb(y, x, \xi)$, which we call the \textit{Busemann function} based at $\xi$.
The convergence of $z_{n}$ to $\xi$ is independent of the choice of $x$.
An important remark is that $f_{n}$ converges to the Busemann function $\C^{2}$-uniformly on compact sets:

\begin{prop} \label{CCB}
(\cite{Bal_1995}) 
Let $f_{n}(y)= \dist_{}(y, z_{n})-\dist_{}(x, z_{n})$ and $z_{n}\to \xi \in \partial\MM$. Then
\begin{align*}
\nabla f_{n} 
&\to 
\nabla \bb(\cdot, x, \xi),
\\
\nabla_{\rv}\nabla f_{n} 
&\to 
\nabla_{\rv} \nabla \bb(\cdot, x, \xi),
\end{align*}
uniformly on compact sets. 
$\nabla \bb(\cdot, x, \xi)$ means the covariance derivative of $y\mapsto\bb(y,x,\xi)$.
\end{prop}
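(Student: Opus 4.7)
The plan is to express the gradient and Hessian of $f_n(y)=\dist(y,z_n)-\dist(x,z_n)$ in terms of data along the geodesic from $y$ to $z_n$, and then to pass to the limit $z_n\to\xi$ via standard Jacobi-field and Riccati arguments in pinched negative curvature.

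For the $C^1$-statement, I would observe that $\nabla f_n(y)=-\dot\gamma_n(0)$, where $\gamma_n$ is the unit-speed geodesic from $y$ to $z_n$; subtracting the constant $\dist(x,z_n)$ does not affect gradients. Since $\MM$ is $\mathrm{CAT}(-a^2)$, the map that sends a pair (base point, endpoint in $\MM\cup\partial\MM$) to the initial velocity of the connecting ray is continuous, so $\dot\gamma_n(0)\to\dot\gamma_\infty(0)$ uniformly in $y$ on compact sets, where $\gamma_\infty$ is the ray from $y$ to $\xi$. By the very definition of the Busemann function, $\nabla\bb(\cdot,x,\xi)(y)=-\dot\gamma_\infty(0)$, which gives the first assertion.

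For the $C^2$-statement, I split an arbitrary tangent vector $\rv$ at $y$ into its $\dot\gamma_n(0)$-parallel and $\dot\gamma_n(0)$-orthogonal parts; since $|\nabla f_n|\equiv 1$, the parallel component contributes $0$ to $\nabla_\rv\nabla f_n$, so I reduce to $\rv\perp\dot\gamma_n(0)$. Then $\nabla_\rv\nabla f_n(y)$ is read off from the symmetric endomorphism $U_n(0)$ of $\dot\gamma_n(0)^\perp$ (the shape operator of $S(z_n,T_n)$ at $y$ with $T_n:=\dist(y,z_n)$), which satisfies the matrix Riccati equation
\begin{equation*}
U_n'+U_n^2+R_{\dot\gamma_n}=0
\end{equation*}
on $[0,T_n]$ with terminal condition $U_n(T_n)=+\infty$. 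The pinched bounds $-b^2\le\sec\le-a^2$ ensure that along $\gamma_\infty$ there exists a unique stable Riccati solution $U_\infty$, coinciding with the shape operator of the horosphere through $y$ centered at $\xi$, i.e.\ with the Hessian of $\bb(\cdot,x,\xi)$ at $y$; a comparison with constant-curvature Riccati gives $\|U_n(0)-U_\infty(0)\|\le Ce^{-2aT_n}$ where $C$ depends only on the pinching constants.

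The main obstacle is upgrading this pointwise convergence to uniform convergence in $y$ over a compact set $K$. This is where the standing hypothesis $|\nabla\sec_{\M}|\le c$ enters: together with the pinching it furnishes a $C^1$-estimate on the curvature tensor $R_{\dot\gamma_n}$ along varying geodesics, hence continuous dependence of the Riccati solutions on the initial data $(y,\dot\gamma_n(0))$. Combining this with the $C^1$-convergence $\dot\gamma_n(0)\to\dot\gamma_\infty(0)$ from the first step and the uniform exponential contraction rate from the pinched bounds, I would conclude that $U_n(0)\to U_\infty(0)$ uniformly in $y\in K$, which gives the desired uniform convergence of $\nabla_\rv\nabla f_n$ to $\nabla_\rv\nabla\bb(\cdot,x,\xi)$.
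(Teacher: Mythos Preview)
The paper does not prove this proposition; it is quoted from Ballmann's book \cite{Bal_1995} (the underlying estimates go back to Heintze--Im Hof). Your Riccati/Jacobi-field outline is exactly the standard route and is essentially what one finds in that reference, so the approach is correct.

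One correction, however: your appeal to the hypothesis $|\nabla\sec_{\M}|\le c$ for the uniform $C^2$-convergence is misplaced. Proposition~\ref{CCB} holds under the pinching $-b^2\le\sec\le -a^2$ alone. The uniformity over $y\in K$ follows because (i) the Riccati comparison gives $\|U_n(0)-S_n(0)\|\le C(a,b)\,e^{-2aT_n}$ where $S_n$ is the stable Riccati solution along $\gamma_n$, with a constant depending only on the pinching; (ii) $T_n=\dist(y,z_n)\to\infty$ uniformly for $y\in K$; and (iii) the stable solution $(y,\rv)\mapsto S(y,\rv)$ is continuous on $\T^{1}\MM$, being the uniform limit (with the same rate $e^{-2ar}$) of the smooth shape operators of spheres $S(\gamma_{\rv}(r),r)$, so that $S_n(0)\to S_\infty(0)$ uniformly on $K$ by your $C^1$-step. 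No control on $\nabla\sec$ is needed anywhere in this chain. In the paper the bound on $|\nabla\sec_{\M}|$ is used for other purposes (H\"older regularity of the stable and strong unstable foliations and of the potentials $\Delta_s B$, $\J$), not for Proposition~\ref{CCB}.
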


Let $\Delta= \mathrm{div} \nabla$ be the Laplace-Beltrami operator on 
$(\MM,g)$.
If $\{e_{1}, \dots, e_{d}\}$ is an orthonormal frame on an open set $U$, 
for each $\C^{2}$-function $f$ on $U$,
\begin{equation}\label{Localex}
\Delta f = \sum_{j=1}^{d}\langle e_{j}, \nabla_{e_{j}}\nabla f\rangle_{g}
\end{equation} 
on $U$. Applying Proposition \ref{CCB} to each summand of (\ref{Localex}), we obtain the following result.

\begin{prop} \label{C2conv}
Let $f_{n}(y)= \dist_{}(y, z_{n})-\dist_{}(x, z_{n})$ and $z_{n}\to \xi \in \partial\MM$. Then
$\Delta f_{n}$ converges to $\Delta \bb(\cdot, x, \xi)$ uniformly on compact sets.
\end{prop}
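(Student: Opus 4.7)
The plan is to reduce the statement to a purely local assertion and then apply Proposition~\ref{CCB} coordinate-wise in a smooth orthonormal frame. Fix a compact set $K \subset \MM$. Because $K$ is compact and $\MM$ is a smooth Riemannian manifold, I can cover $K$ by finitely many open sets $U_{1},\dots,U_{N}$, each contained in a slightly larger relatively compact open set $V_{i}$ on which a smooth orthonormal frame $\{e_{1}^{(i)},\dots,e_{d}^{(i)}\}$ exists (e.g.\ by Gram--Schmidt on a coordinate frame). It is enough to prove uniform convergence $\Delta f_{n} \to \Delta \bb(\cdot,x,\xi)$ on each $\overline{U_{i}}$, since finitely many such uniform bounds combine to give the uniform bound on $K$.

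Fix one such $i$ and drop the superscript. On $V_{i}$, formula (\ref{Localex}) gives
\begin{equation*}
\Delta f_{n}(y) - \Delta \bb(y,x,\xi)
=
\sum_{j=1}^{d}
\Bigl\langle e_{j}(y),\, \nabla_{e_{j}}\nabla f_{n}(y) - \nabla_{e_{j}}\nabla \bb(\cdot,x,\xi)(y)\Bigr\rangle_{g}.
\end{equation*}
Applying Proposition~\ref{CCB} with the vector field $\rv = e_{j}$ on the compact set $\overline{V_{i}}$, each summand $\nabla_{e_{j}}\nabla f_{n}$ converges uniformly to $\nabla_{e_{j}}\nabla \bb(\cdot,x,\xi)$ on $\overline{V_{i}}$. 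Since each $e_{j}$ is continuous on $\overline{V_{i}}$, hence bounded in norm there, the Cauchy--Schwarz inequality
\begin{equation*}
\bigl|\langle e_{j}, \nabla_{e_{j}}\nabla f_{n} - \nabla_{e_{j}}\nabla \bb(\cdot,x,\xi)\rangle_{g}\bigr|
\le
\|e_{j}\|_{g}\,\bigl\|\nabla_{e_{j}}\nabla f_{n} - \nabla_{e_{j}}\nabla \bb(\cdot,x,\xi)\bigr\|_{g}
\end{equation*}
forces uniform convergence of each summand to zero on $\overline{U_{i}}\subset V_{i}$, and summing over $j=1,\dots,d$ yields uniform convergence of $\Delta f_{n}$ to $\Delta \bb(\cdot,x,\xi)$ on $\overline{U_{i}}$. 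Taking the maximum over the finite cover finishes the proof.

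There is no real obstacle beyond bookkeeping: the genuine analytic content, namely that $\nabla_{\rv}\nabla f_{n}$ converges uniformly on compact sets, is already supplied by Proposition~\ref{CCB}, which itself rests on the $\C^{2}$-regularity of the Busemann function in pinched negative curvature. The only point to be careful about is that $\MM$ need not carry a global orthonormal frame, which is precisely why one localizes via a finite cover; once this is done, the Laplacian is a sum of finitely many inner products against the already controlled Hessians, and uniform convergence transfers term by term.
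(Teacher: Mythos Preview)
Your proof is correct and follows exactly the approach the paper indicates: the paper simply says ``Applying Proposition~\ref{CCB} to each summand of (\ref{Localex}), we obtain the following result,'' and you have spelled out the details of that sentence---localizing to a finite cover with orthonormal frames, applying Proposition~\ref{CCB} termwise, and passing the uniform convergence through the inner product via Cauchy--Schwarz. There is no substantive difference in strategy.
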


The visual boundary $\partial\MM$ is equipped with a distance.
For $\xi, \eta \in\partial \MM$ with $z_{n}, w_{n}\in\MM$ which converge to $\xi, \eta$ respectively, we define the Gromov product of $\xi$ and $\eta$ at $x\in \MM$ by
\begin{equation*}
(\xi|\eta)_{x}:=\lim_{n\to\infty} \dist(x, z_{n}) + \dist(x, w_{n}) -\dist(z_{n}, w_{n}).
\end{equation*}
Then for $\tau>0$ small enough, $d_{\infty}^{x,\tau}(\xi, \eta):=\exp[-\tau(\xi|\eta)_{x}]$ is a distance function on the visual boundary $\partial\MM$ (see \cite{BriHae}).

Let $\pi:\T\MM\to\MM$ be the tangent bundle of $\MM$. 
We endow $\T\MM$ with a Riemannian metric $\gs$ called the \textit{Sasaki metric}, induced by the Riemannian structure $g$ of $\MM$ and its Levi-Civita connection $\nabla$.
We consider the unit tangent bundle
$\T^{1}\MM=\{ \rv\in \T\MM: \|\mathrm{v}\|^{2}=\langle \mathrm{v}, \mathrm{v}\rangle_{g}=1\}$
of $\MM$, which is a submanifold of $\T\MM$ and also a sphere bundle of $\MM$.
We denote the geodesic flow on $\T^{1}\MM$ by
$\g^{t}:\T^{1}\MM \to \T^{1}\MM$. 
We also denote by $\g^{t}$ the geodesic flow on the unit tangent bundle $\T^{1}\M$ of $\M$.

We introduce the stable foliation $\widetilde\W^{s}$ and the strong unstable foliation $\widetilde\W^{su}$ of $\T^{1}\MM$ which will play an important role in the following sections. Their leaves are defined by
\begin{align*}
\widetilde\W^{s}(\rv)
&=
\left\{
\rw\in\T^{1}\MM: 
\lim_{t \to \infty}
\dist_{}(\gamma_{\rv}(t+s), \gamma_{\rw}(t))
=0,
\, 
\exists s
\right\},
\\
\widetilde\W^{su}(\rv)
&=
\left\{
\rw\in\T^{1}\MM:
\lim_{t\to\infty}
\dist_{}(\gamma_{\rv}(t), \gamma_{\rw}(t))
=0
\right\},
\end{align*}
Where $\gamma_{\rv}$ is the geodesic generated by $\rv$.
Note that $\widetilde\W^{su}$ consists of unit normal bundles of level sets of Busemann functions and leaves are transversal to the stable foliation with angle uniformly bounded away from zero (Lemma 7.4. in \cite{PPS}).

The \textit{stable distribution} $\widetilde{E}^{s}$ of $\T^{1}\MM$ is a rank $d$-subbudle of the tangent bundle $\T\T^{1}\MM \to \T^{1}\MM$ of $\T^{1}\MM$ whose fibers are tangent spaces of stable leaves: 
$\widetilde{E}^{s}_{\rv}:= \T_{\rv}\widetilde\W^{s}(\rv)$. 
Since $\widetilde\W^{s}(\rv)$ is diffeomorphic to $\MM$ via $\pi:\T^{1}\MM\to\MM$ for $\rv \in \T^{1}\MM$, 
we endow stable leaves of $\widetilde\W^{s}$ with a metric $g_{s}$ induced from the metric $g$ on $\MM$: 
for $\rv\in\T^{1}_{x}\MM$, define $g_{s}$ on $\widetilde{E}^{s}_{\rv}=\T_{\rv}\widetilde\W^{s}(\rv)$ from $g$ on $\T_{x}\MM$.

For each point $x\in\MM$ and point at infinity $\xi\in\partial\MM$,
there is a unique unit vector $\rv$ in $\T^{1}\MM$ such that
$\gamma_{\mathrm{v}}(t)$ converges to $\xi$ at $t\to \infty$.
Conversely, for every geodesic $\gamma$, $\gamma(t)$ converges to a point $\xi$ in $\partial\MM$.
We denote the limit point $\xi$ of $\gamma_{\rv}(t)$ by $\rv_{+}$.
This gives a useful identification of $\T^{1}\MM$ with $\MM\times\partial\MM$.
With such identification, we have that for $\rv=(x,\xi)$, $\widetilde\W^{s}(\rv)=\MM\times\{\xi\}$.
Moreover, 
$\nabla_{y} \bb(y, x, \xi)= (y, \xi)$.

Let $X: \T^{1}\MM \to \widetilde{E}^{s}$ be a section of the stable distribution which is leafwise $\C^{1}$, 
i.e., the restriction $X|_{\widetilde\W^{s}(x, \xi)}$ is $\C^{1}$ on $\widetilde\W^{s}(x, \xi)$ for each $(x, \xi)\in \T^{1}\MM$.
We identify $X|_{\widetilde\W^{s}(x, \xi)}$ with a $\C^{1}$-vector field $X^{\xi}$ on $\MM$ for each $\xi$.
We define the $g_{s}$-divergence $\mathrm{div}_{s}$ by
\begin{equation*}
\mathrm{div}_{s}X(x, \xi)= \mathrm{div} X^{\xi}(x).
\end{equation*}
Let $u \in \C(\T^{1}\MM)$ be a leafwise $\C^{2}$-function; $u|_{\widetilde\W^{s}(\rv)}$ is $\C^{2}$ on $\widetilde\W^{s}(\rv)$. 
Thus for each $\xi\in\partial\MM$, $u^{\xi}(x):= u(x, \xi)$ is $\C^{2}$ on $\MM$.
We define the \textit{foliated Laplacian} $\Delta_{s}$ by 
\begin{equation*}
\Delta_{s} u= \mathrm{div}_{s} \nabla u,
\end{equation*}
where $\nabla u (x, \xi) := \nabla u^{\xi} (x)$.

\subsection{Brownian motions}
The heat kernel 
$\wp:(0,\infty)\times\MM\times\MM\to(0,\infty)$ 
is the fundamental solution of the heat equation: 
\begin{align*}
\partial_{t}\wp(t,x,y)
&=
\Delta_{y}\wp(t, x,y),\\
\lim_{t \downarrow 0} \wp(t, x, y) 
&= 
\delta_{x}(y).
\end{align*}
The limit in the last equation means that for each $f\in\C_{b}(\MM)$,
\begin{equation*}
\lim_{t\downarrow0}
\int_{\MM}
\wp(t, x, y)f(y)
d\vol_{\MM}(y)
=f(x).
\end{equation*}
Since the curvature of $\MM$ is negatively pinched, $\Delta$ is (weakly) coercive, i.e., 
the Green function of $\Delta$
$$\G(x,y):=\int_{0}^{\infty}\wp(t, x, y)dt$$ 
is finite for $x \ne y\in \MM$.

For  $\kappa<0$, if $\wp_{\mathbb{H}^{d}(\kappa)}(t, x,y)$ is the heat kernel on the $d$-dimensional hyperbolic space $\mathbb{H}^{d}(\kappa)$ of constant curvature $\kappa$, $\wp_{\mathbb{H}^d(\kappa)} (t, x, y)$ depends only on $t$ and $\dist_{\mathbb{H}^{d}(\kappa)}(x,y)$. 
The following comparison theorem of the heat kernel is also due to the pinched negative curvature.

\begin{prop} \label{HCom}
(Heat kernel comparison theorem, \cite{Hsu}) 
\begin{align*}
\wp_{\mathbb{H}^{d}(-b^{2})}(t, \dist(x,y))
\le
\wp(t,x,y)
\le
\wp_{\mathbb{H}^{d}(-a^{2})}(t, \dist(x,y)).
\end{align*}
\end{prop}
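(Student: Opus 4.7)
The plan is to compare $\wp(t,x,\cdot)$ to the two model radial heat kernels by a parabolic maximum principle on $\MM$, with the Laplacian comparison theorem supplying the sign of the relevant error term. Fix $x\in\MM$, write $r(y):=\dist(x,y)$, and let $\wp_{\kappa}(t,r)$ denote the radial heat kernel on $\mathbb{H}^{d}(\kappa)$. Lift it to $\MM$ via $u_{\kappa}(t,y):=\wp_{\kappa}(t,r(y))$. Since $\wp_{\kappa}$ satisfies the radial heat equation on the model and $\Delta_{y}u_{\kappa}=\partial_{r}^{2}\wp_{\kappa}+(\Delta r)\,\partial_{r}\wp_{\kappa}$ wherever $r$ is smooth, one obtains the exact identity
\begin{equation*}
(\partial_{t}-\Delta)\,u_{\kappa}(t,y)=\bigl[(d-1)\sqrt{-\kappa}\coth(\sqrt{-\kappa}\,r(y))-\Delta r(y)\bigr]\,\partial_{r}\wp_{\kappa}(t,r(y)).
\end{equation*}

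Two classical inputs fix the sign of the right-hand side. First, the hyperbolic heat kernel is strictly decreasing in $r$, so $\partial_{r}\wp_{\kappa}<0$. Second, outside the cut locus of $x$, Hessian comparison (with the upper sectional bound $-a^{2}$) and mean curvature comparison (with the induced Ricci bound $\mathrm{Ric}_{\M}\ge-(d-1)b^{2}$) together give
\begin{equation*}
(d-1)a\coth(a\,r)\le\Delta r\le(d-1)b\coth(b\,r).
\end{equation*}
Choosing $\kappa=-a^{2}$ makes the bracket $\le0$, so $u_{-a^{2}}$ is a supersolution of the heat equation on $\MM$; choosing $\kappa=-b^{2}$ makes it $\ge0$, so $u_{-b^{2}}$ is a subsolution. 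Both lifted functions share the initial distribution of $\wp(\cdot,x,\cdot)$: tested against $f\in\C_{b}(\MM)$ they return $f(x)$ as $t\downarrow0$, because on small balls around $x$ the volume elements of $\MM$ and of either model space agree to leading Euclidean order. The weak parabolic maximum principle therefore forces
\begin{equation*}
u_{-b^{2}}(t,y)\le\wp(t,x,y)\le u_{-a^{2}}(t,y),
\end{equation*}
which is the stated inequality.

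The main obstacle is the cut locus of $x$, where $r$ is not smooth and $\Delta r$ acquires a singular contribution; the non-compactness of $\MM$ is a secondary issue. The cut-locus problem is handled by Calabi's barrier trick: near any cut point $y_{0}$, replace $r$ locally by the distance from a point slightly inside along the minimizing geodesic from $x$, which is smooth in a neighborhood of $y_{0}$ and whose Laplacian differs from $\Delta r$ in the favorable direction, so that $u_{\kappa}$ retains its classical super/subsolution status on a family of open sets exhausting $\MM\setminus\{x\}$. Non-compactness is dealt with by running the maximum principle on the Dirichlet heat kernels of geodesic balls $B(x,R)$ and passing to the limit $R\to\infty$; boundary contributions vanish in the limit thanks to the stochastic completeness of $\MM$ in pinched negative curvature and the Gaussian off-diagonal decay of $\wp_{\kappa}$. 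With both fixes in place the two inequalities emerge simultaneously from a single comparison argument.
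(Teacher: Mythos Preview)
The paper does not prove this proposition; it is stated with a citation to \cite{Hsu} and no argument is given. Your sketch is essentially the standard proof one finds in that reference (via Laplacian comparison and the parabolic maximum principle), so there is nothing to compare against.

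One remark on your write-up: the cut-locus discussion is superfluous here. The manifold $\MM$ is simply connected with sectional curvature bounded above by $-a^{2}<0$, hence is a Cartan--Hadamard manifold, and $\exp_{x}$ is a global diffeomorphism. Thus $r(y)=\dist(x,y)$ is smooth on all of $\MM\setminus\{x\}$ and Calabi's trick is not needed. Apart from that, the argument is correct: the sign analysis of $(\partial_{t}-\Delta)u_{\kappa}$ is right, the initial-data matching is handled as you describe, and the exhaustion by Dirichlet problems on geodesic balls together with stochastic completeness (which holds under the Ricci lower bound implied by $\sec_{\MM}\ge -b^{2}$) legitimizes the global comparison.
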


Note that $\wp(t,x,y)$ determines a unique family of probability measures on the space 
$\widetilde\Omega= \C(\mathbb{R}_{+}, \MM)$ of sample paths.
For each $x\in\MM$, 
we define the probability measure 
$\mathbb{P}_{x}$ 
on the cylinder sets in 
$\widetilde\Omega$
by
\begin{align*}
&\mathbb{P}_{x}
\left[ 
\widetilde\omega_{t_{i}}\in A_{i}, t_{1}<\cdots<t_{k} 
\right]
=
\\&
\int_{A_{k}}
\cdots
\int_{A_{1}} 
\wp(t_{1}, x, y_{1})
\wp(t_{2}-t_{1}, y_{1}, y_{2})
\times
\cdots 
\times
\wp({t_{k}-t_{k-1}}, y_{k-1}, y_{k}) 
d \vol(y_{1})
\cdots
d \vol(y_{k}).
\end{align*}
By Kolmogorov extension theorem, $\mathbb{P}_{x}$ extends to a unique probability measure on $\widetilde\Omega$. 
For $s\ge0$, we denote the projection map $\widetilde\omega \mapsto \widetilde\omega_{s}$ by $\pi_{s}:\widetilde\Omega\to\MM$.
Let $\Fi_{t}=\Fi_{t}(\MM):=\sigma\{\pi_{s}\}_{0\le s \le t}$ be the smallest $\sigma$-algebra for which the projections $\pi_{s}$ are measurable.
The canonical process $\widetilde{Z}_{t}(\widetilde\omega):= \widetilde\omega_{t}$ of the filtered space $( \widetilde\Omega, \{ \Fi_{t} \}_{0\le t\le\infty} )$ forms a Markov process with respect to $\mathbb{P}_{x}$, 
which is called the \textit{Brownian motion} on $\MM$ with initial distribution $\delta_{x}$, 
for each $x\in\MM$.

Let $\Omega=\mathcal{C}(\mathbb{R}_{+},\M)$.
For each $x\in\M$ and its lift $\widetilde{x}\in\MM$,  
we also denote the push-forward measure of $\mathbb{P}_{\widetilde{x}}$ by $\mathbb{P}_{x}$.
Then the canonical process $Z_{t}$ of $(\Omega, (\Fi_{t}(\M))_{0\le t\le\infty}, (\mathbb{P}_{x})_{x\in\M})$ is a Markov process, 
which we call the Brownian motion on $\M$. 
This process is the projected process of the Brownian motion on $\MM$. 
The stationary measure of the Brownian motion is the probability measure which defines the Brownian motion with initial distribution $m$:
$\mathbb{P}_{m}=\int_{\M} \mathbb{P}_{x}\,d\m(x)$ 
where $\m$ is the normalized Riemannian volume on $\M$. 
The shift dynamical system on the path space $(\Omega, \mathscr{S}^{t}, \mathbb{P}_{m})$ is ergodic since $\M$ is connected, 
where $\mathscr{S}^{t}\omega_s =\omega_{t+s}$ for $\omega\in\Omega$.

Let $r(\omega, t)= \dist_{}(\widetilde\omega_{0}, \widetilde\omega_{t})$ where $\widetilde\omega$ is a lift of $\omega$. 
Then since $r$ is a sub-additive cocycle, that is, $r(\omega,{t+s})\le r(\omega,t)+r(\mathscr{S}^{t}\omega, s)$ for every $s,t>0$, 
there exists a positive constant $\ld$, which is called \textit{the linear drift} of the Brownian motion, such that for every $x\in\MM$ and for a.s. $\omega\in\Omega$
\begin{equation*}
\ld
=\lim_{t\to \infty}\frac{1}{t}r(\omega, t)
=\lim_{t\to\infty}\frac{1}{t}\dist_{}(x, \widetilde\omega_{t})
\end{equation*}
due to the subadditive ergodic theorem (\cite{Kingman_1968}). If $\M$ has constant negative curvature $-a^2$, then $\ld = (d-1)a$.

For a fixed $x\in\MM$, the exponential map at $x$ induces a polar coordinate on $\MM\setminus\{x\}$:
\begin{align*}
(0,\infty)\times \T^{1}_{x}\MM
&\to
\MM\setminus\{x\}\\
(r, \rv)
&\mapsto
\exp_{x} r\rv.
\end{align*}
Note that $\T^{1}_{x}\MM$ inherits the Riemannian metric $g_{\mathbb{S}}$ of the unit sphere $\mathbb{S}^{d-1}$ from $(\MM, g)$ and write $g$ as
\begin{equation*}
g= dr^{2}+ \lambda_{x}(r, \rv) g_{\mathbb{S}},
\end{equation*}
for some smooth function $\lambda_{x}$ on 
$\MM\setminus\{x\}=(0,\infty)\times\T^{1}_{x}\MM$.

For $\widetilde\omega\in \widetilde\Omega$, 
we write $r(\widetilde\omega, t)= \dist_{}(\widetilde\omega_{0}, \widetilde\omega_{t})$
and
let
$\theta(\widetilde\omega,{t})$
be the unit vector in 
$\T^{1}_{\widetilde\omega_{0}}\MM$ 
with
$\exp_{\widetilde\omega_{0}} 
\left[
r(\widetilde\omega,t)\theta(\widetilde\omega,t) 
\right] 
= \widetilde\omega_{t}$. 
\begin{prop} (\cite{Prat_1975}, \cite{Pin_1978})
For every $x\in\MM$ and $\mathbb{P}_{x}$-a.e. $\widetilde\omega$,
the limit $\lim_{t\to\infty} \theta(\widetilde\omega, t)$ exists.
\end{prop}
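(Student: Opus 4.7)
The plan is to express the Brownian motion in polar coordinates $(r, \theta) \in (0, \infty) \times \T^{1}_{x}\MM$ centered at $x = \widetilde\omega_{0}$ and to show that the angular process $\theta_{t}:=\theta(\widetilde\omega, t)$ is a continuous semi-martingale on the compact sphere $\T^{1}_{x}\MM$ whose drift and diffusion coefficients are suppressed by inverse powers of the warping function $\lambda_{x}(r,\theta)$. Since $\MM$ is Cartan--Hadamard, $\exp_{x}$ is a diffeomorphism, so this change of variables is global on $\MM\setminus\{x\}$ and the standard It\^o calculus in polar coordinates applies.

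The first step is It\^o's formula. Due to the warped-product form $g = dr^{2} + \lambda_{x}(r,\theta)\,g_{\mathbb{S}}$, the Brownian motion decomposes into a radial part $r_{t}$ satisfying an SDE of the form $dr_{t} = d\beta_{t} + \frac{1}{2}(\Delta r)(\widetilde\omega_{t})\,dt$ with $\beta_t$ a standard one-dimensional Brownian motion, and an angular part $\theta_{t}$ on $\T^{1}_{x}\MM$ whose quadratic variation accumulates at rate $O(\lambda_{x}(r_{t},\theta_{t})^{-1})$ and whose drift is of order $O(|\nabla_{\theta}\log \lambda_{x}|/\lambda_{x})$, the constants depending only on the dimension.

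The second step is Jacobi field comparison. The Rauch theorem together with $\sec_{\MM} \le -a^{2}$ yields $\lambda_{x}(r, \theta) \ge c\sinh^{2}(a r)$ for $r$ sufficiently large, while $\sec_{\MM} \ge -b^{2}$ combined with $|\nabla \sec_{\MM}|\le c$ furnishes matching polynomial-in-$e^{br}$ upper bounds on $|\nabla_{\theta}\log \lambda_{x}|$ via the Jacobi equation and its covariant derivative. Combined with the linear drift estimate $r_{t}/t \to \ld > 0$ from Kingman's subadditive ergodic theorem, $\mathbb{P}_{x}$-a.s.\ there is some random $T(\widetilde\omega) < \infty$ with $r_{t} \ge \ld t/2$ for all $t \ge T$, so that
\begin{equation*}
\int_{0}^{\infty} \frac{dt}{\lambda_{x}(r_{t}, \theta_{t})} < \infty
\qquad\text{and}\qquad
\int_{0}^{\infty} \frac{|\nabla_{\theta} \log \lambda_{x}(r_{t}, \theta_{t})|}{\lambda_{x}(r_{t}, \theta_{t})}\, dt < \infty.
\end{equation*}
The local martingale component of $\theta_{t}$ then has finite quadratic variation and converges a.s.\ by the standard convergence theorem for continuous local martingales, while the drift component has finite total variation and converges absolutely; hence $\lim_{t\to\infty}\theta_{t}$ exists in the compact sphere $\T^{1}_{x}\MM$.

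The main obstacle I expect is the careful bookkeeping of the angular coefficients: because $\lambda_{x}$ is only implicitly defined through the exponential map, the pointwise estimates of $|\nabla_{\theta}\log \lambda_{x}|$ in terms of curvature and its covariant derivative---needed to secure integrability of the angular drift---must be derived by going through the Jacobi equation and its differentiated version, and it is precisely here that the hypothesis $|\nabla \sec_{\MM}|\le c$ from the preliminaries enters essentially.
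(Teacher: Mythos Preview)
The paper does not supply a proof of this statement; it is quoted from \cite{Prat_1975} and \cite{Pin_1978} without argument. Your outline is exactly the Prat--Pinsky strategy (polar decomposition, It\^o's formula, Jacobi comparison, and a.s.\ convergence of a semimartingale with integrable quadratic variation and bounded-variation drift), so you are reproducing the cited argument rather than offering an alternative route.

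One point needs sharpening, and it is precisely the one you flag as the main obstacle. As phrased, your drift bound does not close: a ``polynomial in $e^{br}$'' upper estimate on $|\nabla_\theta\log\lambda_x|$, paired with the \emph{independent} lower bound $\lambda_x\ge c\sinh^2(ar)$, yields at best $|\nabla_\theta\log\lambda_x|/\lambda_x\le Ce^{(b-2a)r}$, which is integrable along $r_t\sim\ell t$ only under the extra pinching $b<2a$ that the paper does not assume. The remedy is to avoid decoupling numerator and denominator. In dimension two, with $\lambda=J^2$, the variation $J_\theta$ solves $(J_\theta)''+KJ_\theta=-K_\theta J$ with zero Cauchy data; the substitution $J_\theta=\alpha J$ gives $(\alpha'J^2)'=-K_\theta J^2$, and since $|K_\theta|\le cJ$ (here $|\nabla\sec|\le c$ enters) together with $J(s)\le J(r)e^{-a(r-s)}$ one obtains $|\alpha|\le CJ$, hence $|J_\theta|\le CJ^2$. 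The angular drift is then $\lesssim|J_\theta|/J^3\le C/J\le Ce^{-ar}$, integrable for \emph{any} $0<a<b$. In dimension $d\ge3$ the same mechanism works with the Jacobi tensor in place of $J$; note incidentally that the polar metric is $dr^2+g_r$ for a genuine tensor $g_r$ on $S^{d-1}$, not literally a conformal factor $\lambda\,g_{\mathbb S}$ (the paper's own notation is informal here), but this does not affect the argument.
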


Since $r(\widetilde\omega, t)\to \infty$ as $t\to\infty$ for $\mathbb{P}_{x}$-a.e. $\widetilde\omega$, the limit
$\widetilde\omega_{\infty}
:=
\lim_{t\to\infty} \widetilde\omega_{t}$
exists for $\mathbb{P}_{x}$-a.e. $\widetilde\omega$. 
In addition, the Brownian path roughly follows the geodesic $\gamma_{\theta(\widetilde\omega, \infty)}$ (\cite{Led_1988}):
\begin{equation}\label{roughpath}
\lim_{t\to\infty} 
\frac{1}{t}
\dist 
\left( 
\widetilde\omega_{t}, 
\exp_{x} \left[ r(\widetilde\omega, t)\theta(\widetilde\omega, \infty) \right] 
\right)
=0.
\end{equation}
We can replace $r(\widetilde\omega, t)$ by $\ld t$. 
We denote the asymptotic distribution of Brownian paths starting from $x$ by $\nu_{x}$, i.e., 
\begin{equation*}
\nu_{x}(U):= \mathbb{P}_{x}\left[ \widetilde\omega: \widetilde\omega_{\infty}\in U \right],
\textrm{ for }U\subset \partial\MM.
\end{equation*}
Since the family $(\mathbb{P}_{x})$ is $\Gamma$-equivariant, $(\nu_{x})_{x\in \MM}$ is also $\Gamma$-equivariant: 
$\gamma_{*}\nu_{x} =\nu_{\gamma x}$ for each $\gamma\in\Gamma$.
Moreover, $(\nu_{x})_{x\in\MM}$ is absolutely continuous and we denote the Radon-Nikodym derivative, called the \textit{Martin kernel}, by
\begin{equation*}
\kk(x, y, \xi):=\frac{d\nu_{y}}{d\nu_{x}}(\xi).
\end{equation*}

The Martin kernel is also characterized by the limiting behavior of the Green function.
\begin{prop} (\cite{AndSch_1985})
For each sequence $(z_{n})$ in $\MM$ with $z_{n} \to \xi \in \partial\MM$,
\begin{equation*}
\kk(x, y, \xi) = \lim_{n\to\infty} \frac{\G(y, z_{n})}{\G(x, z_{n})}.
\end{equation*}
\end{prop}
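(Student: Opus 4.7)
The plan is to deduce the formula from Martin boundary theory applied to $\MM$, combined with the Anderson--Schoen (and Ancona) identification of the Martin boundary of $\MM$ with the visual boundary $\partial\MM$ under pinched negative curvature.

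First, fix $\xi\in\partial\MM$ and a sequence $z_{n}\to\xi$ in $\MM$. The normalized Green function $K_{n}(y):=\G(y,z_{n})/\G(x,z_{n})$ is positive, harmonic on $\MM\setminus\{z_{n}\}$, with $K_{n}(x)=1$. The Harnack inequality gives uniform bounds and equicontinuity on compact sets, so every subsequence admits a sub-subsequence converging to a positive harmonic function on $\MM$ equal to $1$ at $x$. The key analytic input, due to Anderson--Schoen, is that in pinched negative curvature the limit is \emph{independent} of the approximating sequence $z_{n}\to\xi$; the Martin compactification coincides with the visual compactification, and the limit $K(y,\xi):=\lim_{n}K_{n}(y)$ is a well-defined minimal positive harmonic function of $y$.

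Next, I invoke the Martin representation theorem: every positive harmonic function $h$ on $\MM$ admits a unique Borel measure $\mu_{h}^{x}$ on $\partial\MM$ with
\begin{equation*}
h(y)=\int_{\partial\MM}K(y,\eta)\,d\mu_{h}^{x}(\eta),\qquad h(x)=\mu_{h}^{x}(\partial\MM).
\end{equation*}
The probabilistic solution of the Dirichlet problem $H_{f}(y)=\int f\,d\nu_{y}$ for $f\in\C(\partial\MM)$ together with the Fatou--Brelot theorem on radial boundary values of bounded harmonic functions then identifies the representing measure of the constant function $\mathbf{1}$ at base point $x$ with $\nu_{x}$: that is, $d\mu_{\mathbf{1}}^{x}=d\nu_{x}$.

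A change of base point now concludes the argument. From the base point $y$, the Martin kernel is $K_{y}(w,\eta)=K(w,\eta)/K(y,\eta)$ and the representing measure of $\mathbf{1}$ at $y$ is $\nu_{y}$. Writing $\mathbf{1}$ in both representations gives
\begin{equation*}
1=\int_{\partial\MM}K(w,\eta)\,d\nu_{x}(\eta)=\int_{\partial\MM}\frac{K(w,\eta)}{K(y,\eta)}\,d\nu_{y}(\eta)\qquad\text{for every }w\in\MM,
\end{equation*}
and uniqueness of the Martin representation then yields $d\nu_{y}(\eta)=K(y,\eta)\,d\nu_{x}(\eta)$. Hence $\kk(x,y,\xi)=K(y,\xi)=\lim_{n}\G(y,z_{n})/\G(x,z_{n})$. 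The principal obstacle is the first step: existence and sequence-independence of the Martin kernel. This rests on Anderson--Schoen's uniform Harnack inequalities along geodesics and the boundary Harnack principle at infinity, estimates that genuinely exploit the pinched negative curvature and bounded geometry of $\MM$; this deep input is not reproved here but quoted from \cite{AndSch_1985}.
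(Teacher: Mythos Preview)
The paper gives no proof of this proposition at all; it simply cites Anderson--Schoen \cite{AndSch_1985} and moves on. Your sketch therefore goes well beyond what the paper does, and it is a correct outline of the standard argument: the Anderson--Schoen (and Ancona) boundary Harnack principle identifies the Martin compactification with the visual one, so the limit $K(y,\xi)=\lim_{n}\G(y,z_{n})/\G(x,z_{n})$ exists and is minimal harmonic; then the Martin representation of bounded harmonic functions, together with the identification of the exit distribution $\nu_{y}$ of Brownian motion with the representing measure of the constant $\mathbf{1}$, yields $d\nu_{y}=K(y,\cdot)\,d\nu_{x}$ and hence $\kk(x,y,\xi)=K(y,\xi)$. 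The one step that deserves a word more of justification is precisely that identification $\mu_{\mathbf{1}}^{x}=\nu_{x}$: it uses that every point of the Martin boundary is minimal here, so the Martin and Poisson boundaries coincide, and then the martingale $K(\widetilde\omega_{t},\xi)/K(\widetilde\omega_{0},\xi)$ (or equivalently Fatou--type convergence of bounded harmonic functions along Brownian paths) pins down the exit law as the representing measure. With that caveat, your proposal is sound and is exactly the content encapsulated by the citation.
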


We introduce another invariant of the Brownian motion called the \textit{stochastic entropy} of the Brownian motion denoted by $\hs$. 
The stochastic entropy was first introduced by V. Kaimanovich in \cite{Kai_1986}  for co-compact manifolds with negative curvature. 
The stochastic entropy determines whether the Poisson boundary is trivial or not.
The argument in \cite{Led} easily extends to manifolds with finite volume.
\begin{prop} 
For each $x\in\MM$, $\mathbb{P}_{x}$-a.e. $\widetilde\omega$, the following limits exist and coincide:
\begin{align*}
\hs
&= \lim_{t\to\infty} -\frac{1}{t}\log \wp(t,x,\widetilde\omega_{t})
\\
&= \lim_{t\to\infty} -\frac{1}{t} \log \G(x,\widetilde\omega_{t})
.
\end{align*}
\end{prop}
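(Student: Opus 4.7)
The plan is to follow the Kaimanovich--Ledrappier strategy for the co-compact setting \cite{Kai_1986, Led}, replacing the use of compactness by the ergodicity of the shift $(\Omega, \mathscr{S}^{t}, \mathbb{P}_{\m})$ on the finite-volume quotient $\M$ and by the pointwise heat kernel comparison in Proposition \ref{HCom}. I would work with the $\Gamma$-invariant functional
\begin{equation*}
F_{t}(\widetilde\omega) := -\log \wp(t, \widetilde\omega_{0}, \widetilde\omega_{t})
\end{equation*}
on the path space $\widetilde\Omega$, which descends to $\Omega$ and is $\mathbb{P}_{\m}$-integrable by virtue of the two-sided Gaussian bounds in Proposition \ref{HCom} and finiteness of $\vol(\M)$.

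For the existence of $\hs := \lim_{t\to\infty} F_{t}/t$ almost surely, the Chapman-Kolmogorov relation and convexity of $-\log$ first give subadditivity of $t \mapsto \mathbb{E}_{\m}[F_{t}]$, so the mean rate $\lim_{t}\mathbb{E}_{\m}[F_{t}]/t$ exists. Almost sure convergence under $\mathbb{P}_{\m}$ then follows from a Shannon--McMillan--Breiman type argument, for instance by decomposing the increments of $F_{t}$ into a martingale difference and an additive term whose time averages converge by Birkhoff's ergodic theorem. Since the limiting event is tail-measurable, the Markov property applied at a fixed positive time promotes $\mathbb{P}_{\m}$-a.s. convergence to $\mathbb{P}_{x}$-a.s. convergence for every $x \in \MM$.

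For the equality with the Green-function limit, I would sandwich $\G(x, \widetilde\omega_{t})$ between heat kernel values. A parabolic Harnack inequality in time gives the lower bound
\begin{equation*}
\G(x, y) \ge \int_{t-1}^{t} \wp(s, x, y) \, ds \gtrsim \wp(t, x, y),
\end{equation*}
so $\limsup_{t} -\tfrac{1}{t} \log \G(x, \widetilde\omega_{t}) \le \hs$ already by the first part. For the matching upper bound on $\G$, I would split $\G(x, \widetilde\omega_{t}) = \int_{0}^{\infty} \wp(s, x, \widetilde\omega_{t})\,ds$ into ranges of small $s$, $s \asymp t$, and large $s$, and use Proposition \ref{HCom}. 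The Gaussian tail $\wp(s, x, y) \lesssim s^{-d/2} e^{-\dist(x,y)^{2}/(5s)}$ for small $s$ and the spectral tail $\wp(s, x, y) \lesssim e^{-\lambda_{0} s}$ for large $s$, combined with the linear drift $\dist(x, \widetilde\omega_{t}) \sim \ld\,t$, show that the integrand is essentially concentrated on a window of polynomial width around a time comparable to $t$, and its maximum differs from $\wp(t, x, \widetilde\omega_{t})$ only by a subexponential factor. This yields $\liminf_{t} -\tfrac{1}{t}\log \G(x, \widetilde\omega_{t}) \ge \hs$ and closes the equality.

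The principal difficulty is the lack of compactness in the cusps of $\M$, where Harnack constants and local heat kernel asymptotics degenerate. Proposition \ref{HCom} is uniform on $\MM$ and supplies the Gaussian and spectral bounds needed throughout, but the time-Harnack step, the integrability of the martingale compensator in the decomposition of $F_{t}$, and the propagation of the Gaussian estimate along a Brownian path that may wander into cusps must each be verified using the controlled cusp geometry of pinched negatively curved manifolds of finite volume rather than imported directly from the co-compact literature.
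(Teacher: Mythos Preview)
The paper does not give its own proof of this proposition: it simply remarks that ``the argument in \cite{Led} easily extends to manifolds with finite volume'' and moves on. Your proposal is precisely to carry out that extension along the Kaimanovich--Ledrappier lines, so in spirit it coincides with what the paper claims. The ingredients you isolate---integrability of $F_{t}$ via the two-sided comparison of Proposition \ref{HCom}, subadditivity of $t\mapsto\mathbb{E}_{\m}[F_{t}]$ from Chapman--Kolmogorov, a Shannon--McMillan--Breiman/martingale step for the a.s.\ limit, the tail/Markov upgrade from $\mathbb{P}_{\m}$ to every $\mathbb{P}_{x}$, and the Laplace-type splitting of $\G(x,\widetilde\omega_{t})=\int_{0}^{\infty}\wp(s,x,\widetilde\omega_{t})\,ds$ using the Gaussian and spectral tails together with $\dist(x,\widetilde\omega_{t})\sim\ld t$---are the standard ones and are adequate in this setting.

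One point worth tightening: in the Green-function upper bound you want the dominant contribution of $\int_{0}^{\infty}\wp(s,x,\widetilde\omega_{t})\,ds$ to be compared to $\wp(t,x,\widetilde\omega_{t})$ up to a subexponential factor. The maximizing $s$ is not $t$ but a time $s_{*}\asymp\dist(x,\widetilde\omega_{t})$, so you need either a time-Harnack step or, more directly, the explicit hyperbolic kernels from Proposition \ref{HCom} to show that $\wp(s_{*},x,\widetilde\omega_{t})/\wp(t,x,\widetilde\omega_{t})$ is subexponential in $t$ once $\dist(x,\widetilde\omega_{t})\sim\ld t$. This is routine but should be stated, since your ``time-Harnack'' remark is otherwise only invoked for the lower bound. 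With that clarification your sketch matches the argument the paper defers to.
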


Note that $\hs = (d-1)^{2}a^{2}$ when $\textrm{sec}_{\M}=-a^{2}$.
There is another characterization of the stochastic entropy analogous to the definition of the topological entropy as the exponential growth of dynamically separated sets (see \cite{Kai_1986}, \cite{Led}).

\begin{prop}
For $x\in\MM$, $T>0$ and $0<\delta<1$, 
\begin{equation*}
\hs=\lim_{T\to\infty}\frac{1}{T}\log N(x, T, \delta),
\end{equation*} 
where 
$N(x, T, \delta):=\inf \left\{ \card(E): \mathbb{P}_{x}[ d(\widetilde\omega _T, E)\le 1] \ge \delta\right\}$.
\end{prop}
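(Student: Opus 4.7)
The plan is to combine the almost-sure convergence $-T^{-1}\log\wp(T,x,\widetilde\omega_T)\to\hs$ from the preceding proposition with the bounded-geometry estimates available on $\MM$ (uniform upper and lower bounds on volumes of unit balls from pinched curvature, plus a heat-kernel Harnack/Li--Yau type inequality) in order to translate density statements for the endpoint distribution of $\widetilde\omega_T$ into counting statements for $N(x,T,\delta)$. The two inequalities $\limsup_T T^{-1}\log N(x,T,\delta)\le \hs$ and $\liminf_T T^{-1}\log N(x,T,\delta)\ge \hs$ are handled by dual arguments centered on the typical density $e^{-\hs T}$.

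For the upper bound, fix $\epsilon>0$ and set $A_T:=\{y\in\MM:\wp(T,x,y)\ge e^{-(\hs+\epsilon)T}\}$. The almost-sure convergence gives $\mathbb{P}_x[\widetilde\omega_T\in A_T]\to 1$, so this probability exceeds $\delta$ for $T$ large. The inequality $\int_{A_T}\wp(T,x,y)\,d\vol(y)\le 1$ together with the defining lower bound on the density gives $\vol(A_T)\le e^{(\hs+\epsilon)T}$. Let $E$ be a maximal $1$-separated subset of $A_T$: the unit balls $B(y,1)$, $y\in E$, then cover $A_T$, hence $\mathbb{P}_x[\dist(\widetilde\omega_T,E)\le 1]\ge\delta$, while the half-balls $B(y,1/2)$ are pairwise disjoint and lie in the $1/2$-neighborhood of $A_T$. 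A Harnack estimate, available in bounded geometry and deducible from Proposition \ref{HCom}, keeps $\wp(T,x,\cdot)$ comparable to $e^{-(\hs+\epsilon)T}$ on this neighborhood, so its volume is $\lesssim e^{(\hs+\epsilon)T}$; combined with the uniform positive lower bound on $\vol(B(\cdot,1/2))$, this yields $\card(E)\le C\,e^{(\hs+\epsilon)T}$, and letting $\epsilon\to 0$ completes the upper bound.

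For the lower bound, fix $\epsilon>0$ and consider $B_T:=\{y\in\MM:\wp(T,x,y)\le e^{-(\hs-\epsilon)T}\}$, which also satisfies $\mathbb{P}_x[\widetilde\omega_T\in B_T]\to 1$. Given any $E$ with $\mathbb{P}_x[\dist(\widetilde\omega_T,E)\le 1]\ge\delta$, intersecting the two events gives $\mathbb{P}_x[\widetilde\omega_T\in B_T\cap N_1(E)]\ge\delta/2$ for $T$ large. Using the upper bound on $\wp$ on $B_T$ and the uniform upper bound $V_+$ on volumes of unit balls,
\begin{equation*}
\frac{\delta}{2}\le e^{-(\hs-\epsilon)T}\,\vol(N_1(E))\le V_+\,\card(E)\,e^{-(\hs-\epsilon)T},
\end{equation*}
so $\card(E)\ge c\,\delta\,e^{(\hs-\epsilon)T}$ and $\liminf_T T^{-1}\log N(x,T,\delta)\ge\hs$, as desired.

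The one nontrivial ingredient, and hence the main obstacle, is the Harnack-type control of $\wp(T,x,\cdot)$ on a unit scale used in the upper bound: without it, the volume bound on $A_T$ does not translate into a $1$-covering of $A_T$ by $O(e^{(\hs+\epsilon)T})$ balls. In pinched negative curvature this is standard, following either from the Li--Yau gradient inequality or from Proposition \ref{HCom} combined with local volume comparison; everything else in the argument is bookkeeping around the almost-sure convergence.
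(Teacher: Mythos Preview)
Your lower bound is essentially the paper's argument: the paper's set $\mathscr{C}_{T,x}$ is your $B_T$, and the conclusion $\card(E)\ge c\,\delta\,e^{(\hs-\varepsilon)T}$ is reached in the same way.

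Your upper bound, however, takes a genuinely different route. The paper does not work with the sublevel set $A_T$ at all; instead it exploits the geodesic tracking of Brownian paths (equation \eqref{roughpath}). Concretely, the paper introduces
\[
\mathscr{D}_{T,x}=\bigl\{\widetilde\omega:\ \dist\bigl(\widetilde\omega_T,\gamma_{\theta(\widetilde\omega,\infty)}(\ld T)\bigr)\le \varepsilon T,\ \wp\bigl(T,x,\gamma_{\theta(\widetilde\omega,\infty)}(\ld T)\bigr)\ge e^{-T(\hs+\varepsilon)}\bigr\},
\]
takes a maximal $\tfrac12$-separated set $F$ among the geodesic points $\gamma_{\theta(\widetilde\omega,\infty)}(\ld T)$, bounds $\card(F)\le C'e^{T(\hs+\varepsilon)}$ from the heat-kernel lower bound on these points, and then observes that each $f\in F$ accounts for at most $e^{C''\varepsilon T}$ points of a minimal $E$ because the corresponding $\widetilde\omega_T$ all lie in a ball of radius $O(\varepsilon T)$. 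Your argument bypasses the asymptotic direction $\theta(\widetilde\omega,\infty)$ entirely: the volume bound $\vol(A_T)\le e^{(\hs+\varepsilon)T}$ plus a unit-scale same-time Harnack estimate for $\wp(T,x,\cdot)$ and the uniform lower bound on $\vol B(\cdot,\tfrac12)$ immediately control $\card(E)$. This is more elementary and portable---it would work for any diffusion whose transition density satisfies Li--Yau type estimates, with no reference to geodesics---at the price of invoking the Harnack step you flag. The paper's approach trades that analytic input for the geometric input \eqref{roughpath}, which is already established in the preliminaries and fits the surrounding narrative (the same geodesic tracking is reused in Theorem~\ref{SE} and Proposition~\ref{eqd}). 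Both arguments are correct.
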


\begin{proof}
Fix $\varepsilon>0$. Let 
\begin{align*}
\mathscr{C}_{T,x}
&:=
\{
\widetilde\omega_{0}=x, 
\wp(T, \widetilde\omega_{0}, \widetilde\omega_{T}) 
\le 
e^{-T(\hs-\varepsilon)}
\},\\
\mathscr{D}_{T, x}
&:=
\{
\widetilde\omega: 
\dist(\widetilde\omega_{t}, \gamma_{\theta(\widetilde\omega, \infty)}(\ld T))\le \varepsilon T, 
\wp(T, x, \gamma_{\theta(\widetilde\omega, \infty)}(\ld T))
\ge 
e^{-T(h+\varepsilon)}
\}.
\end{align*}
Choose a sufficiently large $T$ such that
$1-\frac{\delta}{2}
\le 
\mathbb{P}_{x}(\mathscr{C}_{T,x})=\mathbb{P}_{x}[\widetilde\omega_{T}\in\pi_{T}\mathscr{C}_{T,x}]$. 
We denote by $\mathbb{E}_{x}$ the expectation with respect to $\mathbb{P}_{x}$.
For each finite set $E$ such that 
$\mathbb{P}_{ x}[ d(\widetilde\omega _T, E)\le 1]\ge\delta$,
\begin{align*}
\delta 
\le\mathbb{E}_{x}[d(\widetilde\omega_{T}, E)\le1] 
&=\mathbb{P}_{x}[\{d(\widetilde\omega_{T}, E)\le1\}\cap\mathscr{C}_{T,x}]
+\mathbb{P}_{x}[\{d(\widetilde\omega_{T}, E)\le1\}\setminus\mathscr{C}_{T,x}]\\
&\le e^{-T(\hs-\varepsilon)}\sum_{y\in E} \vol B(y,1) + 1-(1-\frac{\delta}{2})\\
&\le C e^{-T(\hs-\varepsilon)} \card(E) + \frac{\delta}{2},
\end{align*}
where $C= \sup_{z} \vol B(z,1)$.
Thus, $\frac{\delta}{2C} e^{T(\hs-\varepsilon)}\le \card(E)$ and we have
\begin{equation*}
\hs \le \lim_{T\to\infty} \frac{1}{T}\log N(x,T,\delta).
\end{equation*}

For the converse inequality, Let $E$ be a minimal set satisfying $d(\widetilde\omega_{T}, E)\le 1$ for every $\widetilde\omega\in\mathscr{D}_{T, x}$ and $F\subset \{\gamma_{\theta(\widetilde\omega, \infty)}(\ld T): \widetilde\omega\in\mathscr{D}_{T, x}\}$ a maximal $\frac{1}{2}$-separated set.
Note that $\card (E) \ge N(x, T, \mathbb{P}_{x}(\mathscr{D}_{T, x}))$ and $\card (F) \le C' e^{T(h+\varepsilon)}$.
For each $f\in F$, 
\begin{equation*}
N(f)
:= \{e \in E: 
\exists\,\widetilde\omega \in \mathscr{D}_{T, x} 
\textrm{ s.t. } 
\dist(f, \gamma_{\theta(\widetilde\omega, \infty)}(\ld T)) \le \frac{1}{2}, 
\, 
\dist(\widetilde\omega_{T}, e)\le1 \}.
\end{equation*}
Then $\card N(f) \le e^{C'' \varepsilon T}$. Therefore, we have
\begin{equation*}
N(x, T, \mathbb{P}_{x}(\mathscr{D}_{T, x}))
\le
\card(E)
\le
e^{C''\varepsilon T}\card (F)
\le 
C'
e^{T [h+ (2+ C'')\varepsilon]}.
\end{equation*}
Given $\delta$, for each $T$ large enough, $N(x, T, \delta)\le N(x, T, \mathscr{D}_{T.,x})$.
\end{proof}

The stochastic entropy is related to the spectral information of $\MM$, 
the bottom of the spectrum $\lambda_{0}:=\inf \mathrm{Spec}(\Delta_{\MM})$ of the Laplacian on $\MM$. 
Note that $\lambda_{0}=(d-1)^{2}a^{2}/4$ if $\MM$ has constant negative curvature $-a^{2}$.
It was proved in Proposition 3 of \cite{Led_1990} for co-compact manifolds. 
The proof is valid for pinched negative curvature and even the co-finiteness is not required.

\begin{prop} 
$4\lambda_{0}\le\hs.$
\end{prop}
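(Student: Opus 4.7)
The plan is to follow the entropy/Fisher information argument of \cite{Led_1990}, which extends to our pinched negative curvature setting modulo some care with uniform integrability. Fix $x \in \MM$ and introduce the differential entropy
\begin{equation*}
H(t) := - \int_{\MM} \wp(t, x, y) \log \wp(t, x, y) \, d\vol(y) = \mathbb{E}_{x}\bigl[ -\log \wp(t, x, \widetilde\omega_t) \bigr], \quad t > 0.
\end{equation*}
First I differentiate $H$ in $t$. Using $\partial_t \wp = \Delta_y \wp$ and integrating by parts---legitimate because Proposition~\ref{HCom} furnishes enough Gaussian decay of $\wp$ and of $\nabla \wp$ for the boundary terms at infinity to vanish---the standard entropy dissipation computation gives the Fisher information identity
\begin{equation*}
H'(t) = \int_{\MM} \frac{|\nabla_y \wp(t, x, y)|^2}{\wp(t, x, y)} \, d\vol(y) = 4 \int_{\MM} \bigl|\nabla_y \sqrt{\wp(t, x, y)}\bigr|^2 \, d\vol(y).
\end{equation*}
The factor $4$ produced by passing to the square root is the source of the constant $4$ in $4\lambda_0$.

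Next, since $\int_{\MM} \wp(t,x,y)\, d\vol(y) = 1$, the $L^{2}$-norm of $\sqrt{\wp(t,x,\cdot)}$ equals $1$, so the Rayleigh quotient characterization $\lambda_0 = \inf_{f \neq 0}\int|\nabla f|^{2}/\int f^{2}$ immediately gives $\int_{\MM}|\nabla \sqrt{\wp}|^{2}\,d\vol \ge \lambda_0$. Hence $H'(t) \ge 4\lambda_0$ for every $t > 0$, and integrating in time from some $t_{0} > 0$ yields $H(t) \ge H(t_{0}) + 4\lambda_0(t - t_{0})$, whence $\liminf_{t \to \infty} H(t)/t \ge 4\lambda_0$.

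The remaining step is to identify $\lim_{t\to\infty} H(t)/t$ with $\hs$. By the preceding proposition, $-t^{-1}\log \wp(t,x,\widetilde\omega_t) \to \hs$ almost surely. Combining the upper comparison $\wp(t,x,y) \ge \wp_{\mathbb{H}^{d}(-b^{2})}(t,\dist(x,y))$ from Proposition~\ref{HCom} with the explicit large-time asymptotic of the hyperbolic heat kernel yields a pointwise bound of the form $-\log \wp(t,x,y) \le C_{1}t + C_{2}\dist(x,y)^{2}/t + C_{3}\dist(x,y) + O(\log t)$, and since the same comparison bounds $\mathbb{E}_{x}[\dist(x,\widetilde\omega_t)^{2}]$ by $O(t^{2})$, the family $\bigl(t^{-1}|\log \wp(t,x,\widetilde\omega_t)|\bigr)_{t \ge 1}$ is uniformly integrable. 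Therefore $H(t)/t \to \hs$, and together with the previous inequality this gives $\hs \ge 4\lambda_0$. The main technical obstacle is precisely this uniform integrability step, immediate in the co-compact situation of \cite{Led_1990} but genuinely relying on Hsu's comparison (Proposition~\ref{HCom}) in the present finite-volume setting.
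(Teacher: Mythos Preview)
Your argument is correct and is essentially the same as the paper's: both compute the time derivative of the differential entropy $H(t)=-\int\wp\log\wp$, rewrite it as $4\int|\nabla\sqrt{\wp}|^{2}$, and apply Rayleigh's characterization of $\lambda_{0}$; the paper merely packages the first step via the fundamental theorem of calculus before integrating by parts, while you differentiate first and integrate afterwards. If anything, you are more careful than the paper about the passage from the almost sure limit $-t^{-1}\log\wp(t,x,\widetilde\omega_t)\to\hs$ to the expectation statement $H(t)/t\to\hs$, which the paper simply asserts.
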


\begin{proof}
Since $\wp(t, x, y)$ is a solution of the heat equation,
\begin{align*}
\wp(t, x, y)\log \wp(t, x, y)
&=
\int_{0}^{t}
\frac{\partial}{\partial s}
\left(
\wp(s, x, y)\log \wp(s, x, y)
\right)
ds\\
&=
\int_{0}^{t}
(1+\log \wp(s, x, y))\frac{\partial}{\partial s}\wp(s, x, y)
ds\\
&=\int_{0}^{t}
(1+\log \wp(s, x, y))\Delta_{y}\wp(s, x, y)
ds.
\end{align*}
By applying this equation,
\begin{align*}
\hs
&=
\lim_{t\to \infty}
-\frac{1}{t}
\int_{\MM} \wp(t, x, y)\log \wp(t, x, y) d\vol(y)\\
&=
\lim_{t\to\infty}
\frac{1}{t}
\int_{0}^{t}
\int_{\MM}
\langle
\nabla \log \wp(s, x, y),
\nabla \wp(s, x, y)
\rangle_{g}
d\vol(y)
ds\\
&=
\lim_{t\to\infty}
\frac{4}{t}
\int_{0}^{t}
\int_{\MM}
\left\| 
\nabla \sqrt{\wp(s, x, y)} 
\right\|^{2}
d\vol(y)
ds\\
&\ge 
\frac{4}{t}\int_{0}^{t}\lambda_{0} ds
=4\lambda_{0}.
\end{align*}
The inequality is due to Rayleigh's theorem.
\end{proof}

\subsection{Thermodynamic formalisms in pinched negative curvature}
We provide some general theory of thermodynamic formalisms for geodesic flows in pinched negative curvature. 
Notions and detailed arguments can be found in \cite{PPS}.
A function $F$ on $\T^{1}\M$ is called a \textit{potential} on $\T^{1}\M$ if it is bounded and H\"older continuous. 
For a $\g^{t}$-invariant Borel probability measure $\mu$, 
if $h_{\mu}$ is the measure-theoretic entropy of the dynamical system $(\T^{1}\M, \g^{1}, \mu)$, 
we denote the pressure of $F$ for $\mu$ by $P(F, \mu)$:
\begin{equation*}
P(F, \mu)= h_{\mu} +\int_{\T^{1}\M} F d\mu.
\end{equation*}
An \textit{equilibrium state} $\mu_{F}$ for $F$ is a $\g^{t}$-invariant Borel probability measure of maximal pressure:
\begin{equation*}
P(F,\mu_{F})=\sup \, P(F, \mu)
\end{equation*}
where the supremum is taken among $\g^{t}$-invariant Borel probability measures $\mu$ s.t. $F_{-}:= \max\{ -F, 0\}$.
We denote the supremum by $P_{F}$.

Given a potential $F$ on $\T^{1}\M$, we denote the lift to $\T^{1}\MM$ by $\widetilde{F}$.
We define a line integral of a potential by
\begin{equation*}
\int_{x}^{y} \widetilde{F}
:= \int_{0}^{\dist(x,y)} \widetilde{F}(\g^{t}\rv_{x}^{y})dt,
\end{equation*}
where $\rv_{x}^{y}\in \T^{1}_{x}\MM$ is the unit vector at $x$ pointing $y$: $\gamma_{\rv_{x}^{y}}(\dist(x,y))= y$.
A \textit{Patterson-Sullivan density} for $F$ of dimension $\delta$ is a family $(\mu_{x})_{x\in\MM}$ of finite Borel measures absolutely continuous to each other on $\partial\MM$ satisfying
\begin{align*}
\gamma_{*}\mu_{x}
&=\mu_{\gamma x},
\\
d\mu_{y}(\xi)
&=\exp\left( C_{F-\delta}(x, y, \xi)\right)d\mu_{x}(\xi),
\end{align*}
for each 
$x, y \in \MM$, $\gamma \in \Gamma$
where 
\begin{equation*}
C_{F}(x, y, \xi)
:=\lim_{z \to \xi} \int_{y}^{z}\widetilde{F} - \int_{x}^{z} \widetilde{F}. 
\end{equation*}
We denote by $\mu^{\T}_{x}$ the \textit{spherical measure} at $x$, the push-forward measure of $\mu_{x}$ via the inverse of homeomorphism $\T^{1}_{x}\MM \to \partial\MM$ for each $x\in\MM$.

Let $\rv\in\T^{1}\M$ with a lift $\widetilde\rv$ to a vector in $\T^{1}\MM$. 
Define the \textit{Bowen ball} around $\rv$ by
\begin{equation*}
B(\rv, T, T', r)
:= 
\{ \mathrm{w}\in \T^{1}\M: 
\sup_{t\in[-T', T]}\dist( \gamma_{\widetilde\rv}(t), \gamma_{\widetilde{\mathrm{w}}}(t))<r,
\, \exists \,\textrm{a lift }\widetilde\rw\in\T^{1}\MM
\}.
\end{equation*}
One can construct a Gibbs measure from a Patterson-Sullivan density.
That is, if a Patterson-Sullivan density $(\mu_{x})$ for $F$ of dimension $P_{F}$ is given, 
there is a $\g^{t}$-invariant Borel measure $\widetilde\mu$ on $\T^{1}\MM$ which is $\Gamma$-invariant and whose induced measure $\mu$ on $\T^{1}\M$ has a Gibbs property (see Section 3.8 of \cite{PPS}):
For each compact set $K\in\T^{1}\MM$, there exist $r>0$ and $c_{K, r}>0$ such that for every $T, T'\ge0$
and for every $\rv$,
\begin{equation*}
c_{K, r}^{-1}\exp\int_{-T'}^{T} \left(F(\g^{t}\rv)-P_{F}\right)dt
\le
\mu(B(\rv, T, T', r)
\le 
c_{K, r} \exp\int_{-T'}^{T} \left(F(\g^{t}\rv)-P_{F}\right) dt.
\end{equation*}
We call $\widetilde\mu$ the \textit{Gibbs measure} of $F$ and $(\mu_{x})$. 
The Gibbs measure determines whether an equilibrium state for $F$ exists or not.

\begin{prop}\label{VP}(\cite{PPS})
$F$ is H\"older continous with $P_{F}<\infty$. 
\begin{enumerate}
\item
there is a Patterson-Sullivan density $(\mu_{x})$ for $F$ of dimension $P_{F}$ unique up to multiplicative constants.
\item
If the corresponding Gibbs measure $\widetilde\mu_{F}$ induces a finite measure $\mu_{F}$ on $\T^{1}\M$ then $\mu_{F}$ is the unique equilibrium state for $F$ and $\mu_{F}$ is ergodic.
Otherwise, there is no equilibrium state for $F$.
\end{enumerate}
\end{prop}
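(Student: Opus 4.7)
The plan is to adapt the Patterson--Sullivan--Gibbs construction in the form presented in \cite{PPS}, working in three stages: first produce the conformal density, then assemble a geodesic-flow invariant measure via the Hopf parametrization, and finally compare any candidate equilibrium state against this Gibbs measure.

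For part (1), I would introduce the twisted Poincar\'e series
\begin{equation*}
Q_{F,s}(x,y) = \sum_{\gamma\in\Gamma} \exp\Bigl(\int_{x}^{\gamma y}\bigl(\widetilde F - s\bigr)\Bigr),
\end{equation*}
whose critical exponent equals $P_{F}$ by the variational characterization of pressure. Applying the Patterson trick if necessary to force divergence at $s = P_{F}$, I form the normalized orbital sums $\mu_{x,s} = Q_{F,s}(x,x)^{-1}\sum_{\gamma}\exp(\int_{x}^{\gamma x}(\widetilde F - s))\,\delta_{\gamma x}$ on $\overline{\MM}$, extract weak-$*$ subsequential limits as $s\downarrow P_{F}$, and verify via a shadow lemma (which uses the H\"older continuity of $F$ and the CAT($0$) geometry controlling $C_{F-P_{F}}$) that the limits are supported on $\partial\MM$ and satisfy the required transformation rule. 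Uniqueness up to scaling follows from the ergodicity of the diagonal $\Gamma$-action on $(\partial\MM\times\partial\MM,\mu_{x}\otimes\mu_{x})$, since two densities would yield a $\Gamma$-invariant ratio that must be $\mu_{x}$-a.e.\ constant.

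For part (2), I would use the Hopf parametrization $\rv\mapsto(\rv_{-},\rv_{+},t)$ to write
\begin{equation*}
d\widetilde\mu_{F}(\rv) = \exp\Bigl( C_{F-P_{F}}(x_{0},\pi\rv,\rv_{+}) + C_{F-P_{F}}(x_{0},\pi\rv,\rv_{-})\Bigr)\, d\mu_{x_{0}}(\rv_{+})\,d\mu_{x_{0}}(\rv_{-})\,dt,
\end{equation*}
check $\Gamma$-invariance directly from the transformation rule, and observe that invariance under $\g^{t}$ is built into the $dt$-factor. The Gibbs property on Bowen balls then follows by combining the shadow lemma with this product description. Assuming $\mu_{F}$ is finite on $\T^{1}\M$, ergodicity comes from a Hopf argument on the conditional measures along stable and unstable foliations, and the variational equality $P(F,\mu_{F}) = P_{F}$ follows by testing against Bowen covers and using the Gibbs bounds to match the exponential counting rate appearing in the definition of $P_{F}$. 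For uniqueness, any ergodic equilibrium state $\nu$ must, by the Gibbs inequality applied to the Bowen partition, be absolutely continuous with respect to $\mu_{F}$ on each compact set, so ergodicity of both measures forces $\nu=\mu_{F}$.

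The hard part is the ``otherwise'' clause: showing that if $\mu_{F}$ has infinite total mass, no equilibrium state exists. The plan is to argue by contradiction. Given a putative equilibrium state $\nu$, one transfers the Gibbs-type lower bound on the measure of Bowen balls under $\widetilde\mu_{F}$ to a lower bound on the dynamical partition entropy of $\nu$, showing that $\nu$ must charge every dynamically defined set in proportion to $\widetilde\mu_{F}$; this forces $\nu$ to be absolutely continuous with respect to the (infinite) $\mu_{F}$ with a $\g^{t}$-invariant Radon--Nikodym derivative. Ergodicity of $\widetilde\mu_{F}$ for $\g^{t}$ (via the Hopf argument on the unstable foliation) then forces this derivative to be constant, contradicting $\nu(\T^{1}\M)=1<\infty=\mu_{F}(\T^{1}\M)$. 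The technical obstacle throughout is the escape of mass into the cusps: one must exploit the pinched curvature bounds and $|\nabla\mathrm{sec}|\le c$ to get a uniform shadow lemma and uniform control of $C_{F}$ on geodesic rays that excursion into the thin part, which is exactly where the arguments of \cite{PPS} become delicate.
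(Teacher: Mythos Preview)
The paper does not prove this proposition: it is stated with the citation \cite{PPS} and no argument is given. So there is no ``paper's own proof'' to compare against; the intended proof is simply the one developed in Paulin--Pollicott--Schapira.

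Your outline follows the PPS architecture faithfully for part~(1) and for the construction and ergodicity of the Gibbs measure in part~(2). One point of divergence is your treatment of the ``otherwise'' clause. In \cite{PPS} the nonexistence of equilibrium states when $\mu_{F}$ is infinite is not obtained via your absolute-continuity-plus-constant-Radon--Nikodym argument. Rather, the variational principle is proved by showing, for \emph{every} invariant probability measure $\nu$, an inequality $h_{\nu}+\int F\,d\nu\le P_{F}$ via the entropy of conditional measures on strong unstable leaves, together with the identification of the equality case: the conditionals must be in the Patterson--Sullivan class, which forces $\nu$ to be (a scalar multiple of) the Gibbs measure. If that measure is infinite, equality is impossible for any probability $\nu$, and no equilibrium state exists. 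Your contradiction route would require justifying that a hypothetical equilibrium state is absolutely continuous with respect to an \emph{infinite} Radon measure with $\g^{t}$-invariant density, and then invoking ergodicity of the infinite measure; both steps are delicate (ergodicity for $\sigma$-finite infinite invariant measures does not immediately yield that invariant functions are constant in the needed sense, and the passage from Gibbs bounds on Bowen balls to absolute continuity is nontrivial without an a~priori local product structure for $\nu$). The leafwise-entropy argument in \cite{PPS} avoids these issues entirely.
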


V. Pit and B. Schapira found a necessary and sufficient condition for the finiteness of Gibbs measure in \cite{PS_2018}. One can find the same statement also in \cite{PPS}.

\begin{prop} \label{SPR}
A H\"older continuous potential $F$ admits an equilibrium state if and only if for every maximal parabolic subgroup $\Pi$ of $\Gamma$, the following series converges:
\begin{equation*}
\sum_{\gamma\in\Pi} 
\dist_{}(x, \gamma x)
\exp
\int_{x}^{\gamma x} (\widetilde{F}-P_{F}).
\end{equation*}
\end{prop}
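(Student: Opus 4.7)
The plan is to invoke Proposition~\ref{VP}, which reduces the existence of an equilibrium state for $F$ to the finiteness of the Gibbs measure $\mu_{F}$ built from a Patterson--Sullivan density $(\mu_{x})$ of dimension $P_{F}$. Because $\M$ is a finite-volume manifold with pinched negative curvature, it is geometrically finite, so it decomposes as the union of a compact core and finitely many cuspidal ends, each of which lifts to a horoball in $\MM$ stabilized by a maximal parabolic subgroup $\Pi$. Since $\widetilde{\mu}_{F}$ is locally finite on $\T^{1}\MM$, the compact core contributes finite mass, and the problem reduces to deciding, for each such $\Pi$, whether $\mu_{F}$ restricted to the unit tangent bundle over the corresponding cuspidal end is finite.

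Using the Hopf parametrization $\rv \mapsto (\rv_{-}, \rv_{+}, t)$ one can disintegrate the Gibbs measure as
$$d\widetilde{\mu}_{F}(\rv) \;=\; \exp\bigl[C_{F-P_{F}}(x, \pi\rv, \rv_{-}) + C_{F-P_{F}}(x, \pi\rv, \rv_{+})\bigr]\, d\mu_{x}(\rv_{-})\, d\mu_{x}(\rv_{+})\, dt,$$
and fold the integration over the cuspidal end into a sum over $\gamma \in \Pi$ by choosing a $\Pi$-fundamental domain on the horosphere based at the parabolic fixed point.

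The core analytic ingredient would be a global shadow lemma for parabolic orbits: for the shadow $\mathcal{O}_{x}(B(\gamma x, R))$ on $\partial\MM$ seen from $x$ of a ball around $\gamma x$, one has
$$\mu_{x}\bigl(\mathcal{O}_{x}(B(\gamma x, R))\bigr) \;\asymp\; \exp\int_{x}^{\gamma x}(\widetilde{F}-P_{F}),$$
with constants uniform in $\gamma \in \Pi$. Coupled with the observation that a geodesic whose endpoint lies in such a shadow spends a sojourn time comparable to $\dist(x, \gamma x)$ inside the horoball, this would yield
$$\mu_{F}(\text{cuspidal end over } \Pi) \;\asymp\; \sum_{\gamma \in \Pi} \dist(x, \gamma x)\, \exp\int_{x}^{\gamma x}(\widetilde{F}-P_{F}),$$
from which the equivalence follows. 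The linear factor $\dist(x, \gamma x)$ is the extra weight that distinguishes this criterion from a pure Poincar\'e-series condition.

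The hard part will be the quantitative shadow lemma for parabolic orbit points and, more subtly, the comparison between the cocycle $C_{F-P_{F}}(x, \pi\rv, \rv_{\pm})$ on the shadow and the line integral $\int_{x}^{\gamma x}(\widetilde{F}-P_{F})$. For H\"older continuous $F$ the pinched negative curvature forces geodesics from $x$ to $\gamma x$ to follow a cusp excursion pattern that is uniformly close, up to bounded-duration transit segments, to the family of geodesic rays used to define $C_{F-P_{F}}$. Turning this geometric closeness into an $O(1)$ error in the potential integrals, uniformly in $\gamma \in \Pi$, is what makes the Pit--Schapira argument delicate and is the technical heart of the proof.
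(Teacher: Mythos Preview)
The paper does not prove this proposition at all: it is stated with attribution to Pit and Schapira \cite{PS_2018} (and also to \cite{PPS}) and is used as a black box. There is therefore no ``paper's own proof'' to compare against.

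Your outline is in fact a faithful high-level summary of the Pit--Schapira argument that the paper is citing: reduce to finiteness of the Gibbs measure via Proposition~\ref{VP}, use geometric finiteness to isolate the cuspidal contributions, express the Gibbs mass over a cusp via the Hopf parametrization, and then control it by a shadow lemma for parabolic orbit points together with the fact that sojourn time in a horoball is comparable to $\dist(x,\gamma x)$. You also correctly identify the delicate step (uniform comparison of the Gibbs cocycle with the line integral along parabolic orbits). If your intention is merely to justify invoking the result, a citation suffices, exactly as the paper does; if you want to reprove it, what you have is the right skeleton, but the shadow lemma with multiplicative constants uniform over the full parabolic orbit and the cocycle comparison are genuine pieces of work that you would need to carry out in detail rather than assert.
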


We have an ergodic theorem for the geodesic flow with respect to spherical measures. 
We also derive a Gibbs property for spherical measures (see \cite{Led_1988}).

\begin{prop} \label{transversalerg}
If a bounded H\"older continuous potential $F$ admits an equilibrium state $\mu$ then
for every $\phi\in\C_{b}(\T^{1}\M)$, $x\in \M$ and for $\mu^{\T}_{x}$-a.e. $\rv$ in $\T^{1}\M$,
\begin{align}
\label{transversalpterg}
\frac{1}{t}
\int_{0}^{t} \phi(\g^{s}\rv) ds 
\to 
\int_{\T^{1}\M} \phi \, d \mu
&\, 
\textrm{ as }t \to \infty,
\\
\label{transversalmeasure}
\lim_{t\to\infty} 
-\frac{1}{t} 
\log \mu^{\T}_{x}(B(\rv, t, 0, \varepsilon))
=
h_{\mu}
&\,
\textrm{ for some } \varepsilon>0.
\end{align}
\end{prop}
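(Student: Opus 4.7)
The plan is to derive both statements from the disintegration of the Gibbs measure $\widetilde\mu$ in Hopf coordinates together with the shadow lemma for the Patterson--Sullivan density $(\mu_x)$. Part (\ref{transversalpterg}) is a Hopf-type transversal ergodic theorem, and part (\ref{transversalmeasure}) is a Bowen-ball mass estimate reduced to (\ref{transversalpterg}) via the variational principle.

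For (\ref{transversalpterg}) I would first recall the Hopf parametrization identifying $\T^{1}\MM$ (off the anti-diagonal) with $(\partial\MM\times\partial\MM\setminus\Delta)\times\mathbb{R}$ through $\rv\mapsto(\rv_{-},\rv_{+},s)$, under which the Gibbs measure admits a product-type factorization
\begin{equation*}
d\widetilde\mu(\rv)=D(x,\rv_{-},\rv_{+})\,d\mu_{x}(\rv_{+})\,d\mu_{x}^{*}(\rv_{-})\,ds,
\end{equation*}
where $\mu_{x}^{*}$ is the Patterson--Sullivan density of the time-reversed potential $F\circ\iota$ and $D$ is a positive continuous density; this is the Hopf--Sullivan--Burger--Roblin decomposition of Section 3.8 in \cite{PPS}. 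Since $\mu$ is the unique ergodic equilibrium state by Proposition \ref{VP}, Birkhoff's theorem yields
\begin{equation*}
\bar\phi(\rv):=\lim_{t\to\infty}\frac{1}{t}\int_{0}^{t}\phi(\g^{s}\rv)\,ds=\int_{\T^{1}\M}\phi\,d\mu
\end{equation*}
for $\mu$-a.e.\ $\rv$. Because $\phi$ may be assumed compactly supported (and then extended to $\C_{b}$ by a standard approximation using finiteness of $\mu$), the limit $\bar\phi$ is invariant along weak stable leaves $\widetilde\W^{s}(\rv)$ by their very definition, hence depends only on $\rv_{+}$. Combining this with the factorization of $\widetilde\mu$ and Fubini's theorem, for $\mu_{x}$-a.e.\ $\xi\in\partial\MM$ the corresponding $\rv$ satisfies $\bar\phi(\rv)=\int\phi\,d\mu$. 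Translating through the homeomorphism $\T^{1}_{x}\MM\to\partial\MM$ gives (\ref{transversalpterg}).

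For (\ref{transversalmeasure}) I would apply the shadow lemma for $(\mu_{x})$ to convert the spherical mass of a forward Bowen ball into an $F$-weighted line integral. In pinched negative curvature, for $\varepsilon$ above a threshold depending only on the curvature bounds, the map $\rw\mapsto\rw_{+}$ identifies $B(\rv,t,0,\varepsilon)\cap\T^{1}_{x}\M$ with the shadow from $x$ of the ball $B(\gamma_{\rv}(t),\varepsilon)\subset\MM$, up to a uniformly controlled thickening, since two geodesics issued from $x$ that remain $\varepsilon$-close at time $t$ must stay close on the whole segment $[0,t]$. The shadow lemma for $(\mu_x)$ of dimension $P_{F}$ (see \cite{PPS}) then yields
\begin{equation*}
\mu_{x}^{\T}(B(\rv,t,0,\varepsilon))\asymp\exp\int_{0}^{t}\bigl(F(\g^{s}\rv)-P_{F}\bigr)\,ds,
\end{equation*}
with multiplicative constants independent of $\rv$ and $t$. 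Taking $-\tfrac{1}{t}\log$ and applying (\ref{transversalpterg}) to the bounded H\"older function $\phi=F$, combined with the variational identity $P_{F}=h_{\mu}+\int F\,d\mu$, gives
\begin{equation*}
-\frac{1}{t}\log\mu_{x}^{\T}(B(\rv,t,0,\varepsilon))\longrightarrow P_{F}-\int F\,d\mu=h_{\mu},
\end{equation*}
as required.

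The main obstacle is justifying the two inputs in the noncompact finite-volume setting. The product decomposition of $\widetilde\mu$ and the uniformity of the shadow lemma are part of the PPS framework and are available under the hypothesis that $F$ admits an equilibrium state, since by Proposition \ref{SPR} the parabolic series $\sum_{\gamma\in\Pi}\dist(x,\gamma x)\exp\int_{x}^{\gamma x}(\widetilde{F}-P_{F})$ converges and controls the Gibbs mass carried by excursions into the cusps. With that control, the Fubini step in the Hopf argument and the multiplicative error in the shadow lemma remain bounded uniformly in $t$, which is precisely what is needed to pass to $-\tfrac{1}{t}\log$ and to treat arbitrary $\phi\in\C_{b}$ despite the lack of a global modulus of continuity.
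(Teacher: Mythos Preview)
Your proposal is correct, and for part (\ref{transversalpterg}) it is essentially the paper's argument: both use that the Birkhoff-generic set is saturated by stable leaves, hence depends only on $\rv_{+}$, and then pass to $\mu_{x}$ on $\partial\MM$ by the product/disintegration structure of $\widetilde\mu$. The paper phrases this as ``$G_{x}^{+}=G_{y}^{+}$ for all $x,y$'' rather than invoking the Hopf coordinates explicitly, but the content is identical.

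For part (\ref{transversalmeasure}) you take a genuinely different route. The paper does not go through the shadow lemma for $(\mu_{x})$; instead it applies the Gibbs property of the \emph{full} measure $\mu$ on Bowen balls, which was already quoted in the preliminaries, to obtain
\[
\lim_{t\to\infty}-\frac{1}{t}\log\mu\bigl(B(\rv,t,0,\varepsilon)\bigr)=P_{F}-\int F\,d\mu=h_{\mu},
\]
and then observes that $\mu_{x}^{\T}$ is a transversal measure, namely $\mu_{x}^{\T}(S)=\mu\bigl(\bigcup_{\rw\in S}W^{s}_{\varepsilon}(\rw)\bigr)$, so that the forward Bowen ball, being a union of local stable leaves, has the same logarithmic asymptotics for $\mu_{x}^{\T}$ as for $\mu$. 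Your approach instead identifies $B(\rv,t,0,\varepsilon)\cap\T^{1}_{x}\M$ with a shadow and invokes Mohsen's shadow lemma directly on the conformal density $\mu_{x}$. Both routes reduce to the same computation $P_{F}-\int F\,d\mu=h_{\mu}$ via part (\ref{transversalpterg}) with $\phi=F$. The paper's route is slightly more self-contained, since the Gibbs estimate it uses is exactly the one stated in Section~\ref{Pre} and requires only that $\rv$ lie in a fixed compact set (here the fibre $\T^{1}_{x}\M$); your route is more geometric but needs the shadow lemma from \cite{PPS}, and your claim of constants ``independent of $\rv$ and $t$'' should really read ``independent of $t$ for $\rv\in\T^{1}_{x}\M$'', which is all that is needed after taking $-\tfrac{1}{t}\log$.
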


\begin{proof}
Since $\mu$ is ergodic, 
the set $G$ of the vectors for which the convergence (\ref{transversalpterg}) holds is a union of stable leaves with $\mu(G)=1$.
Thus for any $x, y \in \M$, the projections $G_{x}^{+}:=\{\widetilde{\rv}^{+}: \rv\in\T^{1}_{x}\M\}$ and $G_{y}^{+}$ of fiber onto the boundary at infinity $\partial_{\infty}\MM$ are identical. 
Since $\mu_{x}^{\T}(G\cap \T^{1}_{x}\M) = \mu_{x}(G_{x}^{+})$, $G\cap \T^{1}_{x}\M$ is a $\mu_{x}^{\T}$-full set if and only if $G\cap\T^{1}_{y}\M$ is a $\mu_{y}^{\T}$-full set. 
Therefore $G\cap \T^{1}_{x}\M$ is a $\mu^{\T}_{x}$-full set for every $x\in\M$.

From the $P_{F}$-Gibbs property of $\mu$, for $\mu^{\T}_{x}$-a.e. $\rv\in G\cap \T^{1}_{x}\M$, 
\begin{align}\label{Gibbs}
\lim_{t\to\infty} 
-\frac{1}{t} \log \mu (B(\rv, t, 0, \varepsilon))
=
P_{F} 
-
\lim_{t\to\infty}
\frac{1}{t}\int_{0}^{t} F(\g^{s}\rv)ds
=
P_{F} - \int Fd\mu
= h_{\mu}.
\end{align}
A local stable manifold of $\rv \in \T^{1}\M$ is 
\begin{equation*}
W^{s}_{\varepsilon}(\rv):=\{ \rw: \dist(\g^{t}\rv, \g^{t}\rw)<\varepsilon, \, \forall t\ge0\}.
\end{equation*}
The spherical measure is a transversal measure, so it can be defined by local stable manifolds: 
\begin{equation*}
\mu_{x}^{\T} (S) = \mu\left( \cup_{\rw\in S} W^{s}_{\varepsilon}(\rw)\right).
\end{equation*}
Since the Bowen ball consists of local stable manifolds, (\ref{Gibbs}) holds when we replace $\mu$ by $\mu^{\T}_{x}$.
\end{proof}

There are two important potentials. 
The first is the zero potential, whose equilibrium state is the measure of maximal entropy, 
also called the \textit{Bowen-Margulis measure} if it admits an equilibrium state. 
The measure class of the Patterson-Sullivan density for the zero potential is called the \textit{visibility class}.

The other is the \textit{geometric potential} $\J$ induces from the $\Gamma$-invariant function
\begin{equation*}
\widetilde\J(\rv)= -\left.\frac{d}{dt}\right|_{t=0} \log \det \T_{\rv}\g^{t}|_{E^{su}(\rv)}
\end{equation*}
on $\MM$, 
where $\T_{\rv}\g^{t}:\T_{\rv}\T^{1}\MM \to \T_{\g^{t}\rv}\T^{1}\MM$ is the tangent map of the flow map $\g^{t}$ at $\rv$ and $E^{su}(\rv)= \T_{\rv}\W^{su}(\rv)$ is the strong unstable distribution. 
Due to the pinched negative curvature and the uniform bound on the first derivatives of the sectional curvature,
the angles between the stable leaves and the strong unstable leaves have positive lower bound 
and 
the foliations are H\"older continuous.
Thus $\J$ is H\"older continuous and the Liouville measure on $\T^{1}\M$ is the equilibrium state for $\J$.
The existence with an assumption on the pressure of $\J$ is proved in Chapter 7 of \cite{PPS} and \cite{Riq_2018} proves that the assumption is true in our case.
The measure class determined by the Patterson-Sullivan density for the geometric potential is called the \textit{Lebesgue class}.

\section{Central limit theorem of Brownian motions}

\subsection{Foliated Brownian motions}
We shall introduce a Markov process on $\T^{1}\M$ called the foliated Brownian motion for the stable foliation of $\T^{1}\M$. 
The foliated Brownian motion was first introduced in the way to develop the ergodic theory of foliations (See \cite{CanCon}, \cite{Garnett_1983}).

Fix a fundamental domain 
$\M_{0}\subset\MM$ of $\Gamma$. 
Identify $\M$, $\T^{1}\M$ with $\M_{0}$, $\M_{0}\times\partial\MM$, respectively. 
Note that $\widetilde\W^{s}(x, \xi)=\MM\times\{\xi\}$ is projected onto 
\begin{equation*}
\W^{s}(x, \xi):=\{(y, \gamma^{-1} \xi) \in \T^{1}\M : y\in\M_{0}, \, \gamma \in \Gamma \}.
\end{equation*}
The stable foliation $\W^{s}=\{\W^{s}(\rv):\rv\in\T^{1}\M\}$ of $\T^{1}\M$ is the collection of the projected stable leaves.
Similarly, we define the stable distribution $E^{s}$ of $\T^{1}\M$.
The stable leaves of $\W^{s}$ inherit the Riemannian metric from $g_{s}$ on the leaves of $\widetilde\W^{s}$ which is also denoted by $g_{s}$. 
We denote the inherited differentials by $\mathrm{div}_{s}$ and $\Delta_{s}$.

\begin{defn}
Let $\PP(\T^{1}\M)$ be the space of probability measures on $\T^{1}\M$.
We define a transition semigroup $\mathbf{P}: (0, \infty)\times \T^{1}\M \to \PP(\T^{1}\M)$ 
by 
\begin{equation*}
d\,\mathbf{P}[t, \rv](\rw)
= \sum_{\gamma\in\Gamma}\wp(t, x, \gamma y) \, d\delta_{\gamma^{-1}\xi}(\eta)\left. d\vol\right|_{\M_{0}}(y),
\end{equation*}
for $\rv=(x,\xi), \rw=(y,\eta)\in \T^{1}\M$. 
The transition semigroup defines a unique family $\{\mathbb{P}_{(x,\xi)}\}_{(x,\xi)\in\T^{1}\M}$ of Borel probability measures on the space $\T^{1}\Omega:=\C(\mathbb{R}_{+}, \T^{1}\M)$ of sample paths on $\T^{1}\M$.
The canonical filtration is the collection of the smallest $\sigma$-algebras $\Fi_{t}=\Fi_{t}(\T^{1}\M):=\sigma\{\pi_{s}: 0 \le s \le t\}$ for which the projections $\pi_{s}(\omega)= \omega_{s}$ on $\T^{1}\Omega$ are measurable. 
The canonical process $Z_{t}(\omega)=\omega_{t}$ of the filtered space $( \T^{1}\Omega, \left\{ \Fi_{t} \right\}_{0\le t\le\infty})$ is a Markov process with respect to $\mathbb{P}_{(x, \xi)}$,
which is called the foliated Brownian motion for $\W^{s}$ with initial distribution $\delta_{(x, \xi)}$,
for each $(x, \xi)\in \T^{1}\M$.
\end{defn}

We define the Markov operator
$\mathcal{Q}^{t}:\mathcal{C}_{b}(\T^{1}\M)\to\mathcal{C}_{b}(\T^{1}\M)$ 
on the space of bounded continuous functions on $\T^{1}\M$ by 
\begin{equation} \label{Markovint}
\Q^{t}f (\rv) 
:= \int_{\T^{1}\M} f d\,\mathbf{P}[t, \rv]
= \sum_{\gamma\in\Gamma}\int_{\M_{0}} f(y, \gamma^{-1} \xi) \wp(t, x, \gamma y) d\vol(y).
\end{equation}
Note that the foliated Brownian motion for $\W^{s}$ is the projected process of a Markov process, called the foliated Brownian motion for $\widetilde\W^{s}$, with the transition semigroup 
\begin{equation} \label{diffusionint}
d\, \widetilde{\mathbf{P}}[t, \rv](\rw)
=\wp(t, x,  y) d\delta_{\xi}(\eta)d\vol(y).
\end{equation}
Let $\widetilde\Q^{t}$ be the Markov operator on $\T^{1}\MM$.
For any $f\in\C_{b}(\T^{1}\M)$ and for each $(x, \xi) \in \M_{0}\times\partial\MM$, 
\begin{equation*}
\mathcal{Q}^{t}f(x, \xi)
=\int_{\MM} \widetilde{f}(y, \xi)\wp(t, x, y)d\vol(y)
=\widetilde\Q^{t}\widetilde{f}(x, \xi),
\end{equation*}
where $\widetilde{f}$ is the $\Gamma$-invariant lift of $f$ to $\T^{1}\MM$. 
Note that the infinitesimal generator of the Markov operator is the foliated Laplacian:
\begin{equation*}
\left. \frac{d}{dt}\right|_{t=0} \Q^{t} f = \Delta_{s} f.
\end{equation*}

L. Garnett proved in \cite{Garnett_1983} that the Markov operator $\widetilde\Q$ admits an invariant measure $\Hh$ on $\T^{1}\MM$ of the form
\begin{equation*}
d\Hh(x,\xi)= d\nu_{x}(\xi)d\widetilde\m(x)=\kk(x_{0}, x,\xi)d\widetilde\m(x) d\nu_{x_0}(\xi),
\end{equation*} 
where 
$\widetilde{m}= \frac{1}{\vol(\M_{0})}\vol$ and $\nu_{x}$ is the harmonic measure.
We have an induced probability measure $\hh := \Hh|_{\M_{0}\times\partial\MM}$ on $\T^{1}\M$. 
By $\Gamma$-equivariance of $\nu_{x}$, 
\begin{align*}
\int_{\T^{1}\M} \Q^{t}f d\hh
&= 
\frac{1}{\vol(\M_{0})}
\int_{\M_{0}}\int_{\partial\MM} 
\sum_{\gamma} 
\int_{\M_{0}} \widetilde{f}(y, \gamma^{-1}\xi)\wp(t, x, \gamma y)
d\vol(y) d\nu_{x}(\xi) d\vol(x)
\\
&=
\frac{1}{\vol(\M_{0})}
\int_{\M_{0}}
\sum_{\gamma}
\int_{\M_{0}}\int_{\partial\MM} 
\widetilde{f}(y, \xi)\wp(t, \gamma^{-1}x, y)
d\nu_{\gamma^{-1}x}(\xi) d\vol(x) 
d\vol(y).
\end{align*}
Since we know $d\nu_{\gamma^{-1}x}(\xi) = \kk(y, \gamma^{-1} x, \xi)d\nu_{y}(\xi)$, 
the integrand in the right-handed side is:
\begin{align*}
&\sum_{\gamma}
\int_{\M_{0}}\int_{\partial\MM} 
\widetilde{f}(y, \xi)\wp(t, \gamma^{-1}x, y)
d\nu_{\gamma^{-1}x}(\xi) d\vol(x) \\
&= 
\int_{\partial\MM}
\widetilde{f}(y, \xi) 
\sum_{\gamma}
\int_{M_{0}}\wp(t, \gamma^{-1} x, y) \kk(y, \gamma^{-1} x, \xi) 
d\vol(x) d\nu_{y}(\xi)\\
&=
\int_{\partial\MM}
\widetilde{f}(y, \xi) 
\int_{\MM}
\wp(t, x, y)\kk(y, x, \xi) 
d\vol(x) d\nu_{y}(\xi)\\
&=\int_{\partial\MM} \widetilde{f}(x, \xi) d\nu_{y}(\xi).
\end{align*}
We used the harmonicity of the Martin kernel in the last equality:
\begin{equation*}
\int_{\MM}\wp(t, x, y)\kk(y, x, \xi) d\vol(x)= \kk(y, y, \xi)=1.
\end{equation*}
Therefore, we have the $\Q^{t}$-invariance of $\hh$. 
The stationary measure of the foliated Brownian motion is $\mathbb{P}_{\hh}=\int_{\T^{1}\M} \mathbb{P}_{(x, \xi)}d\hh(x, \xi)$ and is ergodic for the shift map on $\T^{1}\Omega$.

We have an integral expression of the linear drift and the stochastic entropy. 
Propsosition 2.9 and 2.16 in \cite{LedShu_2017} prove the same descriptions for the Brownian motion on co-compact negatively curved manifolds.
The identities for co-finite manifolds follow in the same way.

\begin{prop} \label{intcha}
$\ld^{2}\le\hs$. Moreover,
\begin{align*}
\ld 
&= 
\int_{\M_{0}}\int_{\partial\MM} \Delta_{y} \bb(y, x, \xi) d\nu_{y}(\xi) d\widetilde\m(y)
\\
&= 
\int_{\M_{0}} 
\int_{\partial\MM}
\langle  -\nabla_{y} \bb(y, x, \xi),\nabla_{y} \log \kk(x,y, \xi) \rangle_{g}
\,
d\nu_{y}(\xi) 
\,
d\widetilde\m(y),
\end{align*}
\textrm{ and }
\begin{align*}
\hs
&=
\int_{\M_{0}} 
\int_{\partial\MM}
|\nabla_{y} \log \kk(x,y, \xi)|^2 
\,
d\nu_{y}(\xi) 
\,
d\widetilde\m(y)
.
\end{align*}
\end{prop}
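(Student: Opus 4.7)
The plan is to apply It\^o's formula to the Busemann function and to $\log \kk$ along the Brownian path, then pass to the limit after evaluating $\xi=\widetilde\omega_\infty$, and finally integrate by parts to obtain the two expressions for $\ld$. Since the Brownian generator is $\Delta$ (as $\partial_t \wp=\Delta_y\wp$), and the harmonicity of $\kk(x,\cdot,\xi)$ yields $\Delta_y \log \kk = -|\nabla_y \log \kk|^2$, I get for each fixed $\xi$ and BM started at $x$
\begin{align*}
\bb(\widetilde\omega_t, x, \xi) - \bb(x, x, \xi) &= M_t^{\xi} + \int_0^t \Delta_y \bb(\widetilde\omega_s, x, \xi)\,ds,\\
\log \kk(x, \widetilde\omega_t, \xi) &= N_t^{\xi} - \int_0^t |\nabla_y \log \kk(x, \widetilde\omega_s, \xi)|^2\,ds,
\end{align*}
with $M_t^\xi, N_t^\xi$ continuous $\mathbb{P}_x$-martingales of linear quadratic variation. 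Conditioning on $\widetilde\omega_\infty=\xi$ changes the underlying diffusion into the Doob $h$-process with $h=\kk(x,\cdot,\xi)$, whose generator is $L^{\kk}=\Delta+2\langle \nabla \log \kk,\nabla\cdot\rangle$; hence after the pathwise substitution $\xi=\widetilde\omega_\infty$ the identities become, under the regular conditional probability,
\begin{align*}
\bb(\widetilde\omega_t, x, \widetilde\omega_\infty) &= \widetilde M_t + \int_0^t \bigl(\Delta_y \bb + 2\langle \nabla_y \log \kk, \nabla_y \bb\rangle\bigr)(\widetilde\omega_s, x, \widetilde\omega_\infty)\,ds,\\
\log \kk(x, \widetilde\omega_t, \widetilde\omega_\infty) &= \widetilde N_t + \int_0^t |\nabla_y \log \kk(x, \widetilde\omega_s, \widetilde\omega_\infty)|^2\,ds,
\end{align*}
where $\widetilde M_t, \widetilde N_t$ are genuine martingales with quadratic variations linear in $t$, hence both are $o(t)$ almost surely by the martingale law of large numbers.

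Next I identify the asymptotic rates of the left-hand sides. From \eqref{roughpath} and the definition of $\ld$, $\bb(\widetilde\omega_t, x, \widetilde\omega_\infty)/t \to -\ld$ almost surely; from the Martin kernel description $\kk(x,y,\xi)=\lim_{z\to\xi}\G(y,z)/\G(x,z)$ combined with $-\log \G(x,\widetilde\omega_t)/t \to \hs$, $\log\kk(x,\widetilde\omega_t,\widetilde\omega_\infty)/t \to \hs$ almost surely. On the other hand, the shift flow $(\Omega,\mathscr{S}^t,\mathbb{P}_\m)$ being ergodic, Birkhoff's theorem applied to the $\mathbb{P}_\m$-integrable function $\omega\mapsto f(\widetilde\omega_0,\widetilde\omega_\infty)$, for $f$ a bounded H\"older function on $\T^1\M$, gives $\frac{1}{t}\int_0^t f(\widetilde\omega_s,\widetilde\omega_\infty)\,ds \to \int_{\T^1\M}f\,d\hh$ almost surely. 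Dividing the displayed identities by $t$, letting $t\to\infty$, and matching the two convergences produces
\begin{align*}
\hs = \int_{\T^1\M}|\nabla_y \log \kk|^2\,d\hh,\qquad -\ld = \int_{\T^1\M}\Delta_y \bb\,d\hh + 2\int_{\T^1\M}\langle \nabla_y \log \kk, \nabla_y \bb\rangle\,d\hh.
\end{align*}

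Finally, the disintegration $d\hh(y,\xi)=\kk(x,y,\xi)\,d\widetilde\m(y)\,d\nu_{x}(\xi)$ and a leafwise integration by parts give
\begin{align*}
\int \Delta_y \bb \, d\hh = -\int \langle \nabla_y \bb, \nabla_y \log \kk\rangle\, d\hh,
\end{align*}
provided the boundary contributions at the cusps and at $\partial\MM$ vanish. Substituting into the previous identity collapses it to $\ld = \int \Delta_y \bb\,d\hh = -\int \langle \nabla_y \bb, \nabla_y \log \kk\rangle\,d\hh$, the two expressions claimed for $\ld$. Since $|\nabla_y \bb|\equiv 1$ and $\hh$ is a probability measure, the Cauchy--Schwarz inequality delivers the desired estimate
\begin{align*}
\ld^2 = \left(\int \langle -\nabla_y \bb, \nabla_y \log \kk\rangle\,d\hh\right)^2 \le \int |\nabla_y \bb|^2\,d\hh \cdot \int |\nabla_y \log \kk|^2\,d\hh = \hs.
\end{align*}
The main obstacle in transferring the argument of \cite{LedShu_2017} from the cocompact to the present co-finite setting will be justifying the vanishing of the boundary terms in the leafwise integration by parts near the cusps; this requires uniform bounds on $\log\kk$, $\nabla_y\log\kk$ and $\nabla_y\bb$ stemming from pinched curvature and from the cusp geometry of $\M$. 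The same uniform control is needed to ensure that the martingale LLN and Birkhoff's theorem both apply along $\mathbb{P}_x$-a.e. path for every $x\in\MM$, not just in an $\hh$-almost sure sense.
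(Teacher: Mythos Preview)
Your outline is essentially the martingale/It\^o argument that the paper defers to \cite{LedShu_2017}; the paper itself only spells out the second equality for $\ld$, and that step is exactly your leafwise integration by parts $\int \Delta_y\bb\,\kk\,d\widetilde\m = -\int \langle\nabla_y\bb,\nabla_y\kk\rangle\,d\widetilde\m$. So the overall strategy matches.

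There is one step you should tighten. The implication ``$\kk(x,y,\xi)=\lim_{z\to\xi}\G(y,z)/\G(x,z)$ together with $-\tfrac{1}{t}\log\G(x,\widetilde\omega_t)\to\hs$ gives $\tfrac{1}{t}\log\kk(x,\widetilde\omega_t,\widetilde\omega_\infty)\to\hs$'' is not immediate: the Martin kernel compares $\G(\widetilde\omega_t,z)$ to $\G(x,z)$ for $z\to\widetilde\omega_\infty$, not $\G(x,\widetilde\omega_t)$ directly. You need Ancona's inequality (since $\widetilde\omega_t$ lies, up to bounded distance by \eqref{roughpath}, on the geodesic $[x,\widetilde\omega_\infty)$) to get $\G(x,z)\asymp \G(x,\widetilde\omega_t)\G(\widetilde\omega_t,z)$ and hence $\kk(x,\widetilde\omega_t,\widetilde\omega_\infty)\asymp \G(x,\widetilde\omega_t)^{-1}$. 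Alternatively, and more robustly, reverse the logic: Birkhoff plus the martingale law of large numbers already force $\tfrac{1}{t}\log\kk(x,\widetilde\omega_t,\widetilde\omega_\infty)\to\int|\nabla\log\kk|^2\,d\hh$, and then the Ancona comparison with $\G$ identifies this limit as $\hs$. Your flagged concern about the cusp boundary terms in the integration by parts is the right issue to isolate for the co-finite setting; the paper handles the analogous point later via the exhaustion $\M_\varepsilon$ in the proof of \eqref{yue}, and the same device (uniform bounds on $\nabla_y\bb$, $\nabla_y\log\kk$ from pinched curvature and Harnack) works here.
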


\begin{proof}
We only verify the second equality. 
The other equalities follow immediately from the same argument as in \cite{LedShu_2017}.
\begin{align*}
\ld 
&= \int_{\M_{0}}\int_{\partial\MM} \Delta_{y} \bb(y, x, \xi) d\nu_{y}(\xi) d\widetilde\m(y)
\\
&= \int_{\partial\MM}\int_{\M_{0}} 
\Delta_{y}\bb(y, x, \xi) \kk(x, y, \xi) 
d\widetilde\m(y)
d\nu_{x}(\xi)
\\
&= \int_{\partial\MM}\int_{\M_{0}}
\langle -\nabla_{y} \bb(y, x, \xi), \nabla_{y} \kk(x, y, \xi) \rangle_{g}
d\widetilde\m(y)
d\nu_{x}(\xi)
\\
&=
\int_{\M_{0}}
\int_{\partial\MM}
\langle -\nabla_{y} \bb(y, x, \xi), \nabla_{y}\log\kk(x, y, \xi)\rangle
d\nu_{y}(\xi)
d\widetilde\m(y).
\end{align*}
\end{proof}

\subsection{Leafwise heat equation}
We prove the contraction property on  H\"older spaces of the foliated Brownian motion.
Let $\tau>0$. We define a $\tau$-H\"older norm of $f$ in the space $\C_{b}(\T^{1}\M)$ of bounded continuous functions by
\begin{equation*}
\|f\|_{\mathcal{L}^{\tau}}
=\|f\|_{\infty}
+\sup_{x\in\M_{0}} 
\sup_{\xi,\eta\in\partial\MM}
\frac{|\widetilde{f}(x,\xi)-\widetilde{f}(x,\eta)|}{d_{\infty}^{x,\tau}(\xi, \eta)},
\end{equation*} 
and we denote the corresponding H\"older space by
\begin{equation*}
\mathcal{L}^{\tau}=\{f\in\mathcal{C}_{b}( \T^{1}\M): \|f\|_{\mathcal{L}^{\tau}}<\infty\}.
\end{equation*}
The following statement corresponds to the uniqueness of a $\Q^{t}$-invariant measure for compact negatively curved manifolds (see \cite{Led_1995}).
In \cite{Ham_1997}, it was shown that the uniqueness for the $(\Delta_{s}+Y)$-diffusion on compact negatively curved manifolds holds for a stably closed vector field $Y$ on $\T^{1}\M$ with positive pressure.

\begin{prop} 
For every $\Q^{t}$-invariant measure $\eta$ on $\T^{1}\M$ and for each $f\in\mathcal{L}^{\tau}$,
$$\int f d\eta = \int f d\hh.$$
\end{prop}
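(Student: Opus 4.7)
The plan is to show that for every $f \in \mathcal{L}^\tau$, the iterates $\Q^t f$ converge uniformly to the constant $\int f \, d\hh$ as $t \to \infty$. Once this is established, the conclusion is immediate: by $\Q^t$-invariance of the probability measure $\eta$, one has $\int f \, d\eta = \int \Q^t f \, d\eta$ for every $t > 0$, and passing to the limit on the right-hand side yields $\int f \, d\eta = \int f \, d\hh$.

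To obtain the uniform convergence $\Q^t f \to \int f \, d\hh$, I would proceed in two stages that mirror the two directions in which $\Q^t$ acts on $\T^{1}\M$. First, the contraction Theorem \ref{Ctr} gives decay of the H\"older seminorm of $\Q^t f$ in the $\xi$-variable along each leaf, so that there is a function $g_t \in \C_b(\M)$, heuristically the spherical average $g_t(x) = \int \Q^t f(x, \eta)\, d\nu_x(\eta)$, with $\|\Q^t f - g_t \circ \pi\|_\infty \to 0$ as $t \to \infty$, where $\pi : \T^{1}\M \to \M$ denotes the foot point projection. Second, $\Q^s$ applied to a function depending only on $x$ reduces via (\ref{Markovint}) to the Brownian semigroup on $\M$; by the exponential ergodicity of the Brownian motion on $\M$ established in Section 4, one has $\Q^s g_t \to \int g_t \, d\m$ uniformly as $s \to \infty$. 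The limit constant is identified using $\Q^t$-invariance of $\hh$: combining the disintegration $d\hh(x,\xi) = d\nu_x(\xi)\, d\widetilde{\m}(x)$ with the identity $\int \Q^t f \, d\hh = \int f \, d\hh$, one obtains $\int g_t \, d\m \to \int f \, d\hh$ as $t \to \infty$. A diagonal argument in $s$ and $t$ then assembles these ingredients into the desired uniform convergence $\Q^t f \to \int f \, d\hh$.

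The main obstacle is executing this two-stage contraction scheme in the finite-volume, non-compact setting: the two required tools, Theorem \ref{Ctr} and the exponential ergodicity of the Brownian motion on $\M$, both demand substantial independent work because of the cusps, and together they are precisely the contents of Section 4. Once they are in place, the combination argument closely parallels the compact case treated in \cite{Led_1995}, and uniqueness of the $\Q^t$-invariant measure tested against H\"older functions follows, identifying any such $\eta$ with $\hh$ on $\mathcal{L}^\tau$.
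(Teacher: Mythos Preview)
Your approach is correct but differs from the paper's. You deduce the proposition from Theorem \ref{Ctr} (or equivalently from its two ingredients, Proposition \ref{CtrH} and Lemma \ref{Doeblin}): once $\|\Q^{t}f-\N f\|_{\infty}\to 0$, invariance of $\eta$ gives $\int f\,d\eta=\int\Q^{t}f\,d\eta\to\int f\,d\hh$ immediately, and your two-stage description is essentially a reconstruction of the proof of Theorem \ref{Ctr}. This is logically sound since the proof of Theorem \ref{Ctr} in Section \ref{ProofCtr} does not use the uniqueness proposition, so no circularity arises; it merely inverts the order of presentation.

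The paper, by contrast, gives a self-contained argument that does \emph{not} invoke Theorem \ref{Ctr} at all. It disintegrates an arbitrary invariant $\eta$ as $d\widetilde\eta_{x}(\xi)\,d\widetilde\m(x)$, writes $\int f\,d\eta=\int_{\M_{0}}\mathbb{E}_{x}\bigl[\int\widetilde f(\widetilde\omega_{t},\xi)\,d\widetilde\eta_{x}(\xi)\bigr]\,d\widetilde\m(x)$, and then uses a purely geometric fact: for $\widetilde\omega$ going to infinity, the angle at $\widetilde\omega_{t}$ between $x$ and any fixed $\xi\neq\widetilde\omega_{\infty}$ tends to zero, so $\widetilde f(\widetilde\omega_{t},\xi)$ is uniformly close to $\widetilde f(\rv_{\widetilde\omega_{t}}^{x})$, which is independent of $\xi$ and hence of $\widetilde\eta_{x}$. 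A time-reversal symmetry (swapping the roles of $x$ and $\widetilde\omega_{t}$ via $\Gamma$-invariance of the heat kernel) then identifies the limit as $\int_{\M_{0}}\mathbb{E}_{y}[\widetilde f(y,\widetilde\omega_{\infty})]\,d\widetilde\m(y)=\int f\,d\hh$. What this buys is a proof that stands on its own before the heavier analytic work of Section \ref{ProofCtr}, and that works for any invariant $\eta$ without needing the quantitative exponential rate; what your route buys is brevity once Theorem \ref{Ctr} is in hand.
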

\begin{proof}
If $\eta$ is a $\Q^{t}$-invariant measure on $\T^{1}\M$, its $\Gamma$-invariant lift $\widetilde\eta$ to $\T^{1}\MM$ is disintegrated into $d\widetilde\eta(x,\xi)=d\widetilde\eta_{x}(\xi)d\widetilde\m(x)$ over the fibration $\T^{1}\MM=\MM\times\partial\MM$ where $\widetilde\eta_{x}$ are the conditional measures on the unit spheres $\T_{x}\MM=\{x\}\times\partial\MM$ of $\widetilde\eta$ (\cite{Garnett_1983}). 
As in the proof of Proposition \ref{transversalerg}, we can consider $\widetilde\eta_{x}$ as a probability measure of the union of local leaves; for some sufficiently small $\delta>0$,
\begin{equation}
\widetilde\eta_{x}(A):= \widetilde\eta( \cup_{\rw\in A} W_{\delta}^{s}(\rw)) / \widetilde\eta(\cup_{\rv \in \T^{1}_{x}\MM}W_{\delta}^{s}(\rv)).
\end{equation}
We denote by $\mathbb{E}_{x}$ the expectation with respect to $\mathbb{P}_{x}$. 
From the $\Q^{t}$-invariance, we have
\begin{align*}
\int_{\T^{1}\M}f d\eta
&=
\int_{\M_{0}}
\int_{\partial\MM} 
\Q^{t} f(x,\xi)
\,
d\widetilde\eta_{x}(\xi)
\,
d\widetilde\m(x)\\
&=
\int_{\M_{0}}
\int_{\partial\MM}
\int_{\MM}
\wp(t,x,y) 
\widetilde{f}(y, \xi)
\,
d\vol(y)
\,
d\widetilde\eta_{x}(\xi)
\,
d\widetilde\m(x)\\
&=
\int_{\M_{0}}
\mathbb{E}_{x}
\left[
\int_{\partial\MM} \widetilde{f}(\widetilde\omega_{t}, \xi) 
\,
d\widetilde\eta_{x}(\xi)
\right]
\,
d\widetilde\m(x).
\end{align*}
Note that given $\varepsilon>0$, $x\in\M_{0}$ and $f\in\mathcal{L}^{\tau}$, 
there is $\theta_{0}=\theta_{0}(\varepsilon)>0$ for every $y\in\MM$ and $\xi, \eta \in \partial\MM$ with $\angle_{y}(\xi, \eta)<\theta_{0}$,
$
|\widetilde{f}(y, \xi)-\widetilde{f}(y, \eta)|
< 
\varepsilon.$
Given $\xi \in \partial\MM$, we set for $T, \theta >0$,
\begin{align*}
\Upsilon(x, \xi, \theta)
&:= \left\{ \widetilde\omega: \angle_{x}( \widetilde\omega_{\infty}, \xi ) <\frac{\theta}{3} \right\},\\
\Xi(x, T, \theta) 
&:= \left\{ \widetilde\omega: \dist(x, \widetilde\omega_{t}) \ge \frac{\ld}{2}t, \angle_{x}( \theta (\widetilde\omega, t), \theta(\widetilde\omega, \infty) ) < \frac{\theta}{3}, \, \forall t \ge T \right\}.
\end{align*}
Then if $\theta\in(0,\theta_{0})$ is small enough, than for any $x$ and $\xi$, $\mathbb{P}_{x} (\Upsilon(x, \xi, \theta) ) <\frac{\varepsilon}{2\|f\|_{\infty}}$ (by \cite{BenHul_2019}). 
Choose such a small $\theta$. 
There is $T_{0}=T_{0}(x, \theta)$ such that if $t>T_{0}$, 
$|\widetilde{f}(\widetilde\omega_{t}, \xi) -\widetilde{f} (\rv_{\widetilde\omega_{t}}^{x})|<\frac{\varepsilon}{2\|f\|_{\infty}}$ for each $\widetilde\omega\in\Xi(x, t, \theta) \setminus \Upsilon(x, \xi ,\theta)$ and
$\mathbb{P}_{x} (\Xi(x, t, \theta) ) >1-\frac{\varepsilon}{2\|f\|_{\infty}}$. 
Hence if $t>T_{0}$, then
\begin{align*}
&\left|
\mathbb{E}_{x}
\left[
\int_{\partial\MM} 
\widetilde{f}(\widetilde\omega_{t}, \xi) 
-
\widetilde{f} (\rv_{\widetilde\omega_{t}}^{x})
\,
d\widetilde\eta_{x}(\xi)
\right]
\right|\\
&\le
\int_{\partial\MM} 
\mathbb{E}_{x} 
\left[ 
\left| 
\widetilde{f}(\widetilde\omega_{t}, \xi) -\widetilde{f} (\rv_{\widetilde\omega_{t}}^{x})
\right| 
\left(
\mathbf{1}_{\Xi(x, t, \delta) \setminus \Upsilon(x, \xi, \theta)}
+
\mathbf{1}_{\Xi(x, t, \delta)\cap \Upsilon(x, \xi, \theta)}
+\mathbf{1}_{\Xi(x, t, \delta)^{c}}
\right)
\right]
d \widetilde\eta_{x}(\xi)
\\
&\le
\varepsilon
+2\|f\|_{\infty}
\left(
\int_{\partial\MM}
\mathbb{P}_{x}
\left[
\Upsilon(x, \xi, \theta)
\right]
d \widetilde\eta_{x}(\xi)
+
\mathbb{P}_{x}
\left[
\Xi(x, t, \delta)^{c}
\right]
\right)\\
&<
 3\varepsilon.
\end{align*}
Since $\phi_{t}(x):= \mathbb{E}_{x}\left[ \int \widetilde{f}(\widetilde\omega_{t}, \cdot)d\eta_{x}\right]$ and $\psi_{t}(x):=\mathbb{E}_{x}\left[\widetilde{f}(\rv_{\widetilde\omega_{t}}^{x})\right]$ are bounded by $\|f\|_{\infty}$, it follows that $\phi_{t} - \psi_{t} \to 0$ as $t\to \infty$ in $L^{1}(\M_{0}, \widetilde\m)$ and hence $ \underset{t\to\infty}{\lim}\int \psi_{t} d\widetilde\m = \int f d\eta$.
Thus we have
\begin{align*}
\int f d\eta
=\lim_{t\to\infty}
\int_{\M_{0}}
\mathbb{E}_{x}
\left[
\int_{\partial\MM} \widetilde{f}(\widetilde\omega_{t}, \xi) 
\,
d\widetilde\eta_{x}(\xi)
\right]
\,
d\widetilde\m(x)
=
\lim_{t\to\infty}
\int_{\M_{0}}
\mathbb{E}_{x}
\left[
\widetilde{f}(\rv_{\widetilde\omega_{t}}^{x})
\right]
\,
d\widetilde\m(x).
\end{align*}
From the $\Gamma$-invariance of $\widetilde{f}$ and the heat kernel, it follows that
\begin{align*}
\int_{\M_{0}}
\mathbb{E}_{x}
\left[
\widetilde{f}(\rv_{\widetilde\omega_{t}}^{x})
\right]
\,
d\widetilde\m(x)
&=
\int_{\M_{0}}
\int_{\M_{0}}
\sum_{\gamma\in\Gamma}
\wp(t,x,\gamma y)
\widetilde{f}(\rv_{\gamma y}^{x})
\,
d\vol(y)
\,
d\widetilde\m(x)\\
&=
\int_{\M_{0}}
\int_{\M_{0}}
\sum_{\gamma\in\Gamma}
\wp(t,y, \gamma^{-1} x)
\widetilde{f}(\rv_{y}^{\gamma^{-1}x})
\,
d\vol(x)
\,
d\widetilde\m(y)\\
&=
\int_{\M_{0}}
\mathbb{E}_{y}
\left[
\widetilde{f}(\rv_{y}^{\widetilde\omega_{t}})
\right]
\,
d\widetilde\m(y)
.
\end{align*}
Letting $t$ tend to infinity, we have
\begin{align*}
\int f d\eta
&=
\int_{\M_{0}}
\mathbb{E}_{y}
\left[
\widetilde{f}(y, \widetilde\omega_{\infty})
\right]
\,
d\widetilde\m(y)\\
&=
\int_{M_{0}}
\int_{\partial\MM}
\widetilde{f}(y, \xi)
\,
d\nu_{y}(\xi)
\,
d\widetilde\m(y).
\end{align*}
Therefore, $\int f d\eta = \int f d\hh$.
\end{proof}

We denote by $\N$ the integration operator on $\C_{b}(\T^{1}\M)$: 
\begin{equation*}
\N(f)
:=
\int_{\T^{1}\M}
f
\,d \hh.
\end{equation*}
The Markov operator $\Q^{t}$ converges to $\N$ on $\LL^{\tau}$.
Furthermore the following theorem shows the rate of convergence is exponentially fast. 
We postpone the proof until Section \ref{ProofCtr}.

\begin{thm}\label{Ctr}
$\mathcal{Q}^{t}: \mathcal{L}^{\tau}\to\mathcal{L}^{\tau}$ defines a one-parameter semigroup of continuous operators for small enough $\tau>0$. Furthermore, there is $C=C(\tau)>0$ such that for every $t>0$,
\begin{equation*}
\| \mathcal{Q}^{t}-\mathcal{N}\|_{\mathcal{L}^{\tau}}
\le 
e^{-Ct}.
\end{equation*}
\end{thm}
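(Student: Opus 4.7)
The plan is to prove two separate decay estimates---an exponential contraction of the H\"older seminorm in the boundary variable $\xi$ and an exponential decay of the sup-norm of $\mathcal{Q}^t f - \mathcal{N}(f)$---and combine them to bound the full operator norm on $\LL^\tau$.

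For the H\"older seminorm, the central identity is
$$(\xi|\eta)_y - (\xi|\eta)_x = \bb(y, x, \xi) + \bb(y, x, \eta),$$
which follows immediately from the definitions. It gives
$$|\mathcal{Q}^t f(x,\xi) - \mathcal{Q}^t f(x,\eta)| \le \|f\|_{\LL^\tau}\, d_\infty^{x,\tau}(\xi,\eta)\, \mathbb{E}_x\!\left[ e^{-\tau \bb(\widetilde\omega_t, x, \xi) - \tau \bb(\widetilde\omega_t, x, \eta)} \right],$$
so it suffices to bound the expectation. By Cauchy--Schwarz I reduce to estimating $\mathbb{E}_x\!\left[e^{-2\tau \bb(\widetilde\omega_t, x, \xi)}\right]$, and for this I use a Feynman--Kac/supermartingale argument: setting $u(y) := e^{-2\tau \bb(y, x, \xi)}$, a direct computation gives $\Delta u = (4\tau^2 - 2\tau \Delta \bb)\,u$, and Proposition \ref{C2conv} together with the Laplacian comparison theorem yields $\Delta \bb \ge (d-1)a > 0$ uniformly on $\MM$. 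Hence for $0 < \tau < (d-1)a/2$ one has $\Delta u \le \lambda u$ with some $\lambda = \lambda(\tau) < 0$, and a standard Gronwall argument gives $\mathbb{E}_x[u(\widetilde\omega_t)] \le e^{\lambda t}$. Assembling the pieces produces $[\mathcal{Q}^t f]_{\LL^\tau} \le e^{-C\tau t}\|f\|_{\LL^\tau}$ for some $C > 0$. This is the direct analogue of the contraction in \cite{Led_1995}, with the $\C^2$-convergence in Proposition \ref{C2conv} and the uniform lower bound $\Delta \bb \ge (d-1)a$ it yields replacing the double-process estimate that is unavailable without co-compactness.

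For the sup-norm estimate, the H\"older contraction already implies that $\mathcal{Q}^t f(x, \cdot)$ is nearly constant on $\partial\MM$ with oscillation at most $e^{-C\tau t}\|f\|_{\LL^\tau}$ for each $x$. Setting $\phi_t(x) := \int_{\partial\MM} \mathcal{Q}^t f(x, \xi)\, d\nu_x(\xi)$, I split $\mathcal{Q}^{2t} = \mathcal{Q}^t \mathcal{Q}^t$ and approximate the inner $\mathcal{Q}^t f$ by $\phi_t$ at cost $e^{-C\tau t}\|f\|_{\LL^\tau}$. Since $\phi_t$ depends only on $x$, formula (\ref{Markovint}) and summation over $\Gamma$ reduce the outer $\mathcal{Q}^t$ to the heat semigroup of the base Brownian motion on $\M$. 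Invariance of $\hh$ under $\mathcal{Q}^t$ identifies $\int \phi_t\, d\widetilde\m$ with $\mathcal{N}(f)$, and the exponential ergodicity of the base Brownian motion established in Section \ref{ProofCtr} then delivers the desired sup-norm decay.

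I expect the main obstacle to be precisely this exponential ergodicity of the base Brownian motion. In Ledrappier's co-compact setting, mixing of the heat semigroup on $\M$ is a routine consequence of compactness and the spectral gap, but on the finite-volume quotient the cusps allow arbitrarily long Brownian excursions and a spectral gap on $L^2(\M)$ is not automatic. Extracting the required mixing rate requires a sharp diagonal heat-kernel estimate built on the comparison of Proposition \ref{HCom} together with the pinched curvature hypothesis, and it is precisely this technical heart that the author postpones to Section \ref{ProofCtr}.
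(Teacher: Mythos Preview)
Your proposal is correct and follows the paper's overall architecture: first contract the H\"older seminorm in the boundary variable, then combine this with the exponential ergodicity of the base Brownian motion on $\M$ (Lemma~\ref{Doeblin}) to control the sup-norm, exactly as in the paper's final proposition in Section~\ref{ProofCtr}. Your diagnosis that the uniform diagonal heat-kernel bound in the cusps is the technical heart of the non-compact case is accurate.

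Where you diverge from the paper is in the mechanism for the H\"older contraction. The paper (Proposition~\ref{CtrH}) Taylor-expands $e^{-\tau R}$ to first order, bounds the linear term via Lemma~\ref{ctrlem}, namely $\mathbb{E}_x[(\xi|\eta)_{\widetilde\omega_T}-(\xi|\eta)_x]\ge(d-1)aT$, controls the quadratic remainder by the heat-kernel comparison (Proposition~\ref{HCom}), and then iterates via the Markov property to convert ``$<1$ at a fixed time $T$'' into exponential decay. You instead run a direct supermartingale argument on $u=e^{-2\tau\bb}$: the identity $\Delta u=(4\tau^2-2\tau\Delta\bb)u$ together with $\Delta\bb\ge(d-1)a$ gives $\Delta u\le\lambda u$ with $\lambda<0$ for small $\tau$, and Gronwall delivers $\mathbb{E}_x[u(\widetilde\omega_t)]\le e^{\lambda t}$ in one step. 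Both routes rest on the same geometric input, the uniform lower bound $\Delta\bb\ge(d-1)a$ obtained from Rauch comparison and the $\C^2$-convergence of Proposition~\ref{C2conv}, and both require $\tau$ small. Your argument is cleaner and avoids the iteration; the small price is that $u$ is unbounded (it blows up as $y$ escapes opposite to $\xi$), so the Gronwall/supermartingale step needs a word of justification by localization, which is routine since $|\bb(y,x,\xi)|\le\dist(x,y)$ and $\mathbb{E}_x[e^{c\,\dist(x,\widetilde\omega_t)}]<\infty$ for small $c$ by Proposition~\ref{HCom}. The paper's Taylor route sidesteps this by keeping all integrands bounded.
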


Given $f\in \LL^{\tau}$, if $\int f d\hh=0$, then the $\LL^{\tau}$-limit of $\int_{0}^{T}\Q^{t}f dt$ exists by the contraction property. 
The limit $u:= \lim_{T\to\infty}\int_{0}^{T} \Q^{t}f dt$ is a weak solution of the leafwise heat equation $\Delta_{s} u = -f$, thus a strong solution in $\LL^{\tau}$. 
Since a leafwise harmonic $u$ is $\Q^{t}$-invariant, the uniqueness also follows from the contraction property (See \cite{Led_1995} for the detail).
Therefore we obtain the following corollary.

\begin{coro} \label{Heq}
For small enough $\tau>0$ and every $f\in\LL^{\tau}$ with $\int fd\hh=0$, 
there exists a solution $u \in \LL^{\tau}$ to the leafwise heat equation $\Delta_{s} u=-f$ 
which is unique up to additive constants. 
In addition, $u$ is $\mathcal{C}^{2}$ along the stable leaves.
\end{coro}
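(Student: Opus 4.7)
The plan is to define $u$ as the improper integral $u := \int_{0}^{\infty} \Q^{t} f \, dt$ and check that it has all the required properties. The key input is Theorem~\ref{Ctr}: since $\N(f) = \int f \, d\hh = 0$, the contraction estimate gives $\|\Q^{t} f\|_{\LL^{\tau}} = \|(\Q^{t} - \N) f\|_{\LL^{\tau}} \le e^{-Ct}\|f\|_{\LL^{\tau}}$, so the integral converges absolutely in $\LL^{\tau}$ and defines $u \in \LL^{\tau}$.

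Next I would verify $\Delta_{s} u = -f$. Because $\Delta_{s}$ is the infinitesimal generator of the semigroup $\Q^{t}$, one has $\tfrac{d}{dt}\Q^{t} f = \Delta_{s} \Q^{t} f$ leafwise, whence $\Q^{T} f - f = \int_{0}^{T} \Delta_{s} \Q^{t} f \, dt = \Delta_{s} \int_{0}^{T} \Q^{t} f \, dt$, the last equality being justified leafwise because $\Q^{t} f$ is smooth along stable leaves by the smoothness of $\wp$. Letting $T \to \infty$, the left-hand side tends to $-f$ by Theorem~\ref{Ctr}, while the right-hand side tends to $\Delta_{s} u$, yielding $\Delta_{s} u = -f$ weakly along leaves.

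For uniqueness up to additive constants, suppose $u_{1}, u_{2} \in \LL^{\tau}$ both solve the equation; then $v := u_{1} - u_{2}$ is leafwise harmonic, hence $\Q^{t} v = v$ for every $t\ge 0$, and Theorem~\ref{Ctr} forces $v = \lim_{t\to\infty} \Q^{t} v = \N(v)$, which is a constant.

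Finally, for $\C^{2}$-regularity along stable leaves, I would appeal to leafwise elliptic regularity. On each stable leaf $\widetilde\W^{s}(\rv)$, which is isometric to $\MM$ via $\pi$, the operator $\Delta_{s}$ coincides with the Laplace-Beltrami operator of $\MM$, and is uniformly elliptic with smooth coefficients; the source $-f$ is bounded and continuous. Standard interior Schauder or $L^{p}$ estimates applied leafwise therefore upgrade the weak solution $u$ to a classical $\C^{2}$ solution along each stable leaf. The main obstacle, as I see it, is the rigorous justification of interchanging $\Delta_{s}$ with the improper integral in the construction of $u$; this requires uniform-in-$t$ leafwise gradient and Hessian bounds on $\Q^{t} f$, which should follow from heat kernel derivative estimates available in the pinched negative curvature setting together with Proposition~\ref{HCom}.
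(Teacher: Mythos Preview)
Your proposal is correct and follows essentially the same approach as the paper: define $u = \int_{0}^{\infty} \Q^{t} f\, dt$ via Theorem~\ref{Ctr}, and obtain uniqueness from the fact that a bounded leafwise harmonic function is $\Q^{t}$-invariant and hence constant by the contraction. The paper (deferring to \cite{Led_1995}) sidesteps your interchange concern by first observing that $u$ is a \emph{weak} solution of $\Delta_{s} u = -f$ and then invoking leafwise elliptic regularity to upgrade it to a classical $\C^{2}$ solution along stable leaves.
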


Let $\alpha: \T^{1}\M\to E^{s*}$ be a continuous section of the dual bundle $E^{s*}$ of the stable distribution $E^{s}$ of $\T^{1}\M$ and $\widetilde\alpha : \T^{1}\MM \to \widetilde{E}^{s}$ be the lift of $\alpha$.
The section $\alpha$ is called a \textit{leafwise closed 1-form} of class $\C^{1}$ if 
$\widetilde\alpha|_{\widetilde\W^{s}(\rv)}$ is a closed 1-form on $\widetilde\W^{s}(\rv)$ of class $\C^{1}$ for any $\rv\in\T^{1}\MM$.
For each $(x, \xi)\in \T^{1}\MM$, since $\widetilde\W^{s}(x,\xi)=\MM\times\{\xi\}$ is diffeomorphic to $\MM$,
there is a 1-form $\widetilde\alpha^{\xi}$ on $\MM$ which agrees with the pull-back of $\widetilde\alpha|_{\widetilde\W^{s}(x,\xi)}$.
Furthermore, if $\alpha$ is a leafwise closed 1-form of class $\C^{1}$, then there exists $A^{\xi}\in\C^{1}(\MM)$ such that $dA^{\xi}=\widetilde\alpha^{\xi}$.
Hence if $\alpha$ is a leafwise closed 1-form of class $\C^{1}$, 
we define for each foliated Brownian path $\omega \in \T^{1}\Omega$ starting from $(x,\xi)\in\T^{1}\M$,
\begin{equation*}
\int_{\omega_{0}}^{\omega_{t}}\alpha:=A^{\xi}(\widetilde\omega_{t})-A^{\xi}(\widetilde\omega_{0})
\end{equation*}
for every $t\ge0$, where $\widetilde\omega$ is a Brownian path on $\T^{1}\MM$ such that
$(\widetilde\omega_{t}, \xi)\in \T^{1}\MM$ is a lift of $\omega_{t}$.

We denote by $\delta_{s}$ the leafwise codifferential $g_{s}$-dual to $-\mathrm{div}_{s}$, 
that is, $\delta_{s}\alpha = -\mathrm{div}_{s}\alpha^{\#}$ where $\alpha^{\#}:\T^{1}\M\to E^{s}$ is the continuous section $g_{s}$-dual to $\alpha$. 
Since 
\begin{equation*}
\delta_{s}\widetilde\alpha(x,\xi)
= 
-\mathrm{div}_{s}\widetilde\alpha^{\#}(x, \xi)
=
-\mathrm{div}\nabla A^{\xi}(x)
=
-\Delta A^{\xi}(x) 
=
-\Delta_{s} A(x, {\xi}), 
\end{equation*}
by It\^o's formula (see Chapter 3 of \cite{Hsu}), 
\begin{equation} \label{ito}
\mathbf{X}_{t}(\omega)
=\int_{\omega_{0}}^{\omega_{t}}\alpha+\int_{0}^{t}\delta_{s}\alpha(\omega_{r})dr
=A^{\xi}(\widetilde\omega_{t})-A^{\xi}(\widetilde\omega_{0})-\int_{0}^{t} \Delta A^{\xi}(\widetilde\omega_{r})dr
\end{equation}
is a martingale on $(\T^{1}\Omega, \{\Fi_{t}(\T^{1}\M)\}_{0\le t \le\infty}, \mathbb{P}_{\hh})$ having the quadratic variation 
\begin{equation*}
d\langle \mathbf{X}, \mathbf{X}\rangle_{t}(\omega)=(\Delta(A^{\xi})^{2}-2A^{\xi}\Delta A^{\xi})(\widetilde\omega_{t})dt=2\|\alpha^{\#}(\omega_{t})\|^{2}dt.
\end{equation*}
If $\beta$ is a leafwise closed 1-form of class $\C^{1}$ such that $\delta_{s}\beta$ is H\"older continuous on $\T^{1}\M$, applying Corollary \ref{Heq}, there is 
$u\in\mathcal{L}^{\tau}$
 such that 
 $\Delta_{s}u=\delta_{s}\beta-\int \delta_{s}\beta d\hh$.
 Hence, due to the equation (\ref{ito}) for $\alpha= \beta+du$, we have a martingale
 \begin{equation}\label{itohol}
 \mathbf{X}_{t}
 =\int_{\omega_{0}}^{\omega_{t}}(\beta+du)+\int_{0}^{t}\delta_{s}(\beta+du)(\omega_{r})dr
 =\int_{\omega_{0}}^{\omega_{t}}\beta +u(\omega_{t})-u(\omega_{0})+t\int\delta_{s}\beta d\hh
 \end{equation}
with the quadratic variation 
$\langle \mathbf{X}, \mathbf{X} \rangle_{t}(\omega)=2\int_{0}^{t}\|\alpha^{\#}+\nabla u\|^{2}(\omega_{t})ds$.

\subsection{Proof of Theorem \ref{CLT}}
For $(x,\xi)\in\T^{1}\MM$, let $B(x, \xi):=\bb(x, x_{0}, \xi)$, $K(x, \xi):=\log \kk(x_{0}, x, \xi)$. 
Note that 
\begin{equation*}
\Delta_{s}B(x, \xi) = \Delta_{x}\bb(x, x_{0}, \xi)
\end{equation*}
is H\"older continuous due to uniform bounds of the first derivatives of curvature. On the other hand, 
\begin{equation*}
\Delta_{s} K(x, \xi) = -\| \nabla_{x} \log \kk(x_{0}, x, \xi)\|^{2}
\end{equation*}
is H\"older contunous due to \cite{AndSch_1985}, \cite{Hamenst_dt_1990}.
By Corollary \ref{Heq} for $f= \Delta_{s}B$, $\Delta_{s}K$ 
there exist $u_{\bb}, u_{\kk} \in\mathcal{L}^{\tau}$ 
for which we obtain square-integrable martingales 
\begin{align*}
\mathbf{B}_{t}(\omega) 
= \bb(\widetilde\omega_{t}, \widetilde\omega_{0}, \xi) 
- t\ld 
+u_{\bb}(\omega_{t}) 
-u_{\bb}(\omega_{0}),
\, \, 
\mathbf{K}_{t}(\omega) 
=  \log \kk(\widetilde\omega_{0}, \widetilde\omega_{t}, \xi) 
+t\hs 
+u_{\kk}(\omega_{t}) 
-u_{\kk}(\omega_{0}), 
\end{align*}
for $\omega\in\T^{1}\Omega$ with a lift $(\widetilde\omega, \xi)\in \T^{1}\MM$,
by the It\^o formula (\ref{itohol}) for 
$\beta_{(x, \xi)}=dB^{\xi}_{x}$, $dK^{\xi}_{x}$, respectively.
Their quadratic variations are
\begin{align} \label{quad}
\langle \mathbf{B}, \mathbf{B} \rangle_{t}(\omega) 
=
2\int_{0}^{t}\| \nabla B +\nabla u_{\bb} \|^{2}(\omega_{s})ds,
\,\,
\langle\mathbf{K}, \mathbf{K}\rangle_{t}(\omega) 
=
2\int_{0}^{t}\| \nabla K + \nabla u_{\kk} \|^{2}(\omega_{s})ds. 
\end{align}

We denote by $\mathbb{E}_{(x, \xi)}$ the expectation with respect to $\mathbb{P}_{(x, \xi)}$.
From the equalities (\ref{quad}) of quadratic variations,
\begin{align*}
\mathbb{E}_{(x,\xi)}
\left[
\frac{1}{t}
\langle \mathbf{B}, \mathbf{B} \rangle_{t}(\omega)
\right]
=\frac{2}{t}\int_{0}^{t} \Q^{s}\| \nabla B +\nabla u_{\bb} \|^{2}(x, \xi) ds,
\\
\mathbb{E}_{(x,\xi)}
\left[
\frac{1}{t}
\langle \mathbf{K}, \mathbf{K} \rangle_{t}(\omega) 
\right]
=\frac{2}{t}\int_{0}^{t} \Q^{s}\| \nabla K +\nabla u_{\kk} \|^{2}(x, \xi) ds. 
\end{align*}
Due to the ergodicity of $\hh$, for $\hh$-a.e. $(x, \xi)$,
\begin{align}
\label{QV1}
\lim_{t\to\infty}
\mathbb{E}_{(x, \xi)}
\left[
\frac{1}{t}
\langle \mathbf{B}, \mathbf{B} \rangle_{t}(\omega)
\right]
&=
2\int 
\|
\nabla B + \nabla u_{\bb}
\|^{2}
d\hh,
\\
\label{QV2}
\lim_{t\to\infty}
\mathbb{E}_{(x, \xi)}
\left[
\frac{1}{t}
\langle \mathbf{K}, \mathbf{K} \rangle_{t}(\omega) 
\right]
&=
2\int 
\|
\nabla K + \nabla u_{\kk}
\|^{2} 
d\hh. 
\end{align}
Using Markov property, we have 
\begin{align*}
\mathbb{E}_{(x, \xi)}\left[ \frac{1}{t+1} \langle \mathbf{M}, \mathbf{M} \rangle_{t+1} \right]
&=\mathbb{E}_{(x, \xi)}
\left[ 
\frac{t}{t+1} \mathbb{E}_{\omega_{1}} \left[ \frac{1}{t} \langle \mathbf{M}, \mathbf{M} \rangle_{t} \right] 
\right]\\
&=\frac{t}{t+1} \mathbb{E}_{(x, \xi)} \left[ \frac{1}{t} \int_{0}^{t}\Q^{r}F (\omega) dr \right].
\end{align*}
for $\mathbf{M} = \mathbf{B}$ or $\mathbf{K}$ and $F = 2\| \nabla B +\nabla u_{\bb} \|^{2}$ or $2\| \nabla K +\nabla u_{\kk} \|^{2}$, respectively.
Given $x\in\MM$, for $\nu_{x}$-a.e. $\xi$ and $\mathbb{P}_{(x, \xi)}$-a.e. $\omega$,
\begin{equation*}
\lim_{t\to\infty} \frac{1}{t}\int_{0}^{t}\Q^{r}F(\omega) dr = \int F d\hh.
\end{equation*}
Hence for each $x$, there is $\xi$ for which we have the limits (\ref{QV1}) and (\ref{QV2}).
We denote the square root of the limits by $\sigma_{\bb}$ and $\sigma_{\kk}$, respectively.
Note that both of $\sigma_{\bb}, \sigma_{\kk}$ are positive 
since $B$ and $K$ are unbounded 
while $u_{\bb}$ and $u_{\kk}$ are bounded.
We have $\sigma_{\bb}, \sigma_{\kk} <\infty $ since both of $2\| \nabla B +\nabla u_{\bb} \|^{2}$ or $2\| \nabla K +\nabla u_{\kk} \|^{2}$ are bounded.
Thus for every $x$, there is $\xi$ such that the distributions of $\frac{\mathbf{B}_{t} }{\sigma_{\bb}\sqrt{t}}$ and $\frac{\mathbf{K}_{t}}{\sigma_{\kk}\sqrt{t}}$ under $\mathbb{P}_{(x, \xi)}$ converge to $N(0,1)$ as $t\to\infty$ due to the following lemma :

\begin{lem} (\cite{Hel_1982})
Let $(M_{t})_{0\le t\le\infty}$ be a continuous, centered, square-integrable martingale on a filtered probability space with stationary increments. 
If $M_{0}=0$ and there is $\sigma>0$ such that $\lim_{t\to\infty}\mathbb{E}[ |\frac{1}{t}\langle M, M\rangle_{t}-\sigma^{2}| ] = 0$, 
then the distribution of $\frac{1}{\sigma\sqrt{t}}M_{t}$ is asymptotically normal.
\end{lem}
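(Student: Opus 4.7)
The plan is to apply the Dambis--Dubins--Schwarz representation theorem, which writes any continuous centered square-integrable martingale starting at zero as a time-changed Brownian motion: there is a standard Brownian motion $W$, possibly on an enlargement of the filtered space, such that $M_t=W_{\langle M,M\rangle_t}$ for all $t\ge 0$, provided $\langle M,M\rangle_\infty=\infty$ almost surely. The hypothesis $\mathbb{E}\bigl[|t^{-1}\langle M,M\rangle_t-\sigma^2|\bigr]\to 0$ gives $t^{-1}\langle M,M\rangle_t\to \sigma^2$ in probability, and combined with the monotonicity of $t\mapsto\langle M,M\rangle_t$ this forces $\langle M,M\rangle_\infty=\infty$ almost surely, so the DDS representation is valid on all of $\mathbb{R}_+$.

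With the representation in hand, write $\frac{M_t}{\sigma\sqrt{t}}=\frac{W_{\langle M,M\rangle_t}}{\sigma\sqrt{t}}$ and compare this to the reference variable $\frac{W_{\sigma^2 t}}{\sigma\sqrt{t}}$, which has the exact distribution $N(0,1)$ for every $t$ by Brownian scaling. By Slutsky's theorem the proof reduces to showing that the error $\Delta_t:=\frac{W_{\langle M,M\rangle_t}-W_{\sigma^2 t}}{\sigma\sqrt{t}}$ tends to $0$ in probability. Introducing $\tau_t:=\langle M,M\rangle_t/(\sigma^2 t)$, which converges to $1$ in probability by hypothesis, and the rescaled Brownian motion $B^{(t)}_s:=W_{\sigma^2 t s}/(\sigma\sqrt{t})$, which is a standard Brownian motion for each fixed $t$, we obtain $\Delta_t=B^{(t)}_{\tau_t}-B^{(t)}_1$.

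For the final step I would fix $\delta,\varepsilon>0$, use Doob's $L^2$ maximal inequality to pick $\eta=\eta(\delta,\varepsilon)>0$ with $\mathbb{P}\bigl(\sup_{|s-1|\le \eta}|B^{(t)}_s-B^{(t)}_1|>\delta\bigr)<\varepsilon$ uniformly in $t$, and then invoke $\mathbb{P}(|\tau_t-1|>\eta)\to 0$ to conclude $\limsup_{t\to\infty}\mathbb{P}(|\Delta_t|>\delta)\le \varepsilon$. The main subtlety, in my view, is that the hypothesis controls only the normalized quadratic variation in $L^1$, so one cannot replace $\tau_t$ by a deterministic quantity inside $W$ and must really route through a continuous-mapping or modulus-of-continuity argument for Brownian motion; the stationarity-of-increments hypothesis is convenient but not central to this argument, its role being essentially to identify $\sigma^2$ as $\mathbb{E}[M_1^2]$ via $\mathbb{E}[\langle M,M\rangle_t]=\sigma^2 t$.
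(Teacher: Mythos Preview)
The paper does not prove this lemma; it is quoted from Helland's 1982 survey on martingale central limit theorems and invoked as a black box in the proof of Theorem~\ref{CLT}. So there is no ``paper's own proof'' to compare against.

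Your argument is correct and is essentially the standard route in the continuous-time case via the Dambis--Dubins--Schwarz representation. The one place a reader might worry is the dependence between the random time $\tau_t$ and the rescaled Brownian motion $B^{(t)}$, but your decomposition
\[
\mathbb{P}(|\Delta_t|>\delta)\le \mathbb{P}(|\tau_t-1|>\eta)+\mathbb{P}\Bigl(\sup_{|s-1|\le\eta}|B^{(t)}_s-B^{(t)}_1|>\delta\Bigr)
\]
handles this cleanly, since on $\{|\tau_t-1|\le\eta\}$ one has the pointwise bound $|B^{(t)}_{\tau_t}-B^{(t)}_1|\le\sup_{|s-1|\le\eta}|B^{(t)}_s-B^{(t)}_1|$, and the second probability depends only on the law of a standard Brownian motion, hence is uniform in $t$. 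Your remark that the stationary-increments hypothesis is inessential to this particular argument is also correct: the $L^1$ convergence of $t^{-1}\langle M,M\rangle_t$ to $\sigma^2$ is all that is used.
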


Let $W^{\ld}_{t}(\omega):=\dist (\widetilde\omega_{0}, \widetilde\omega_{t})-t\ld$.
Since the distribution of $W^{\ld}_{t}$ under $\mathbb{P}_{(x, \xi)}$ and the distribution of $Y^{\ld}_{t}$ under $\mathbb{P}_{x}$ coincide, 
it is enough to show that $W^{\ld}_{t}$ and $\mathbf{B}_{t}$ have the same $\mathbb{P}_{(x, \xi)}$-distribution.
For  $\mathbb{P}_{(x, \xi)}$-a.e. 
$\omega$ and a lift $(\widetilde\omega, \xi)$, since 
$B(\omega_{t})-B(\omega_{0}) - \dist(\widetilde\omega_{0}, \widetilde\omega_{t})
=
\bb(\widetilde\omega_{t}, \widetilde\omega_{0}, \xi)
-
\dist(\widetilde\omega_{0}, \widetilde\omega_{t}) \to -2(\xi|\widetilde\omega_{\infty})_{\widetilde\omega_{0}}$
and $|(\xi|\widetilde\omega_{\infty})_{\widetilde\omega_{0}}|<\infty$,
\begin{equation*}
\lim_{t\to\infty} 
\frac{1}{\sigma_{\bb}\sqrt{t}}
\left[
B(\omega_{t})-B(\omega_{0})-\dist(\widetilde\omega_{0}, \widetilde\omega_{t})
\right]
=0.
\end{equation*}
Hence the distribution of 
$\frac{1}{\sigma_{\bb}\sqrt{t}}W^{\ld}_{t}$
under $\mathbb{P}_{(x, \xi)}$
also converges to the normal distribution since
\begin{align*}
W^{\ld}_{t}(\omega)
=
&
\left[
\dist(\widetilde\omega_{0}, \widetilde\omega_{t}) 
-B(\omega_{t})+B(\omega_{0})
\right]
-
\left[
u_{\bb}(\omega_{t})
-u_{\bb}(\omega_{0})
\right]
+
\mathbf{B}_{t}(\omega),
\end{align*}
and
\begin{equation*}
\frac{1}{\sigma_{\bb}\sqrt{t}}
\left|
u_{\bb}(\omega_{t})-u_{\bb}(\omega_{0})
\right|
\le
\frac{2}{\sigma_{\bb}\sqrt{t}}\|u_{\bb}\|_{\infty}
\to
0,
\textrm{ as }t\to\infty. 
\end{equation*}

Let $W^{\hs}_{t}(\omega):= \log \G(\widetilde\omega_{0}, \widetilde\omega_{t}) + t\hs$.
Since the $\mathbb{P}_{(x, \xi)}$-distribution of $W^{\hs}_{t}$ and the $\mathbb{P}_{x}$-distribution of $Y^{\hs}_{t}$ are the same, to verify that 
$\frac{1}{\sigma_{\kk}\sqrt{t}}
W^{\hs}_{t}$ 
is asymptotically normal,
it is sufficient to show that for $\mathbb{P}_{(x, \xi)}$-a.e. $\omega$ with a lift $(\widetilde\omega, \xi)$ to $\T^{1}\MM$, 
\begin{equation} \label{GMeq}
\limsup_{t\to\infty} 
\left|
\log \G(\widetilde\omega_{0}, \widetilde\omega_{t}) -K(\omega_{t})+K(\omega_{0})
\right|
<\infty.
\end{equation}
Note that for $\mathbb{P}_{(x, \xi)}$-a.e. $\omega$ with a lift $(\widetilde\omega, \xi)$, $K(\omega_{t})-K(\omega_{0})=\log \kk(\widetilde\omega_{0}, \widetilde\omega_{t}, \xi)$ and $\widetilde\omega_{\infty}\ne \xi$.
We denote by $z_{t}$ the closest point to $\widetilde\omega_{0}$ on the geodesic ray $[\widetilde\omega_{t}, \xi)$ generated by $(\widetilde\omega_{t}, \xi)$, 
$z_{t}$ converges to a point $z_{\infty}\in\MM$ on the geodesic $(\widetilde\omega_{\infty}, \xi)$ joining two boundary points $\widetilde\omega_{\infty}$ and $\xi$.
We have that for every $y$ on $[\widetilde\omega_{t}, \xi)$,
\begin{align*}
|\log\G(\widetilde\omega_{0}, \widetilde\omega_{t})-\log \kk(\widetilde\omega_{0}, \widetilde\omega_{t},\xi)|
\le
\left|\log \frac{\G(\widetilde\omega_{0},\widetilde\omega_{t})}{\G(z_{t}, \widetilde\omega_{t})}\right|
+
\left|
\log \frac{\G(z_{t}, \widetilde\omega_{t})}
{\left(\frac{\G(y, \widetilde\omega_{t})}{\G(y, z_{t})}\right)}
\right|
+
\left|\log\frac
{\left(\frac{\G(y, \widetilde\omega_{t})}{\G(y, z_{t})}\right)}
{\kk(\widetilde\omega_{0}, \widetilde\omega_{t},\xi)}\right|.
\end{align*}
Applying the Harnack inequality to the first term in the right handed side, 
since $\{\dist(\widetilde\omega_{0}, z_{t})\}_{t\ge0}$ is bounded,
it follows that 
$\left|\log\frac{\G(\widetilde\omega_{0}, \widetilde\omega_{t})}{\G(z_{t},\widetilde\omega_{t})}\right|
\le 
C_{1}$
for some constant $C_{1}=C_{1}(\widetilde\omega)>0$ dependent of $\widetilde\omega$ but not $t$.
And by the Ancona inequality (\cite{Ancona_1987}), 
the second term in the right handed side is also bounded by 
$C_{2}(\widetilde\omega)$. Letting $y$ tend to $\xi$, 
we see that the last term converges to
$\left|\log\frac{\kk(z_{t}, \widetilde\omega_{t}, \xi)}{\kk(\widetilde\omega_{0}, \widetilde\omega_{t}, \xi)}\right|$ which is also bounded by $C_{1}(\widetilde\omega)$ due to the Harnack inequality.
Therefore we have (\ref{GMeq}) and this completes the proof of Theorem \ref{CLT}.

\section{Proof of Theorem \ref{Ctr}}\label{ProofCtr}

In this section, we prove the contraction property on H\"older spaces of the foliated Brownian motion.
For the H\"older semi-norm, 
we prove a lower bound of the expectation of the Busemann functions at Brownian points which depends only on the dimension and the curvature bounds and linearly on time $T$.
The lower bound follows from the fact that the Laplacian of the Busemann function has the same lower bound with the Laplacian of the distance function due to the Rauch comparison theorem.
We also show the Doeblin property of the Brownian motion for the estimate of the uniform norm.

\begin{prop} \label{CtrH}
For sufficiently small $\tau$, there exists $C_{1}>0$ such that for each $t>0$,
\begin{equation*}
\sup_{x\in\M_{0}}
\sup_{\xi,\eta\in\partial\MM}
\frac{|\mathcal{Q}^{t}f(x,\xi)-\mathcal{Q}^{t}f(x,\eta)|}{d_{\infty}^{x,\tau}(\xi, \eta)}
\le
\|f\|_{\mathcal{L}^{\tau}} e^{-C_{1}t}.
\end{equation*}
\end{prop}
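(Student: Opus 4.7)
The plan is to reduce the H\"older estimate to a Brownian expectation bound on a sum of Busemann cocycles, then close by an It\^o calculation using a uniform positive lower bound on $\Delta\bb$. Using the identity $\Q^t f(x,\xi)=\widetilde{\Q}^t \widetilde{f}(x,\xi)=\mathbb{E}_x[\widetilde{f}(\widetilde{\omega}_t,\xi)]$ together with the $\Gamma$-invariance of $\widetilde{f}$ and of the Gromov product, one gets $|\widetilde{f}(y,\xi)-\widetilde{f}(y,\eta)|\le \|f\|_{\LL^\tau}\,e^{-\tau(\xi|\eta)_y}$ for every $y\in\MM$, hence
\begin{equation*}
|\Q^t f(x,\xi)-\Q^t f(x,\eta)|\le \|f\|_{\LL^\tau}\,\mathbb{E}_x\!\left[e^{-\tau(\xi|\eta)_{\widetilde{\omega}_t}}\right].
\end{equation*}
The cocycle relation $(\xi|\eta)_y-(\xi|\eta)_x=\bb(y,x,\xi)+\bb(y,x,\eta)$, immediate from the defining limits, reduces the proposition to producing $C_1>0$ such that
\begin{equation*}
\mathbb{E}_x\!\left[\exp\bigl(-\tau[\bb(\widetilde{\omega}_t,x,\xi)+\bb(\widetilde{\omega}_t,x,\eta)]\bigr)\right]\le e^{-C_1 t}.
\end{equation*}

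The next step is to secure a uniform positive lower bound on $\Delta\bb$. By the Rauch comparison theorem applied to the normalized distance $f_n(y)=\dist(y,z_n)-\dist(x,z_n)$, one has $\Delta f_n(y)\ge (d-1)a\coth\!\bigl(a\,\dist(y,z_n)\bigr)$. Letting $z_n\to\xi$, the $\C^2$-convergence of Proposition \ref{C2conv} together with $\coth(as)\downarrow 1$ as $s\to\infty$ yields $\Delta\bb(\cdot,x,\xi)\ge (d-1)a$ on $\MM$; similarly, $|\nabla f_n|\equiv 1$ passes to the limit to give $|\nabla \bb(\cdot,x,\xi)|\equiv 1$. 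Consequently $Y(y):=\bb(y,x,\xi)+\bb(y,x,\eta)$ satisfies $\Delta Y\ge 2(d-1)a$ and $|\nabla Y|^2\le 4$ everywhere on $\MM$. This is the step that substitutes for the compactness-dependent double-process argument of \cite{Led_1995}.

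Since $(\widetilde{\omega}_t)$ has generator $\Delta$ and $Y$ is $\C^2$, It\^o's formula applied to $Z_t:=e^{-\tau Y(\widetilde{\omega}_t)}$ gives
\begin{equation*}
\frac{d}{dt}\mathbb{E}_x[Z_t]=\mathbb{E}_x\!\left[(\tau^2|\nabla Y|^2-\tau\Delta Y)(\widetilde{\omega}_t)\,Z_t\right]\le \bigl(4\tau^2-2\tau(d-1)a\bigr)\,\mathbb{E}_x[Z_t].
\end{equation*}
Choosing $\tau$ small enough that the coefficient is strictly negative --- which also ensures that $d_\infty^{x,\tau}$ is a genuine distance on $\partial\MM$ --- Gronwall and the initial condition $Z_0=1$ give $\mathbb{E}_x[Z_t]\le e^{-C_1 t}$ with $C_1:=2\tau(d-1)a-4\tau^2>0$. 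Combined with the reduction of the first paragraph, this completes the proof.

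The main obstacle is the second step: the pointwise Rauch inequality must be transferred from distance functions to the Busemann function through a limit, and Proposition \ref{C2conv} is exactly what enables this interchange in the noncompact pinched curvature setting. Once $\Delta\bb\ge(d-1)a$ is secured, the remaining It\^o computation is routine, at the cost that the contraction rate $C_1$ obtained is less sharp than the one available in the compact case of \cite{Led_1995}.
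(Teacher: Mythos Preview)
Your proof is correct and shares the same geometric core as the paper's --- namely, the pointwise lower bound $\Delta\bb(\cdot,x,\xi)\ge(d-1)a$ obtained by passing the Rauch comparison inequality for $\Delta\dist(\cdot,z_n)$ to the limit via Proposition~\ref{C2conv}. The difference lies in how this bound is converted into exponential contraction. The paper expands $e^{-\tau R}$ to second order in $R=(\xi|\eta)_{\widetilde\omega_t}-(\xi|\eta)_x$, controls the quadratic remainder by a uniform bound on $\mathbb{E}_x[\dist(x,\widetilde\omega_t)^2 e^{2\tau\dist(x,\widetilde\omega_t)}]$ coming from the heat kernel comparison (Proposition~\ref{HCom}), bounds the linear term via the separate Lemma~\ref{ctrlem} (which is precisely $\mathbb{E}_x[\bb(\widetilde\omega_T,x,\xi)]\ge(d-1)aT$), and then invokes the Markov property to get submultiplicativity and hence exponential decay. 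You instead apply It\^o's formula directly to $e^{-\tau Y(\widetilde\omega_t)}$ and close with Gronwall, which is shorter and gives the explicit rate $C_1=2\tau(d-1)a-4\tau^2$; the price is that you are implicitly using the pointwise bounds $|\nabla Y|\le 2$ and $\Delta Y$ bounded (both available here), and you need $\mathbb{E}_x[e^{-\tau Y(\widetilde\omega_t)}]<\infty$ to justify differentiating under the expectation --- this follows from $|Y(y)|\le 2\dist(x,y)$ and the same heat kernel bound the paper uses, so it is not a gap, but it is worth stating. In short: same key lemma, cleaner packaging on your side; the paper's Taylor/submultiplicativity route is slightly more robust in that it only consumes the first-moment estimate of Lemma~\ref{ctrlem} rather than the pointwise differential inequality.
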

\begin{proof}
Since we have that
\begin{align*}
\frac{|\mathcal{Q}^{t}f(x,\xi)-\mathcal{Q}^{t}f(x,\eta)|}{d_{\infty}^{x,\tau}(\xi, \eta)}
&<\int_{\MM} 
\wp(t,x, y) 
\frac{\left|\widetilde{f}(y, \xi)-\widetilde{f}(y, \eta)\right|}{d_{\infty}^{x,\tau}(\xi, \eta)}
d\vol_{\MM}(y)
\\
&<\|f\|_{\mathcal{L}^{\tau}}
\int_{\MM}
\wp(t,x,y)
\frac{d_{\infty}^{y,\tau}(\xi, \eta)}{d_{\infty}^{x,\tau}(\xi, \eta)}
d\vol_{\MM}(y)\\
&=\|f\|_{\mathcal{L}^{\tau}}
\mathbb{E}_{x}
\left[
\frac{d_{\infty}^{y,\tau}(\xi, \eta)}{d_{\infty}^{x,\tau}(\xi, \eta)}
\right],
\end{align*}
it is sufficient to find $C_{1}>0$ such that
\begin{equation*}
\sup_{x, \xi, \eta}
\mathbb{E}_{x}
\left[
\frac{d_{\infty}^{\widetilde\omega_{t},\tau}(\xi, \eta)}{d_{\infty}^{x,\tau}(\xi, \eta)}
\right]
<
e^{-C_{1}t}.
\end{equation*}
Due to the Markov property of the Brownian motion,
\begin{align*}
\sup_{x, \xi, \eta}
\mathbb{E}_{x}
\left[
\frac{d_{\infty}^{\widetilde\omega_{t+s},\tau}(\xi, \eta)}{d_{\infty}^{x,\tau}(\xi, \eta)}
\right]
&=
\sup_{x, \xi, \eta}
\mathbb{E}_{x}
\left[
\frac{d_{\infty}^{\widetilde\omega_{s},\tau}(\xi, \eta)}{d_{\infty}^{x,\tau}(\xi, \eta)}
\mathbb{E}_{x}
\left[
\left.
\frac{d_{\infty}^{\widetilde\omega_{t+s},\tau}(\xi, \eta)}{d_{\infty}^{\widetilde\omega_{s},\tau}(\xi, \eta)}
\right| \mathscr{F}_{s}(\MM)
\right]
\right]\\
&\le
\sup_{x, \xi, \eta}
\mathbb{E}_{x}
\left[
\frac{d_{\infty}^{\widetilde\omega_{t},\tau}(\xi, \eta)}{d_{\infty}^{x,\tau}(\xi, \eta)}
\right]
\sup_{x, \xi, \eta}
\mathbb{E}_{x}
\left[
\frac{d_{\infty}^{\widetilde\omega_{s},\tau}(\xi, \eta)}{d_{\infty}^{x,\tau}(\xi, \eta)}
\right].
\end{align*}
Let us write $g(\widetilde\omega_{t}):=(\xi|\eta)_{\widetilde\omega_{t}} -(\xi|\eta)_{x}$.
Applying the Taylor theorem to the function $R\mapsto\exp(-\tau R)$
and substituting $g(\widetilde\omega_{t})$ for $R$, we have
\begin{align*}
\frac{d_{\infty}^{\widetilde\omega_{t},\tau}(\xi, \eta)}{d_{\infty}^{x,\tau}(\xi, \eta)}
\le 1-\tau g(\widetilde\omega_{t})
+\tau^{2}\dist_{}(x, \widetilde\omega_{t})^{2}
e^{2\tau\dist_{}(x,\widetilde\omega_{t})}.
\end{align*}
By Proposition \ref{HCom}, for some constant $C_{1}'>0$,
\begin{equation}\label{SEM}
\sup_{x}
\mathbb{E}_{x}
\left[
\dist_{}(x, \widetilde\omega_{t})^{2}
e^{2\tau\dist_{}(x,\widetilde\omega_{t})}
\right]
< C_{1}'.
\end{equation}
Therefore, with (\ref{SEM}) and Lemma \ref{ctrlem} below, we have
\begin{equation*}
\sup_{0\le t \le T}
\sup_{x, \xi,\eta}
\mathbb{E}_{x}
\left[
\frac{d_{\infty}^{\widetilde\omega_{t}, \tau}(\xi, \eta)}
{d_{\infty}^{x,\tau}(\xi, \eta)}
\right]
\le 1- \tau (d-1)a + \tau^{2} C_{1}'.
\end{equation*}
Fix $T\ge1$ 
and 
sufficiently small
$\tau$ 
such that 
$1- \tau (d-1)a + \tau^{2} C_{1}'<1$.
For such small $\tau$, 
put $C_{1}=(1-a(d-1)\tau+C_{1}'\tau^{2})^{\frac{1}{T}}$ and the inequality follows.
\end{proof}

\begin{lem}\label{ctrlem}
For every $T\ge0$,
\begin{equation*}
\inf_{x \in\M_{0}}\inf_{\xi\ne\eta} 
\mathbb{E}_{x}[(\xi|\eta)_{\widetilde\omega_{T}}-(\xi|\eta)_{x}]
\ge(d-1)a T.
\end{equation*}
\end{lem}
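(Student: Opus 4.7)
The plan is to express the Gromov product increment as a sum of expected Busemann function values, then combine It\^o's formula with the Laplacian comparison theorem for horospheres, relying on the $\C^{2}$-convergence from Proposition \ref{CCB}.

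First I would observe that, directly from the definitions, for any $y\in\MM$ and $\xi,\eta\in\partial\MM$, choosing sequences $z_{n}\to\xi$ and $w_{n}\to\eta$,
\begin{equation*}
(\xi|\eta)_{y}-(\xi|\eta)_{x} = \lim_{n}\bigl[\dist(y,z_{n})-\dist(x,z_{n})\bigr]+\lim_{n}\bigl[\dist(y,w_{n})-\dist(x,w_{n})\bigr] = \bb(y,x,\xi)+\bb(y,x,\eta).
\end{equation*}
Substituting $y=\widetilde\omega_{T}$ and taking $\mathbb{E}_{x}$, it then suffices to prove, for each $\zeta\in\partial\MM$, the uniform lower bound $\mathbb{E}_{x}[\bb(\widetilde\omega_{T},x,\zeta)]\ge(d-1)aT$; summing the two contributions in fact yields the stronger bound $2(d-1)aT$.

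Next I would apply It\^o's formula to $y\mapsto\bb(y,x,\zeta)$. This function is $\C^{2}$ with $|\nabla\bb|\equiv 1$ and uniformly bounded Hessian, by Proposition \ref{CCB} together with the hypothesis $|\nabla\mathrm{sec}_{\M}|\le c$. Since the heat equation convention of Section \ref{Pre} takes $\Delta$ itself as generator,
\begin{equation*}
\bb(\widetilde\omega_{T},x,\zeta)-\bb(x,x,\zeta) = \int_{0}^{T}\Delta_{y}\bb(\widetilde\omega_{s},x,\zeta)\,ds + M_{T},
\end{equation*}
where $M_{T}$ is a square-integrable martingale whose quadratic variation equals $\int_{0}^{T}|\nabla\bb|^{2}\,ds=T$. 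Since $\bb(x,x,\zeta)=0$ and $\Delta\bb$ is bounded, taking expectation gives
\begin{equation*}
\mathbb{E}_{x}[\bb(\widetilde\omega_{T},x,\zeta)] = \int_{0}^{T}\mathbb{E}_{x}\!\bigl[\Delta_{y}\bb(\widetilde\omega_{s},x,\zeta)\bigr]\,ds.
\end{equation*}

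The core geometric input is then the pointwise bound $\Delta_{y}\bb(y,x,\zeta)\ge(d-1)a$. For any $z\in\MM$ and $r(y):=\dist(y,z)$, the Laplacian comparison theorem under $\mathrm{sec}_{\M}\le -a^{2}$ gives $\Delta r(y)\ge(d-1)a\coth(ar(y))\ge(d-1)a$ on $\MM\setminus\{z\}$. Taking $z_{n}\to\zeta$, the normalized distance functions $f_{n}(y)=\dist(y,z_{n})-\dist(x,z_{n})$ inherit the same bound $\Delta f_{n}\ge(d-1)a$, and by Proposition \ref{C2conv} the sequence $\Delta f_{n}$ converges to $\Delta\bb(\cdot,x,\zeta)$ uniformly on compact sets, so the lower bound passes to the limit. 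Integrating over $[0,T]$ yields the desired estimate uniformly in $x\in\M_{0}$ and $\zeta\in\partial\MM$, completing the proof.

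The only technical subtlety, and likely the main obstacle for a careful reader, is the justification of It\^o's formula for a globally unbounded function and the interchange of expectation with the time integral; this is routine given the uniform bounds $|\nabla\bb|\equiv 1$ and $\Delta\bb\le(d-1)b$ (applying the same comparison with the upper curvature bound $-b^{2}$), together with the non-explosion of Brownian motion on the complete manifold $\MM$. Crucially, only the pinched curvature and the bound $|\nabla\mathrm{sec}_{\M}|\le c$ are used, not the co-finiteness of $\MM$ nor any double-process argument.
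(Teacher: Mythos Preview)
Your proof is correct and follows essentially the same route as the paper: reduce the Gromov product increment to Busemann functions, express the expectation as a time integral of the expected Laplacian (you phrase this via It\^o's formula, the paper via the semigroup identity $\Q^{T}f=\int_{0}^{T}\Q^{t}\Delta_{s}f\,dt$, which is the same computation after taking expectation), and then invoke the Laplacian comparison $\Delta r\ge (d-1)a\coth(ar)$ together with Proposition~\ref{C2conv}. The factor-of-two discrepancy you notice is real given the paper's stated definition of $(\xi|\eta)_{x}$, which omits the customary $\tfrac12$; the paper's own proof uses the identity with the $\tfrac12$, so the intended definition almost certainly carries that factor and the bound is exactly $(d-1)aT$.
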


\begin{proof}[Proof of Lemma \ref{ctrlem}]
Due to the equation 
$$(\xi|\eta)_{x}-(\xi|\eta)_{y}= \frac{1}{2} \bb(x, y, \xi)+\frac{1}{2}\bb(x,y,\eta),$$
it suffices to show that
\begin{equation*}
\mathbb{E}_{x}[\bb(\widetilde\omega_{T}, x, \eta)]
\ge (d-1)a T.
\end{equation*}

Choose 
$z_{n}\in\MM$ 
such that 
$z_{n}\to \xi$ as $n\to\infty$
 and write 
$$f_{n}(y):=\bb(y, x, z_{n})= \dist_{}(y, z_{n})-\dist_{}(x, z_{n}).$$ 
By the Rauch's comparison theorem (see \cite{Petersen_2016}, for instance), 
\begin{align}\label{RCom}
\Delta f_{n} (y)
&= \Delta_{y} \dist_{}(y, z_{n})
\ge (d-1)\frac{ \textrm{sn}_{-a^2}'(\dist_{}(y, z_{n}))}{\textrm{sn}_{-a^2}(\dist_{}(y, z_{n}))}\\
&=a(d-1) \coth \left(a\,\dist_{}(y, z_{n})\right)
\end{align}
where $\textrm{sn}_{-a^{2}}(t)=\frac{1}{a}\sinh (at)$.

Let $f(y, \xi)=\bb(y, x,\xi)$. Then, since $\Delta_{s}$ is the generator of $\mathcal{Q}^{t}$ and $\Delta_{s}f(y,\xi) = \Delta_{y}f(y, \xi)$, 
$$
\mathbb{E}_{x}[\bb(\widetilde\omega_{T}, x, \xi)]
=\mathcal{Q}^{T}f(x, \xi)
=\int_{0}^{T} \mathcal{Q}^{t}\Delta_{s} f(x, \xi)dt
=\int_{0}^{T} \mathbb{E}_{x}[\Delta\bb(\widetilde\omega_{t}, x, \xi)]dt.
$$
Due to (\ref{RCom}) and Proposition \ref{C2conv},
\begin{equation*}
\mathbb{E}_{x}[\bb(\widetilde\omega_{T}, x, \xi)]\ge (d-1)aT
\end{equation*}
for every $x\in\MM$ and every $\xi\in\partial\MM$.
\end{proof}

Write $P(t,x,y)=\sum_{\gamma\in\Gamma} \wp(t, x, \gamma y)$ for $x, y\in\M_{0}$.
We have $\lim_{t\to\infty}P(t,x,y)=\frac{1}{\vol(\M)}$, in particular, $P(t, x, x)$ decreases as $t\to\infty$ (see \cite{ChaK_1991}).
We also have that
\begin{align}
\label{CSineq}
\left(
\int_{\M} 
\left|
P(t, x, y) -\frac{1}{\vol(\M)}
\right|
d\vol(y)
\right)^{2}
&\le
\vol(\M)
\int_{\M} 
\left|
P(t, x, y) -\frac{1}{\vol(\M)}
\right|^{2}
d\vol(y)\\
&=
\vol(\M)
\left(
P(2t, x, x)-\frac{1}{\vol(\M)}
\right).
\end{align}
Hence the integral on the left-handed side decreases to zero as $t$ goes to infinity. 
Indeed, it decays exponentially fast (see \cite{Don_1987}).
The following lemma shows that it has uniform exponential decay rate.

\begin{lem} \label{Doeblin}
There exists a constant $C_{2}=C_{2}(d, b)>0$ such that for each $x\in\M$,
\begin{equation*}
\int_{\M} 
\left|
P(t, x, y) -\frac{1}{\vol(\M)}
\right|
d\vol(y)
\le
C_{2}
e^{-\frac{\lambda_{1}}{2}t},
\end{equation*}
where $\lambda_{1}=\inf\{ \lambda>0: \lambda \in \mathrm{Spec}(\Delta_{\M})\}$.
\end{lem}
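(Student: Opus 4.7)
The plan is to combine the Cauchy--Schwarz inequality (\ref{CSineq}) already displayed in the excerpt with the spectral gap of $\Delta_{\M}$ on $L^{2}(\M, \vol)$. By (\ref{CSineq}), the statement of the lemma reduces to proving a uniform diagonal estimate of the form
\begin{equation*}
P(2t, x, x) - \vol(\M)^{-1} \;\le\; C\, e^{-\lambda_{1} t}
\end{equation*}
for $t$ bounded below, with the constant $C$ independent of $x$. The factor of $2$ that is lost between the exponent $\lambda_{1}$ in this diagonal bound and the exponent $\lambda_{1}/2$ in the statement of the lemma is exactly the loss from the Cauchy--Schwarz step.

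The diagonal bound is the spectral input. Since $\M$ has finite volume, $0$ is a simple eigenvalue of $\Delta_{\M}$ with normalized eigenfunction $\vol(\M)^{-1/2}$, and by the definition of $\lambda_{1}$ the remainder of $\mathrm{Spec}(\Delta_{\M})$ lies in $[\lambda_{1}, \infty)$. Writing $\Pi_{0}$ for the orthogonal projection of $L^{2}(\M)$ onto the constants and $P_{t} := e^{-t\Delta_{\M}}$ for the heat semigroup, the spectral theorem yields $\|P_{t} - \Pi_{0}\|_{L^{2} \to L^{2}} \le e^{-\lambda_{1} t}$. Fixing an auxiliary time $s_{0}>0$, the function $g_{x}(y) := P(s_{0}, x, y) - \vol(\M)^{-1}$ lies in $L^{2}(\M)$ with mean zero, and Chapman--Kolmogorov gives $P(t+s_{0}, x, \cdot) - \vol(\M)^{-1} = P_{t} g_{x}$. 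Consequently,
\begin{equation*}
P(2(t+s_{0}), x, x) - \vol(\M)^{-1} \;=\; \|P_{t} g_{x}\|_{L^{2}}^{2} \;\le\; e^{-2\lambda_{1} t}\bigl(P(2s_{0}, x, x) - \vol(\M)^{-1}\bigr).
\end{equation*}

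The main obstacle is the remaining $x$-dependent factor $P(2s_{0}, x, x)$. To control it uniformly, I would apply the upper heat kernel comparison of Proposition \ref{HCom} to each summand of $P(2s_{0}, x, x) = \sum_{\gamma \in \Gamma} \wp(2s_{0}, \tilde x, \gamma \tilde x)$, combined with a volume--growth estimate for $\Gamma$-orbits in $\MM$ depending only on the curvature upper bound $-b^{2}$ and the dimension $d$. The Gaussian decay of $\wp_{\mathbb{H}^{d}(-b^{2})}$ combined with the exponential volume growth of balls in $\mathbb{H}^{d}(-b^{2})$ should yield a convergent sum whose size is controlled by a constant $C(d,b)$. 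The trivial bound $\int_{\M} |P(t,x,y) - \vol(\M)^{-1}|\, d\vol(y) \le 2$ disposes of the short-time range $t \le T_{0}$, which can then be absorbed into the final constant $C_{2}$. The delicate point is the behaviour near the cusps, where $\Gamma$-orbits of $\tilde x$ accumulate through parabolic isometries; the Gaussian decay of the hyperbolic heat kernel has to be shown to dominate the growth of these parabolic orbit counts in order for the sum to converge uniformly in $x$.
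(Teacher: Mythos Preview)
Your overall strategy is exactly the paper's: Cauchy--Schwarz via (\ref{CSineq}), then the spectral gap $\|\PP^{t}\|_{L^2_0\to L^2_0}\le e^{-\lambda_1 t/2}$, and finally a reduction to a uniform on-diagonal bound $\sup_{x\in\M}P(2t_0,x,x)<\infty$ for a single auxiliary time $t_0$. You have also correctly located the only real difficulty, the uniformity of this diagonal bound as $x$ enters a cusp.

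What is missing is precisely that cusp estimate, which is the substance of the paper's proof and which your sketch leaves as ``has to be shown''. Two remarks on your proposed tool. First, the upper bound in Proposition~\ref{HCom} is $\wp_{\mathbb{H}^d(-a^2)}$, not $\wp_{\mathbb{H}^d(-b^2)}$; and the paper does not use Proposition~\ref{HCom} here at all but rather Grigor'yan's Gaussian bound $\wp(t,x,y)\le C(d,b)\,(\dist(x,y)^2/t)^{1+d/2}\exp(-\dist(x,y)^2/4t-\lambda_0 t)$. Second, a naive combination of a Gaussian heat bound with a global orbit-counting estimate does not obviously yield a bound uniform in the cusp depth: as $x=x_n$ moves to Busemann level $n$ in the cusp at $\xi$, the parabolic displacements $\dist(x_n,\gamma x_n)$ for $\gamma\in\Gamma_\xi$ contract by a factor $e^{-an}$ to $e^{-bn}$, so the number of parabolic lattice points within any fixed Gaussian window grows with $n$. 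The paper handles this by separating $\Gamma$ into the parabolic stabiliser $\Gamma_\xi$ and its complement, using the horospherical contraction (\ref{Rc}) to rewrite the parabolic sum in terms of displacements at the fixed basepoint $x_0$, and then optimising the resulting expression to show it is dominated by the Poincar\'e series $Q_{\Gamma_\xi,x_0}(\delta_\Gamma+1)$ once $n$ exceeds a threshold $N_\xi$; the non-parabolic terms are reduced similarly to $Q_{\Gamma,x_N}(\delta_\Gamma+1)$. The precompact piece $\M_N$ is then trivial. None of this is suggested by ``Gaussian decay dominates orbit growth'', and carrying it out is the work of the lemma.
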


\begin{rmk} 
Since the bottom of the ($L^{2}$-)esssential spectrum $\lambda_{ess}:=\inf \mathrm{Spec}_{ess}\left(\Delta_{\M}\right)$ of the Laplacian is positive (\cite{Dodziuk_1987}) and $\mathrm{Spec} \left(\Delta_{\M}\right)\cap [0, \lambda_{ess})$ is discrete (\cite{Don_1987}), the smallest nonzero the spectrum $\lambda_{1}$ is also positive.
\end{rmk}

\begin{proof}
If we consider $\PP^{t}f(x):=\int (P(t, x, y)-\vol(\M)^{-1})f(y)d\vol(y)$ as a self-adjoint operator acting on the space $L^{2}_{0}(\M)$ of square-integrable functions with zero integral, 
$\Delta|_{L^{2}_{0}(\M)}$ is the generator of $P^t$ with the bottom of the spectrum $\lambda_{1}$.
Therefore the operator norm satisfies 
\begin{equation}\label{heatctr}
\|\PP^{t}\|\le e^{-\frac{\lambda_{1}t}{2}}
\end{equation}
for every $t>0$ (see the proof of Proposition V.1.2 in \cite{EngNag}).

For every $x\in\M_{0}$,
if we denote
$f_{t}(y) = P(t, x, y)-\frac{1}{\vol(\M)}$, then
$f_{t+t_{0}}(y) = \PP^{t}f_{t_{0}}(y)$.
It follows from (\ref{CSineq}) and (\ref{heatctr}) that
\begin{align*}
\int_{\M}
\left|
P(t+t_{0}, x, y) - \frac{1}{\vol(\M)}
\right|
d\vol(y)
&\le
\left(
\vol(\M)
\int_{\M} 
\left|
f_{t+t_{0}}(y)
\right|^{2}
d\vol(y)
\right)^{1/2}
\\
&\le
\|\PP^{t}\|
\left\|
f_{t_{0}}
\right\|_{2}
\le
e^{-\frac{\lambda_{1}t}{2}}
|f_{2t_{0}}(x)|^{1/2}
\\
&=
e^{-\frac{\lambda_{1}t}{2}}
\left|
P(2t_{0}, x,x)-\frac{1}{\vol(\M)}
\right|^{1/2}.
\end{align*}
Thus it suffices to prove that the diagonal supremum $\sup_{x\in \M} P(2 t_{0}, x, x)$ 
of the heat kernel on $\M$ is finite for some $t_{0}>0$.

Recall that we identify the fundamental domain $\M_{0}$ with $\M$ and 
$P(t,x, y)= \sum_{\gamma\in\Gamma} \wp(t, x, \gamma y)$.
In order to estimate the diagonal supremum $\sup_{x\in\M} P(t, x, x)$ of the heat kernel on $\M$, 
we shall use the Gaussian upper bound of the heat kernel on $\MM$ (Corollary 5 in \cite{Grigoryan_1994}): 
there is a constant $C=C(d, b)$ such that for each $t>1$,
\begin{equation}\label{Gaussbdd}
\wp(t,x,y)
\le 
C
\left(\frac{\dist_{}(x,y)^{2}}{t}\right)^{1+\frac{d}{2}}
\exp \left(-\frac{\dist_{}(x,y)^{2}}{4t}-\lambda_{0} t \right).
\end{equation}

Fix $x_{0}\in\M_{0}$.
For a cuspidal point $\xi \in \Pi(\M_{0}):=\partial\MM\cap \overline\M_{0}$,
we denote the cuspidal region of level $n$ based at $\xi$ by 
\begin{equation*}
\HH(\xi, n):= \{y \in \M_{0}: \bb(x_{0}, y, \xi)\ge n\}.
\end{equation*}
Let $x_{n}=x^{\xi}_n \in \HH(\xi, n)$ be the point in the geodesic ray joining $x_{0}$ and $\xi$ with $\bb(x_{0}, x_{n}, \xi)= n$. 
If $\gamma$ is in the stabilizer $\Gamma_{\xi}$ of $\xi$, then $x_{0}$ and $\gamma x_{0}$ are in the horosphere of the same level based at $\xi$. This implies that for every $\gamma \in \Gamma_{\xi}$,
\begin{equation} 
\label{Rc}
e^{-b(n+1)}\dist_{}(x_{0}, \gamma x_{0})
\le 
\dist_{}(x_{n}, \gamma x_{n})
\le 
e^{-an} \dist_{}(x_{0}, \gamma x_{0}).
\end{equation} 
Applying (\ref{Rc}) to the Gaussian bound (\ref{Gaussbdd}), for each $\gamma \in \Gamma_{\xi}$,
\begin{equation}\label{Parabolic}
\wp(t, x_{n}, \gamma x_{n})
\le 
C
e^{-\lambda_{0} t}
\dist_{}(x_{0},\gamma x_{0})^{d+2}
\exp
\left(
-\frac{\dist_{}(x_{0}, \gamma x_{0})^{2}}{4t e^{2b(d-1)(n+1)}}
-a(d+2)n
\right).
\end{equation}
We want to show that given $\delta>0$, 
there is $t>0$ such that for every sufficiently large $n$,
\begin{equation*}
\textrm{the right-hand side of (\ref{Parabolic}) }
\le
e^{-\delta \dist(x_{0}, \gamma x_{0})}.
\end{equation*}

To simplify the notation, we put
\begin{equation*}
f_{n,\xi}(R)
:= 
R^{d+2}
\exp
\left(
-\frac{R^{2}}{4t e^{ 2b(d-1)(n+1)}}
+\delta R
\right).
\end{equation*}
Since its derivative is
\begin{equation*}
f_{n, \xi}'(R)= 
R^{d+1} 
\left(
d+2 - \frac{R^{2}}{2t} e^{-2b(d+2)(n+1)} +\delta R
\right)
\exp
\left( -\frac{R^{2}}{4t} e^{-2b(d+2)(n+1)}+\delta R\right), 
\end{equation*}
the positive nonzero extreme point of $f_{n,\xi}$ is 
$R_{n}:=t\delta e^{2b(n+1)}+\sqrt{t^{2}\delta^{2}e^{4b(n+1)}+2t(d+2)e^{2b(n+1)}}$.
Thus $f_{n, \xi}$ has the maximum on $\mathbb{R}_{+}$ at $R_{n}$:
\begin{align*}
f_{n, \xi}(R)
&\le 
f_{n,\xi}(R_{n})\\
&=
R_{n}^{d+2}
\exp\left(-\frac{R_{n}^{2}}{4t} +\delta R_{n} -a(d+2)n\right)
\\
&
\le
\left(3t\delta e^{2b(n+1)}\right)^{d+2}
\exp
\left(
-\frac{t\delta^{2} e^{4b(n+1)}}{2}
+3t\delta^{2} e^{2b(n+1)}
-a(d+2)n
\right)
\\
&
=
\left[
(3t\delta)
e^{2b(n+1)-an}
\right]^{d+2}
e^{-\frac{9\delta^{2}t}{2}}
\exp
\left(
-\frac{\delta^{2}t}{2}
( e^{2b(n+1)} -3)^{2}
\right).
\end{align*}
Therefore, there is $N_{\xi}(\delta, t)$ such that if $n>N_{\xi}(\delta, t)$, then
$f_{n,\xi}(R)\le C^{-1} t^{-1-\frac{d}{2}}e^{\lambda_{0} t}$, 
hence
$\wp(t, x_{n}, \gamma x_{n}) \le e^{-\delta \dist( x_{0}, \gamma x_{0})}$.
We conclude that
\begin{align}\label{cuspidal}
\sum_{\gamma\in \Gamma_{\xi}}
\wp(t, x_{n}, \gamma x_{n})
\le
\sum_{\gamma\in \Gamma_{\xi}}
e^{-\delta \dist(x_{0}, \gamma x_{0})} = Q_{\Gamma_{\xi}, x_{0}}(\delta),
\end{align}
where $Q_{G, x}(\delta):=\sum_{g\in G}e^{-\delta \dist(x, g x)}$ denotes the Poincar\'e series of a discrete group $G$ of isometries on $\MM$.
We denote the abscissa of convergence of $Q_{G, x}$, which is called the \textit{critical exponent} of $G$, by $\delta_{G}$.

Put $N_{\xi}:= N_{\xi}\left(\delta_{\Gamma}+1, t\right)$ 
and choose $N$ larger than $\max_{\xi \in \Pi(\M_{0})} N_{\xi}$. We define a truncated domain in the fundamental domain $\M_{0}$ by
\begin{equation*}
\M_{N}:= \M_{0} \setminus \bigcup_{\xi\in\Pi(\M_{0})} \HH\left(\xi,  N \right).
\end{equation*}
Note that $\M_{N}$ is a pre-compact domain. 
Take $x_{0}\in\M_{N}$ and $x \in \HH\left(\xi, N\right)$ for some $\xi\in \Pi(\M_{0})$.
Then we can replace $x$ by $x_{n}=x^{\xi}_{n}$ for some $n\ge N$:
there is $n\ge N$ such that $x\in \HH(\xi, n) \setminus \HH(\xi, n+1)$ and $d(x, x_{n})$ is bounded uniformly on $n\ge N$.

We may assume that given $t>0$, 
$g(R)= \left(\frac{R^{2}}{t}\right)^{1+\frac{d}{2}}\exp\left(-\frac{R^2}{4t}\right)$ is decreasing and
$g(R)\le \exp(-(\delta_{\Gamma}+1)R)$ for every $R>0$.
Assume that $x_{n}$ is on the geodesic ray $[x_{0}, \xi)$ joining $x_{0}$ and $\xi$ and $d(x_{0}, x_{n})=n$.
From $\dist(x_{N}, \gamma x_{N}) -2(n-N) \le \dist (x_{n}, \gamma x_{n})$, writing $R_{n}=\dist(x_{n}, \gamma x_{n})$ for $n\ge N$, there exists $C'=C'(d)>1$ such that
\begin{align*}
g( R_{n} )
&\le g( R_{N} -2(n-N) )\\
&= \left(\frac{(R_{N}-2(n-N))^{2}}{t}\right)^{1+d/2} \exp\left(-\frac{(R_{N}-2(n-N))^{2}}{4t}\right)\\
&\le C' g(R_{N}) g_{N}( 2(n-N) ),
\end{align*}
where $g_{N}(T):= \left(\frac{T}{\sqrt{t}}\right)^{d+2}\exp\left(-\frac{T^2 - R_{N}T }{4t} \right)$.
By the similar computation as in (\ref{cuspidal}),
\begin{equation*}
g_{N}(T) 
\le g_{N} (T_{N}) 
\le \left(\frac{R_{N}}{\sqrt{t}}\right)^{d+2} \exp\left(\frac{15}{64t}R_{N}^{2}\right),
\end{equation*} 
where $T_{N}$ the critical value of $g_{N}$.
Thus we have
\begin{equation*}
g(R_{n}) 
\le C' \left(\frac{R_{N}}{t}\right)^{2d+4} \exp\left( -\frac{1}{16t}R_{N}^2 \right) 
\le C'' e^{-(\delta_{\Gamma}+1)\dist(x_{N}, \gamma x_{N})},
\end{equation*}
for some $C''>1$ independent of $N$.
Then it follows that 
\begin{align*}
P(t, x, x)
\le 
&
C
\sum_{\gamma\in\Gamma} 
\left(
\frac{\dist_{}(x, \gamma x)^{2}}{t}
\right)^{1+\frac{d}{2}}
\exp
\left(
-\frac{\dist(x,\gamma x)^{2}}{4t}
-\lambda t 
\right)\\
\le
&
C
Q_{\Gamma_{\xi}, x_{0}}(\delta_{\Gamma}+1)
+
CC''
\sum_{\gamma\notin\Gamma_{\xi}} 
e^{-(\delta_{\Gamma}+1)\dist(x_{N}, \gamma x_{N})}\\
\le
&
C(1+C'') \max
\left\{Q_{\Gamma, x_{0}}(\delta_{\Gamma}+1), Q_{\Gamma, x_{N}^{\xi}}(\delta_{\Gamma})\right\}.
\end{align*}

Hence we have $\sup_{x\in \HH(\xi, N)} P(t, x, x,)<\infty$ for every $\xi \in \Pi(\M_{0})$.
Therefore, since $\Pi(\M_{0})$ is a finite set, $\sup_{x\in\M}P(t, x, x) <\infty$.
\end{proof}

We are ready to verify the exponential decay of uniform norm and complete the proof of Theorem \ref{Ctr}. 
It is enough to show that the exponential decay of the supremum norm 
since we have already proved the exponential decay of H\"older norm in Proposition \ref{CtrH}.
\begin{prop}
There exists a constant $C_{2}>0$ such that for every $f\in\LL$, $t>0$
\begin{equation*}
\| \Q^{t}f -\N f\|_{\infty} 
\le 
\|f\|_{\L_{\tau}}
e^{-C_{2}t}.
\end{equation*}
\end{prop}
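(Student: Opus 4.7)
The plan is to apply the semigroup decomposition $\Q^t = \Q^{t-s}\circ\Q^s$ and combine the two previously-established ingredients: the exponential contraction of the H\"older seminorm (Proposition~\ref{CtrH}) and the exponential mixing of the heat semigroup on the base manifold $\M$ (Lemma~\ref{Doeblin}). Intuitively, $\Q^s f$ becomes nearly fiberwise constant for large $s$, after which $\Q^{t-s}$ equilibrates the resulting base function via Doeblin-type mixing.

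Concretely, fix $s\in(0,t)$ and define on the fundamental domain
\[
h(y):=\int_{\partial\MM} \Q^s f(y,\eta)\,d\nu_y(\eta),\qquad y\in\M_{0}.
\]
The $\Q^s$-invariance of $\hh$ gives $\int_{\M_{0}} h\,d\widetilde{m} = \int \Q^s f\,d\hh = \N f$, while Proposition~\ref{CtrH} together with $d_\infty^{y,\tau}(\xi,\eta)\le 1$ yields
\[
|\Q^s f(y,\xi)-h(y)|\le \int_{\partial\MM}|\Q^s f(y,\xi)-\Q^s f(y,\eta)|\,d\nu_y(\eta)\le \|f\|_{\LL^\tau}e^{-C_1 s}.
\]
Extending $h$ to the fiberwise constant function $H$ on $\T^1\M$, its $\Gamma$-invariant lift $\widetilde{H}$ to $\T^1\MM$ is independent of the boundary variable, so
\[
\Q^{t-s}H(x,\xi)=\int_{\M_{0}} h(y)\,P(t-s,x,y)\,d\vol(y)
\]
is $\xi$-independent. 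Applying $\Q^{t-s}$ (an $L^\infty$-contraction) to the preceding pointwise bound, and invoking Lemma~\ref{Doeblin} with $\|h\|_\infty\le \|f\|_{\LL^\tau}$ to estimate $\Q^{t-s}H-\N f$, we obtain
\[
|\Q^t f(x,\xi)-\N f|\le \|f\|_{\LL^\tau}\bigl(e^{-C_1 s}+C_2 e^{-\lambda_1(t-s)/2}\bigr).
\]

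Choosing $s=t/2$ (or optimizing) gives $\|\Q^t f-\N f\|_\infty\le C\|f\|_{\LL^\tau}e^{-C't}$ for some $C,C'>0$. A standard semigroup iteration using the identity $\Q^{T}(g-\N g)=\Q^{T}g-\N g$ (valid since $\Q^{T}$ fixes constants and $\N(\Q^{T}g)=\N g$) together with the $\LL^\tau$-boundedness of $\Q^{T}$ afforded by Proposition~\ref{CtrH} then absorbs the multiplicative constant into the exponent, yielding the claimed bound for a suitable $C_2>0$. The main technical point is the invariance identity $\int h\,d\widetilde{m}=\N f$, which relies on the stationarity of $\hh$ established in Section~\ref{Pre}; once this and the $\xi$-independence of $\widetilde{H}$ are verified, the rest is straightforward bookkeeping in the $\M_{0}\times\partial\MM$ coordinates.
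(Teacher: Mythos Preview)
Your proof is correct and follows essentially the same route as the paper: define the fiberwise average $h(y)=\int \Q^{s}f(y,\eta)\,d\nu_y(\eta)$ (the paper's $F_{s}$), split $\Q^{t}f-\N f$ into the ``fiber oscillation'' piece $\Q^{t-s}(\Q^{s}f-H)$ controlled by Proposition~\ref{CtrH} and the ``base mixing'' piece $\Q^{t-s}H-\N f$ controlled by Lemma~\ref{Doeblin}, then take $s=t/2$. The only difference is cosmetic: the paper fixes $s=t/2$ from the outset and does not spell out the iteration step you mention for absorbing the multiplicative constant into the exponent.
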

\begin{proof} 
Denote $F_{t}(x):=\int \Q^{t}f(x,\xi) d\nu_{x}(\xi)$.
\begin{align*}
\left| \Q^{t}f(x,\xi) - \int f d\hh\right|
&=
\left| 
\Q_{t}f(x,\xi) -\int \Q^{\frac{t}{2}}f d\hh
\right|\\
&\le
\left| 
\Q^{t}f(x,\xi) - \Q^{\frac{t}{2}}F_{\frac{t}{2}}(x)
\right|
+
\left|
\Q^{\frac{t}{2}}F_{\frac{t}{2}}(x)-\int \Q^{\frac{t}{2}}f d\hh
\right|\\
&\le
\left| 
\Q^{\frac{t}{2}}\left(\Q^{\frac{t}{2}}f(x,\xi) - F_{\frac{t}{2}}(x)\right)
\right|
+
\left|
\Q^{\frac{t}{2}}F_{\frac{t}{2}}(x)-\int \Q^{\frac{t}{2}}f d\hh
\right|.
\end{align*}
By Lemma \ref{Doeblin}, the last term of the last inequality decays exponentially:
\begin{align*}
\left|
\Q^{\frac{t}{2}}F_{\frac{t}{2}}(x)
-\int \Q^{\frac{t}{2}}f d\hh
\right|
&=
\left|
\int_{\M_{0}} 
P(t/2, x, y) F_{\frac{t}{2}}(y) 
d \vol(y)
-
\int_{\M_{0}}
F_{\frac{t}{2}}(y)
d\widetilde{m}(y)
\right|
\\
&\le
\|F_{\frac{t}{2}}\|_{\infty}
\int_{\M_{0}}
\left|
P(t/2, x, y)
-
\frac{1}{\vol(\M_{0})}
\right|
d\vol(y)
\\
&\le
\|f\|_{\tau}
e^{-\frac{\lambda_{1}t}{4}}.
\end{align*}
For the first term, it follows from Proposition \ref{CtrH} that
\begin{align*}
\left|
\Q^{\frac{t}{2}}
\left(
\Q^{\frac{t}{2}}f(x,\xi)-F_{\frac{t}{2}}(x,\xi)
\right)
\right|
&\le 
\sup_{y\in \M_{0}}
\left|
\Q^{\frac{t}{2}}f(y,\xi)-F_{\frac{t}{2}}(y,\xi)
\right|\\
&\le
\sup_{y\in\M_{0}}
\int 
\left|
\Q^{\frac{t}{2}}f(y, \xi)-\Q^{\frac{t}{2}}f(y, \eta)
\right|
d \nu_{y}(\eta)\\
&\le
\|f\|_{\tau}e^{-\frac{C_{1}t}{2}}.
\end{align*}
\end{proof}

\section{Ergodic properties of Brownian motions}
In this section, we discuss the thermodynamic formalisms for the harmonic potential, which arises from the Brownian motion and an equidistribution theorem of Brownian paths. Using such ergodic properties of the Brownian motion, we also provide a characterization of the asymptotic harmonicity as an application of the central limit theorem to the ergodic theory of the geodesic flow on $\M$.

\subsection{Harmonic potentials}
We introduce another natural potential $\K$ on $\T^{1}\M$ 
induced from the Brownian motion, 
which we call the \textit{harmonic potential}. 
Define a function $\widetilde{\K}$ on $\T^{1}\MM$ by
\begin{equation*}
\widetilde{\K}(\rv)=-\left.\frac{d}{dt}\right|_{t=0} \log \kk(\gamma_\rv(0), \gamma_{\rv}(t), \rv_+)
\end{equation*} 
where $\rv_{+}$ denote the end point at infinity $\lim_{t\to\infty}\gamma_{\rv}(t)$ of the geodesic $\gamma_{\rv}$ generated by $\rv$ and $\kk(x,y, \xi)$ is the Martin kernel of the Brownian motion on $\MM$. 
Since $\widetilde\K$ is a $\Gamma$-invariant H\"older continuous function on $\T^{1}\MM$ (\cite{Hamenst_dt_1990}),
it induces a H\"older potential, which is denoted by $\K$, on $\T^{1}\M$.

Note that $\K$ has the harmonic measure $(\nu_{x})_{x\in\MM}$ as a Patterson-Sullivan density of dimension $0$. 
Since the harmonic measure does not have atom (\cite{KifLed_1990}, \cite{BenHul_2019}), the set $\Pi_{\Gamma}$ of parabolic fixed points on $\Gamma$ in $\partial\MM$ has countably many points, $\Pi_{\Gamma}$ is a null set for the harmonic measure. 
As the set of conical fixed points $\Lambda_{c}\Gamma=\partial\MM \setminus \Pi_{\Gamma}$ has positive measure with respect to the harmonic measure, the topological pressure of $\K$ vanishes; $P_{\K}=0$  (Corollary 5.10 of \cite{PPS}).
We denote by $\widetilde\nu$ the Gibbs measure on $\T^{1}\MM$ of $\K$ and $(\nu_{x})$. 
Proposition \ref{VP} for $\K$ demonstrates that $\K$ admits an equilibrium state $\nu$ on $\T^{1}\M$ for $\K$ if and only if $\widetilde\nu \left(\T^{1}\M_{0}\right)$ is finite and $\nu$ agrees with the induced measure on $\T^{1}\M$ by $\widetilde\nu$.
From Proposition \ref{SPR} it follows that $\K$ admits an equilibrium state if and only if for every parabolic subgroup $\Pi$ of $\Gamma$,
\begin{equation*}
\sum_{\gamma\in\Pi} 
\frac
{\dist_{}(x, \gamma x)}
{\kk(x, \gamma x, (\rv_{x}^{\gamma x})_{+})} <\infty,
\end{equation*}
where $\rv_{x}^{y}\in\T_{x}^{1}\MM$ such that $\g^{\dist(x,y)}\rv_{x}^{y}\in\T^{1}_{y}\MM$.
We shall provide dynamical aspects of Brownian motions using the ergodic theory of $\nu$.

Recall that given $x\in\MM$ we identify 
$(r, \rv)\in(0,\infty)\times\T^{1}_{x}\MM$ 
with $\exp_{x}(r\rv)\in\MM\setminus\{x\}$
and $g=dr^{2}+\lambda_{x}(r, \rv)g_{\mathbb{S}}$.
Now we denote the density of volume at $z=(r, \rv)$ with respect to the polar coordinate at $x$ by $A_{x}(z)$:
\begin{equation*}
d \vol (z) = A_{x}(z) dr d\vol_{\mathbb{S}}(\rv).
\end{equation*}
Note that $A_{x}(z)= \lambda_{x}^{d-1}(r, \rv)$.
We denote by $\theta(\widetilde\omega, t)$ 
the unit vector in $\T^{1}_{x}\MM$ such that 
$\widetilde\omega_{t}
=(r, \theta)(\widetilde\omega, t)
:=(r(\widetilde\omega, t), \theta(\widetilde\omega, t))$.

The following theorem demonstrates how dynamical invariants and stochastic invariants are related to each other. 
We follow the argument in \cite{Led_1988}, 
but we complete the proof by showing the inequality $\hs \le \ld h_{\nu}$ using the idea in \cite{Led_1987}

\begin{thm}\label{SE} 
If $\K$ admits an equilibrium state $\nu$, then
\begin{equation*}
\hs = \ld h_{\nu}.
\end{equation*}
\end{thm}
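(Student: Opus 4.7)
The plan is to connect the stochastic entropy $\hs$ to an ergodic integral of the harmonic potential $\widetilde{\K}$ along the geodesic flow, and then to apply the ergodic theory of the equilibrium state $\nu$. The first key ingredient is the cocycle identity $\kk(x,z,\xi)=\kk(x,y,\xi)\kk(y,z,\xi)$ for the Martin kernel; differentiating the definition of $\widetilde{\K}$ and integrating along the flow yields
\begin{equation*}
\int_{0}^{T}\widetilde{\K}(\g^{s}\rv)\,ds
=-\log\kk(\gamma_{\rv}(0),\gamma_{\rv}(T),\rv_{+}).
\end{equation*}

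Using the Ancona and Harnack inequalities exactly as in the bound leading to (\ref{GMeq}) from the proof of Theorem \ref{CLT}, one has $\log\G(x,y)=-\log\kk(x,y,\xi)+O(1)$ whenever $y$ lies close to the geodesic ray $[x,\xi)$. Combined with the rough-path property (\ref{roughpath}) and Harnack continuity of $\log\kk$ in its second argument, this allows one to replace $\log\G(x,\widetilde\omega_{t})$ by $-\log\kk(x,\gamma_{\rv_{\infty}}(\ld t),\rv_{\infty}^{+})$ up to an error that is $o(t)$, where $\rv_{\infty}=(x,\widetilde\omega_{\infty})$. The cocycle identity then gives, for $\mathbb{P}_{x}$-a.e. $\widetilde\omega$,
\begin{equation*}
\hs=-\ld\,\lim_{T\to\infty}\frac{1}{T}\int_{0}^{T}\widetilde{\K}(\g^{s}\rv_{\infty})\,ds.
\end{equation*}
Since $\widetilde\omega_{\infty}$ has law $\nu_{x}$ under $\mathbb{P}_{x}$, and $\nu_{x}$ coincides with the spherical measure $\nu_{x}^{\T}$ of the Patterson--Sullivan density of $\K$, the transversal ergodic theorem (Proposition \ref{transversalerg}) evaluates the limit at $\int\K\,d\nu$ almost surely. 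The variational principle $P_{\K}=h_{\nu}+\int\K\,d\nu=0$ (recall $P_{\K}=0$) gives $\int\K\,d\nu=-h_{\nu}$, and hence $\hs=\ld h_{\nu}$.

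The main obstacle is the upper bound $\hs\le\ld h_{\nu}$. The chain above actually yields the lower bound $\hs\ge\ld h_{\nu}$ robustly (one really only needs one side of Ancona's inequality together with the a.s. convergence $-\tfrac{1}{t}\log\G(x,\widetilde\omega_{t})\to\hs$), which is essentially the strategy of \cite{Led_1988}. For the reverse direction the Ancona--Harnack approximation of $\log\G$ by $-\log\kk$ is not uniform on $\MM$ because of the cusps, so I would instead follow \cite{Led_1987} and use the counting characterization of $\hs$ via $N(x,T,\delta)$ established in the preliminaries. The Gibbs estimate (\ref{transversalmeasure}) says that $-\tfrac{1}{T}\log\nu^{\T}_{x}(B(\rv,T,0,\varepsilon))\to h_{\nu}$ for a $\nu^{\T}_{x}$-full set of $\rv$; pushing the corresponding Bowen balls at time $\ld t$ forward to $\M$ by the exponential map and invoking the rough-path property exhibits $\sim e^{\ld t h_{\nu}}$ unit balls capturing a fraction $\ge\delta$ of Brownian endpoints $\widetilde\omega_{t}$, whence $N(x,t,\delta)\lesssim e^{\ld t h_{\nu}}$ and $\hs\le\ld h_{\nu}$.
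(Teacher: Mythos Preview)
Your proposal is correct. In fact your first chain (Ancona + Harnack + cocycle identity + transversal ergodic theorem + $P_{\K}=h_{\nu}+\int\K\,d\nu=0$) already delivers the full equality $\hs=\ld h_{\nu}$, not just the lower bound. The Ancona constant depends only on the pinching bounds $a,b$ and the dimension, so it is uniform over all of $\MM$; likewise the gradient bound $|\nabla_{y}\log\kk|\le C$ from \cite{AndSch_1985} is uniform. Hence both halves of $\log\G(x,y)=-\log\kk(x,y,\xi)+O(1)$ hold with a uniform $O(1)$ for $y$ on the ray $[x,\xi)$, and the $o(t)$ displacement from (\ref{roughpath}) is absorbed on both sides. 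Your worry about cusps is therefore unnecessary, and the separate counting argument for $\hs\le\ld h_{\nu}$ is redundant (though correct).

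This is a genuinely different route from the paper. The paper proves both inequalities via the counting characterization $\hs=\lim_{T}\frac{1}{T}\log N(x,T,\delta)$ together with the Gibbs estimate (\ref{transversalmeasure}) for spherical measures: it covers (respectively, separates) the endpoints $\widetilde\omega_{T}$ by pushing Bowen balls at scale $\ld T$ through $\exp_{x}$ and counts. It never invokes Ancona or the variational identity $P_{\K}=0$ directly. Your approach is shorter and identifies $\hs$ analytically as $-\ld\int\K\,d\nu$; the paper's approach is more combinatorial and makes the link to $N(x,T,\delta)$ explicit without relying on the boundary Harnack machinery. Your fallback argument for the upper bound (Bowen-ball covers giving $N(x,T,\delta)\lesssim e^{\ld h_{\nu}T}$) is essentially the paper's argument for that direction, drawn from the same source \cite{Led_1987}.
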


\begin{proof}
Let $x\in\MM$, $\delta\in \left( 0, \frac{1}{2} \right)$ and $0<\varepsilon, \varepsilon'$.
We denote for each $T>0$,
\begin{align*}
\mathscr{C}_{T} 
:=
\{\widetilde\omega: \
&
\dist(\widetilde\omega_{T}, (\ld T, \widetilde\omega_{\infty}))\le \varepsilon T
\textrm{ and }\\
&
\mu^{\T}_{x}
\{ \rv: 
d_{\ld T}(\rv, \theta(\widetilde\omega, \infty))\le \varepsilon'
\}
\le 
e^{-(\ld h_{\nu}-\varepsilon)T}
\},
\\
\mathscr{D}_{T}
:=
\{\widetilde\omega:
&
\dist(\widetilde\omega_{T}, (\ld T, \widetilde\omega_{\infty}))\le \varepsilon T
\textrm{ and }\\
&
\mu^{\T}_{x}
\{ \rv: 
\dist \left(\gamma_{\rv}(\ld T), \gamma_{\theta(\widetilde\omega, \infty)}(\ld T)\right)\le \varepsilon'
\}
\ge 
e^{-(\ld h_{\nu}+\varepsilon)T}
\}.
\end{align*}

For every $T$ large enough, 
$\mathbb{P}_{x}(\mathscr{C}_{T})\ge 2\delta$ for some $\varepsilon'>0$ 
by (\ref{roughpath}).
Thus if we fix a sufficiently large $T$ and 
choose $E\subset\MM$ with $\card E = N(x, T, 1-\delta)$, 
\begin{equation*}
\mathbb{P}_{x}\{ \dist(\widetilde\omega_{T}, E)\le 1\}\ge 1-\delta.
\end{equation*}
We note that 
$E_{\infty}:=
\{ \theta(\widetilde\omega,\infty):
\widetilde\omega\in \mathscr{C}_{T}, \dist( \widetilde\omega_{T}, E)\le 1
\}$
has the $\mu^{\T}_{x}$-measure greater than $\delta$ and
$\{\gamma_{\theta(\widetilde\omega, \infty)}(\ld T): 
\widetilde\omega\in \mathscr{C}_{T}, \dist( \widetilde\omega_{T}, E)\le 1
\}$
is covered by balls on the sphere of radius $\varepsilon'$ less than $N(x, T, 1-\delta) C^{\varepsilon T}$. 
($C$ is the maximal cardinal of covers for the intersection of the sphere of radius $\ld T$ and $(\varepsilon+1) T$ balls by $\varepsilon'$ balls on the sphere.) 
Such ball $O$ in the sphere of radius $\varepsilon'$ is the set of base points of vectors in 
$\g^{\ld T}V$ where $V=\{\rv: d_{\ld T} (\rv, \rw)\le \varepsilon'\}$ for some $\rw$.
We conclude that since such $V$ has the $\mu_{\T^{1}\M}$-measure less than $e^{-(\ld h_{\nu}-\varepsilon)T}$,
\begin{align*}
\delta
\le
\mu^{\T}_{x} (E_{\infty})
\le N(x, T, 1-\delta) e^{ -T[\ld h_{\nu} -\varepsilon-\varepsilon\log C]}.
\end{align*}
Thus we have $\ld h_{\nu} \le \lim_{T\to\infty} \frac{1}{T} \log N(x, T, 1-\delta)$.

Choose a smallest set $E\subset \MM$ such that $\dist(\widetilde\omega_{T}, E)\le 1$ for each 
$\widetilde\omega\in\mathscr{D}_{T}$ and
a maximal $\varepsilon'$-separeted set 
$F\subset \{\gamma_{\theta(\widetilde\omega, \infty)}(\ld T): \widetilde\omega \in \mathscr{D}_{T}\}$.
Since 
$\mathscr{D}_{T}\subset\{ \widetilde\omega: \dist(\widetilde\omega_{T}, E)\le 1\}$, 
$\card (E) \ge N(x, T, \mathbb{P}_{x}(\mathscr{D}_{T}))$
and 
$\card (F) \le C' e^{\ld h_{\nu}T+\varepsilon T}$.
($C'$ is the maximal number of overlappings.)
For every $f\in F$ if we denote 
\begin{equation*}
N(f):=\{ e \in E: 
\exists \, \widetilde\omega \in \mathscr{D}_{T} \textrm{ s.t. }
\dist(f, \gamma_{\theta(\widetilde\omega, \infty)}(\ld T))\le \varepsilon', 
\dist(e, \widetilde\omega_{T}) \le 1\}.
\end{equation*}
Since $\cup_{e\in N(f)} B(e, 1) \subset B(f, \varepsilon T +\varepsilon'+1)$,
there exists $C''>0$ such that
\begin{equation*}
\card{N(f)} 
\le \sup_{e\in E, f \in F} \frac{\vol\left( B(f, \varepsilon T+\varepsilon'+1)\right)}{\vol( B(e, 1))} 
\le e^{C''\varepsilon T}.
\end{equation*}
Therefore, 
\begin{equation*}
N(x, T, \mathbb{P}_{x}(\mathscr{D}_{T}) )\le \card (E) \le \exp(C''\varepsilon T)\card F\le e^{T[\ld h_{\nu} + (1+ C'')\varepsilon ]}.
\end{equation*}
\end{proof}

The following proposition means that Brownian paths are equidistributed with respect to $\nu$;
Geodesics which Brownian paths roughly follow are generic with respect to $\nu$.
The proof follows the argument for compact manifolds (\cite{Led_1988}).

\begin{prop}\label{eqd}
Assume that $\K$ admits an equilibrium state $\nu$.
For every
$x\in\MM$,
for each bounded continuous function
$\phi \in \C_{b}(\T^{1}\M)$
and for
$\mathbb{P}_{x}$-a.e. 
$\widetilde\omega$,
\begin{equation*}
\int \phi \, d\nu
=
\lim_{t\to\infty}
\frac{1}{\ld t}
\int_{0}^{r(\widetilde\omega, t)}
\widetilde{\phi}( \g^{s}\theta(\widetilde\omega, t))
\,ds.
\end{equation*}
\end{prop}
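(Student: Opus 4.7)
The plan is to deduce the conclusion from the transversal ergodic theorem (Proposition \ref{transversalerg}) applied to the equilibrium state $\nu$ of $\K$. Since the Patterson--Sullivan density for $\K$ of dimension $P_{\K}=0$ is the harmonic family $(\nu_x)$, the spherical measure $\mu^{\T}_x$ at $x$ is the push-forward of $\nu_x$ under the identification $\T^1_x\MM\cong\partial\MM$, $\rv\mapsto \rv_+$. The endpoint $\widetilde\omega_\infty$ has law $\nu_x$ under $\mathbb{P}_x$, so $\theta(\widetilde\omega,\infty)$ has law $\mu^{\T}_x$, and Proposition \ref{transversalerg} yields, for every $\phi\in\C_b(\T^1\M)$ and $\mathbb{P}_x$-a.e. $\widetilde\omega$,
\[
\frac{1}{T}\int_0^T \widetilde\phi(\g^s \theta(\widetilde\omega,\infty))\,ds \;\longrightarrow\; \int \phi\,d\nu.
\]
Setting $T=\ld t$ and combining this with $r(\widetilde\omega,t)/t\to\ld$ and $\|\widetilde\phi\|_\infty<\infty$, the same limit holds with $\frac{1}{\ld t}\int_0^{r(\widetilde\omega,t)}$ replacing $\frac{1}{T}\int_0^T$.

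The substantive step is to replace $\theta(\widetilde\omega,\infty)$ by $\theta(\widetilde\omega,t)$ in the integrand. Assume first that $\phi$ is uniformly continuous on $\T^1\M$, so that its $\Gamma$-invariant lift $\widetilde\phi$ is uniformly continuous on $\T^1\MM$ in the Sasaki metric. Write $T_t:=r(\widetilde\omega,t)$ and $R_t:=\dist(\widetilde\omega_t,\exp_x(T_t\,\theta(\widetilde\omega,\infty)))$; the rough-path estimate (\ref{roughpath}) gives $R_t=o(t)$ almost surely. The two geodesic segments $s\mapsto\gamma_{\theta(\widetilde\omega,t)}(s)$ and $s\mapsto\gamma_{\theta(\widetilde\omega,\infty)}(s)$, both defined on $[0,T_t]$, share the initial point $x$ and have endpoints within $R_t$ of each other, so Rauch's comparison theorem in curvature $\le -a^2$ bounds their separation at time $s$ by $C R_t\,\sinh(as)/\sinh(aT_t)\le C R_t e^{-a(T_t-s)}$. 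Splitting the integral at $s=T_t-S_t$ with $S_t:=a^{-1}\log(R_t/\varepsilon)$ and using the uniform continuity modulus $\omega_{\widetilde\phi}$ of $\widetilde\phi$,
\[
\frac{1}{\ld t}\int_0^{T_t}\bigl|\widetilde\phi(\g^s\theta(\widetilde\omega,t))-\widetilde\phi(\g^s\theta(\widetilde\omega,\infty))\bigr|\,ds\;\le\;\omega_{\widetilde\phi}(\varepsilon)+\frac{2\|\widetilde\phi\|_\infty S_t}{\ld t}.
\]
Since $S_t=O(\log t)$ almost surely, the right-hand side tends to $\omega_{\widetilde\phi}(\varepsilon)$, which vanishes as $\varepsilon\to 0$. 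Combined with the previous paragraph, this gives the equidistribution statement for uniformly continuous $\phi$.

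To extend to arbitrary $\phi\in\C_b(\T^1\M)$, fix $\varepsilon>0$ and pick a compact $K_\varepsilon\subset\T^1\M$ with $\nu(K_\varepsilon^c)<\varepsilon$, together with $\phi_\varepsilon\in\C_c(\T^1\M)$ satisfying $\|\phi-\phi_\varepsilon\|_\infty\le 2\|\phi\|_\infty$ and $|\phi-\phi_\varepsilon|\le\varepsilon$ on $K_\varepsilon$. Applying the uniformly continuous case to $\phi_\varepsilon$, and controlling the remainder by applying Proposition \ref{transversalerg} to a uniformly continuous majorant of $\mathbf{1}_{K_\varepsilon^c}$ along the geodesic $\gamma_{\theta(\widetilde\omega,\infty)}$, and letting $\varepsilon\to 0$ concludes the proof. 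The hard part is the fellow-traveling estimate, where one must marry the sublinear defect $R_t=o(t)$ from (\ref{roughpath}) with the exponential contraction of same-endpoint geodesics provided by Rauch's comparison; the approximation step also requires care because $\T^1\M$ is non-compact and bounded continuous functions need not be uniformly continuous.
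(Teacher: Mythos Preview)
Your argument is correct and follows the same route as the paper: apply Proposition~\ref{transversalerg} to the harmonic spherical measure (since $\theta(\widetilde\omega,\infty)$ has law $\mu^{\T}_x$ under $\mathbb{P}_x$), then pass from $\theta(\widetilde\omega,\infty)$ to $\theta(\widetilde\omega,t)$ via the geodesic-spreading comparison under $\sec\le -a^2$ combined with the sublinear defect $R_t=o(t)$ from~(\ref{roughpath}). The paper packages the fellow-traveling step more tersely: using the H\"older equivalence of the Sasaki metric with $\dist_0(\rv,\rw)=\sup_{0\le r\le 1}\dist(\gamma_\rv(r),\gamma_\rw(r))$ it bounds the whole integral $\int_0^{T_t}|\widetilde\phi(\g^s\rv)-\widetilde\phi(\g^s\rw)|\,ds$ by $C(a,\phi)\,R_t$ (implicitly for H\"older $\phi$, since $\int_0^{T_t}e^{-a\alpha(T_t-s)}\,ds$ is uniformly bounded), so dividing by $\ld t$ finishes directly. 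Your splitting at $T_t-S_t$ with $S_t=O(\log t)$ buys you the result for merely uniformly continuous $\phi$, and your compact-approximation step extending to all of $\C_b$ is a genuine addition that the paper's proof, as written, does not supply. One detail worth making explicit in your write-up: the base-point bound $CR_te^{-a(T_t-s)}$ must be promoted to a bound on the Sasaki distance between $\g^s\theta(\widetilde\omega,t)$ and $\g^s\theta(\widetilde\omega,\infty)$ before you invoke $\omega_{\widetilde\phi}$; the same $\dist_0$--Sasaki H\"older equivalence does this at the cost of a H\"older exponent and a harmless shift of the split point.
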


\begin{proof}
For $\rv, \rw\in \T^{1}\MM$, let $d_{t}(\rv, \rw)$ be the distance on the geodesic sphere $S(x, t)$ between $\g^{t} \rv$ and $\g^{t} \rw$. 
Then 
\begin{equation*}
d_{t}(\rv, \rw)\le d_{s}(\rv, \rw)\frac{\sinh(at)}{\sinh(as)}
\end{equation*} 
for every $0<t<s$ due to the curvature upper bound $\sec_{\MM}\le -a^{2}<0$.
Since the Sasaki distance is H\"older equivalent to the distance 
$\dist_{0}(\rv, \rw):= \sup_{0\le t\le1}\dist\left( \gamma_{\rv}(t), \gamma_{\rw}(t)\right)$, 
\begin{align*}
\left|
\int_{0}^{t} \widetilde\phi\circ\g^{s}(\rv) ds
-
\int_{0}^{t} \widetilde\phi\circ\g^{s}(\rw) ds
\right|
\le
C(a, \phi) \dist( \gamma_{\rv}(t), \gamma_{\rw}(t) ).
\end{align*}
Hence the proposition follows from of Proposition \ref{transversalerg} and the limit (\ref{roughpath}): 
for $\mathbb{P}_{x}$-a.e. $\widetilde\omega$,
\begin{align*}
&\lim_{t\to\infty}
\left| 
\frac{1}{\ld t}
\int_{0}^{r(\widetilde\omega, t)} 
\widetilde\phi(\g^{s}\theta(\widetilde\omega, t))
ds
-\int\phi d\nu
\right|\\
&\le
\lim_{t\to\infty}
\frac{1}{\ld t}
\left|
\int_{0}^{r(\widetilde\omega, t)}
\widetilde\phi(\g^{s}\theta(\widetilde\omega, t))
ds
-\int_{0}^{\ld t}
\widetilde\phi(\g^{s}\theta(\widetilde\omega, \infty))
ds
\right|
+
\lim_{t\to\infty}
\left|
\frac{1}{\ld t}
\int_{0}^{\ld t}
\widetilde\phi(\g^{s}\theta(\widetilde\omega, \infty))
ds
-
\int\phi d\nu
\right|\\
&=
\lim_{t\to\infty}
\frac{1}{\ld t}
\left|
\int_{0}^{r(\widetilde\omega, t)}
\widetilde\phi(\g^{s}\theta(\widetilde\omega, t))
ds
-
\int_{0}^{r(\widetilde\omega, t)}
\widetilde\phi(\g^{s}\theta(\widetilde\omega, \infty))
ds
\right|
+0
\\
&\le
\lim_{t\to\infty}
\frac{C(a, \phi)}{\ld t}
\dist (\widetilde\omega_{t}, (r, \theta)(\widetilde\omega, t))
=0.
\end{align*}
We used (\ref{transversalpterg}) of Proposition \ref{transversalerg} in the equation and (\ref{roughpath}) in the last inequality.
\end{proof}

The equidistribution of Brownian paths provides another stochastic invariant, the exponential growth along Brownian paths.
It helps understanding the relation between the harmonic measure class and the Lebesgue measure class.
The proof in \cite{Led_1988} extends to the finite-volume case.

\begin{thm} 
For each $x\in\MM$ and for $\mathbb{P}_{x,}$-a.e. $\widetilde\omega$, if $\K$ admits an equilibrium state $\nu$, the following limit exists:
\begin{equation*}
\Upsilon=\lim_{t\to\infty} \frac{1}{t}\log {A}(x, \widetilde\omega_{t})=-\ld \int \J d\nu.
\end{equation*}
\end{thm}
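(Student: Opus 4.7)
The plan is to express $\log A(x,\widetilde\omega_t)$ as an integral of the geometric potential $\widetilde\J$ along the geodesic ray from $x$ through $\widetilde\omega_t$, up to a bounded error, and then apply the equidistribution theorem of Proposition \ref{eqd} to $\phi = \J$.

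The key geometric identity I would establish is
\begin{equation*}
\log A_x(\gamma_\rv(r)) = -\int_0^r \widetilde\J(\g^s \rv)\,ds + O(1)
\end{equation*}
uniformly in $\rv\in\T^1_x\MM$ and $r\ge 1$. Its derivation begins from $\frac{\partial}{\partial r}\log A_x(r,\rv) = m_x(r,\rv)$, where $m_x(r,\rv) := \Delta_y\dist(y,x)|_{y=\gamma_\rv(r)}$ is the mean curvature of the sphere $S(x,r)$ at $\gamma_\rv(r)$. By Proposition \ref{C2conv} applied to $z_n = \gamma_\rv(r+n)$, the mean curvature $m_x(r,\rv)$ converges to the horosphere mean curvature $H(\g^r\rv) := \Delta_y\bb(y,x,\rv_+)|_{y=\gamma_\rv(r)}$; and this mean curvature equals $-\widetilde\J(\g^r\rv)$ by the standard identification of the divergence of the geodesic spray on a horosphere leaf with the infinitesimal leafwise volume expansion rate, which is $-\widetilde\J$ by the definition of the geometric potential.

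The quantitative step is to bound $|m_x(s,\rv) - H(\g^s\rv)|$ uniformly in $\rv$ and $s\ge 1$ by $C e^{-2as}$. Both $m_x(s,\rv)$ and $H(\g^s\rv)$ arise as traces of shape operators satisfying the same matrix Riccati equation along $\gamma_\rv$, with different initial data: the sphere's shape operator blows up at $s=0$, while the horosphere's shape operator is the stable equilibrium at infinity. The pinched curvature bound $-b^2\le \sec_{\MM}\le -a^2$ forces exponential convergence toward the stable fixed point at rate at least $2a$. The short-range contribution near $s=0$ is absorbed into the $O(1)$ error since $\log A_x(\gamma_\rv(1))$ is uniformly bounded in $\rv$ by the pinched curvature hypothesis.

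With the geometric identity in hand, since $\widetilde\omega_t = (r(\widetilde\omega,t),\theta(\widetilde\omega,t))$ in polar coordinates at $x$, taking $\rv = \theta(\widetilde\omega,t)$ and $r = r(\widetilde\omega,t)$ gives
\begin{equation*}
\frac{1}{t}\log A(x,\widetilde\omega_t) = -\frac{1}{t}\int_0^{r(\widetilde\omega,t)}\widetilde\J(\g^s\theta(\widetilde\omega,t))\,ds + o(1).
\end{equation*}
Since $\J$ is H\"older continuous and bounded (thanks to pinched curvature together with the derivative bound $|\nabla\sec|\le c$, which confines $\widetilde\J$ between the constant-curvature comparison values), Proposition \ref{eqd} applied to $\phi = \J$ identifies the right-hand side in the limit as $-\ld\int \J\,d\nu$, completing the proof. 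The main obstacle is precisely the uniform exponential comparison between sphere and horosphere mean curvatures along arbitrary geodesics; this hinges on control of the Riccati flow under pinched curvature, but is independent of the finite-volume hypothesis and its cusps, since it is a purely fiberwise geometric estimate on $\MM$.
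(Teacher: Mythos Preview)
Your proposal is correct and follows essentially the same two-step structure as the paper's proof: establish a uniform comparison
\[
\log A_x(\gamma_\rv(r)) = -\int_0^r \widetilde\J(\g^s\rv)\,ds + O(1),
\]
then invoke Proposition~\ref{eqd} for $\phi = \J$. The only difference lies in how the geometric identity is justified. The paper argues at the level of the tangent bundle: since the angle between $E^{s}$ and the flow-image $\T_\rv\g^{t}(\T_\rv\T^1_x\MM)$ of the sphere tangent is uniformly bounded away from zero, the ratio $\det \T\g^{t}|_{\T_\rv\T^1_x\MM}\big/\det \T\g^{t}|_{E^{su}(\rv)}$ is uniformly bounded, which immediately yields the $O(1)$ comparison. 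You instead work at the level of second fundamental forms, comparing the sphere and horosphere shape operators via the Riccati equation under pinched curvature to obtain an explicit exponential rate $|m_x(s,\rv)-H(\g^s\rv)|\le Ce^{-2as}$. Your route is more quantitative and makes the uniformity in $\rv$ completely transparent, while the paper's transversality argument is quicker but leaves the uniformity somewhat implicit; both are standard and either suffices.
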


\begin{proof}
Let $\T_{\rv}\g^{t}$ be the tangent map of the flow map $\g^{t}$ at $\rv=(x, \xi) \in \T^{1}\MM$.
Since the angle between stable distribution 
$E^s(\rv)$ 
and 
$\T_{\rv}\g^{t}(\T_{\rv}\T^{1}_{x}\MM)$, 
where $\T_{\rv}\T^{1}_{x}\MM$ is the tangent space of the sphere $\T^{1}_{x}\MM$, 
is bounded away from zero uniformly on $\rv$ and $t>0$, 
\begin{align*}
\lim_{t\to\infty} \frac{1}{t} \log A(\pi\rv, \pi\g^{t}\rv) 
&=\lim_{t\to\infty} \frac{1}{t} \log \det \T\g^{t}|_{\T_{\rv}\T^{1}_{x}\MM}\\
&=\lim_{t\to\infty} \frac{1}{t} \log \det \T\g^{t}|_{E^{uu}(\rv)}\\
&=\lim_{t\to\infty}-\frac{1}{t}\int_{0}^{t} \J(\g^{s} \rv) ds.
\end{align*}

Therefore, by Proposition \ref{eqd}, 
for $\mathbb{P}_{x}$-a.e. $\widetilde\omega$, 
\begin{align*}
\lim_{t\to\infty} 
\frac{1}{t}
\log 
A(x, \widetilde\omega_{t})
=
\lim_{t\to\infty}
-\frac{1}{t}
\int_{0}^{r(\widetilde\omega, t)}
\J(\g^{s} \theta(\widetilde\omega_{t}))
ds
= -\ld \int \J d\nu.
\end{align*}
\end{proof}

Since $h_{\nu} \le \htop$ and $h_{\nu}+\int \J d\nu \le P_{\J}=0$,
from the previous theorems, we have the following theorem as a corollary.

\begin{thm}
Denote the topological entropy of $(\T^{1}\M, (\g^{t}))$ by $\htop$.
\begin{enumerate}
\item
$\hs \le \ld \htop$.
The equality holds if and only if the harmonic measure class and the visibility class coincide.
\item
$\hs \le \Upsilon = -\ld \int \J \,d\nu$.
The equality holds if and only if the harmonic measure class and the Lebesgue class agree.
\end{enumerate}
\end{thm}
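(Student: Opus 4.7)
The plan is to derive both inequalities as immediate consequences of the variational principle combined with Theorem \ref{SE}, which identifies $\hs$ with $\ld\, h_{\nu}$, where $\nu$ denotes the equilibrium state of $\K$ (harmonic potential).

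For part (1), I would first invoke the definition of topological entropy: $h_{\mu} \le \htop$ for every $\g^{t}$-invariant probability measure $\mu$. Applied to $\nu$ this yields $\hs = \ld\, h_{\nu} \le \ld\, \htop$. For the equality case, $\hs = \ld\, \htop$ forces $h_{\nu} = \htop$, so $\nu$ is a measure of maximal entropy. By Proposition \ref{VP}, a measure of maximal entropy is the (unique) equilibrium state of the zero potential, whose Patterson-Sullivan density lies by definition in the visibility class; since the Patterson-Sullivan density of $\K$ is the harmonic measure class, the two classes must coincide. Conversely, if the harmonic class equals the visibility class, then the Gibbs construction produces the same invariant measure from both Patterson-Sullivan densities, so $\nu$ is the Bowen-Margulis measure and $h_{\nu} = \htop$.

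For part (2), the strategy is to apply the variational principle to the geometric potential $\J$. Since the Liouville measure $\mu_{\J}$ is the equilibrium state for $\J$ (as recalled at the end of Section \ref{Pre}) and satisfies $h_{\mu_{\J}} = -\int \J \, d\mu_{\J}$ by Pesin's formula, one obtains $P_{\J} = 0$. The variational principle applied to $\nu$, combined with $\hs = \ld\, h_{\nu}$ and $\ld > 0$, then gives
\[
\hs \;=\; \ld\, h_{\nu} \;\le\; -\ld \int \J \, d\nu \;=\; \Upsilon.
\]
Equality forces $h_{\nu} + \int \J\,d\nu = P_{\J}$, i.e.\ $\nu$ is an equilibrium state for $\J$, and by the uniqueness clause of Proposition \ref{VP} this means $\nu = \mu_{\J}$. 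In terms of Patterson-Sullivan densities, this is precisely the statement that the harmonic measure class agrees with the Lebesgue class.

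No individual step is particularly difficult once Theorem \ref{SE} is in hand. The main bookkeeping point to be careful about is ensuring that Proposition \ref{VP} is correctly invoked in the equality cases so that coincidence of measure classes is equivalent to coincidence of the equilibrium states; this uses both the existence of the Bowen-Margulis measure (for the equality in (1)) and the existence of the Liouville equilibrium state (for (2)), both granted in our setting by the standing assumptions together with the result of \cite{Riq_2018} cited at the end of Section \ref{Pre}.
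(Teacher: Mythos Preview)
Your approach is essentially the same as the paper's: both derive the inequalities directly from Theorem \ref{SE} together with $h_{\nu}\le\htop$ and $h_{\nu}+\int\J\,d\nu\le P_{\J}=0$, and both identify the equality cases via uniqueness of equilibrium states (Proposition \ref{VP}). The paper's written proof is in fact terser than yours---it only spells out the equality case in (2)---so your proposal supplies the parallel argument for (1) that the paper leaves implicit.

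One small correction: you justify the existence of the Bowen--Margulis measure by appealing to \cite{Riq_2018}, but that reference concerns the geometric potential $\J$, not the zero potential; the paper makes no standing assumption that the zero potential admits an equilibrium state. Fortunately your argument does not actually need this a priori. In the forward direction of (1), once $h_{\nu}=\htop$ the measure $\nu$ itself is a measure of maximal entropy, so an equilibrium state for the zero potential exists and by Proposition \ref{VP} must arise from the visibility Patterson--Sullivan density; in the converse direction, coincidence of the classes means the harmonic density already lies in the visibility class, so the Gibbs measure built from it---which is $\nu$ and is finite---is the Bowen--Margulis measure. Either way no separate existence input is required.
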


\begin{proof}
Due to Theorem \ref{SE}, the equality 
$\Upsilon=\hs$ 
is equivalant to
\begin{equation*}
P(\J, \nu)=h_{\nu}+\int \J d\nu=0,
\end{equation*}
which holds if and only if $\nu$ is the equilibrium state for $\J$. 
\end{proof}

\subsection{Proof of Theorem \ref{AsympH}}
We conclude this section with the proof of Theorem \ref{AsympH}. 
We begin with the proof of the integral equation for the foliated Laplacian (\cite{Yue_1991}): 
for every bounded function $\varphi$ uniformly $\C^{2}$ on stable leaves,
\begin{equation} \label{yue}
\int
2
\langle 
\nabla \log \kk, 
\nabla \varphi
\rangle d\hh
= -\int \Delta_{s} \varphi d\hh.
\end{equation}
Consider the function 
$\Phi(y)
:= 
\int_{\partial\MM} \varphi(y, \xi) d\nu_{y}(\xi)
=
\int_{\partial\MM} \varphi(y, \xi) \kk(x, y, \xi) d\nu_{x}(\xi)$. 
Applying the Laplacian, 
since $\Delta_{y} \kk(x, y, \xi)=0$ we have 
\begin{align*}
\Delta \Phi (y)
&= 
\int_{\partial\MM} 
\kk(x, y, \xi) \Delta_{s} \varphi (y, \xi)
+
2
\langle
\nabla_{y} \varphi(y, \xi) , \nabla_{y} \kk(x, y, \xi)
\rangle
d\nu_{x}(\xi)\\
&=
\int_{\partial\MM}
\Delta_{s}\varphi(y, \xi)
+
2
\langle
\nabla_{y} \varphi(y, \xi), \nabla_{y} \log \kk (x, y, \xi)
\rangle
d\nu_{y}(\xi).
\end{align*}
Thus integrating with respect to $\vol$ and using Green's formula, since $\Phi$ is uniformly $\C^{2}$, 
\begin{align*}
&\int_{\T^{1}\M} 
\Delta_{s}\varphi (y, \xi)
+ 
2 \langle
\nabla_{y} \varphi(y, \xi) , \nabla_{y} \log \kk(x, y, \xi)
\rangle
d\hh (y, \xi)\\
&=
\int_{\M} \Delta\Phi(x) d\vol(x)\\
&=
\lim_{\varepsilon \to 0} \int_{\M_{\varepsilon}}\Delta\Phi(x) d\vol(x)\\
&=\lim_{\varepsilon \to 0} \int_{\partial \M_{\varepsilon}} \langle \nabla \Phi , \textbf{n}_{\varepsilon}\rangle
=0,
\end{align*}
where $\M_{\varepsilon}=\{ x\in \M: inj(x) \ge \varepsilon\}$ and $\textbf{n}_{\varepsilon}$ is the unit normal vector on $\partial \M_{\epsilon}$.

From the integral formula (\ref{yue}) for the foliated Laplacian, it follows that
\begin{equation*}
\sigma_{\kk}^{2}
=
2\int_{\T^{1}\M} \|\nabla \log \kk(x, \cdot, \xi) + \nabla u_{\kk}\|^{2} d\hh
=
2\int \| \nabla \log \kk\|^{2}+\| \nabla u_{\kk}\|^{2} d\hh,
\end{equation*}
since $\int \Delta_{s} u_{\kk} d\hh=0$.
Since $\int \| \nabla \log \kk \|^{2} d\hh= \hs$ (Proposition \ref{intcha}),
\begin{equation*}
\sigma_{\kk}^{2}-2\hs=2\int \| \nabla u_{\kk}\|^{2} d\hh \ge 0, 
\end{equation*}
and the equality holds if and only if $u_{\kk}$ is constant.
Since $u_{\kk}$ is the solution of the leafwise heat equation $\Delta_{s}u=\|\nabla\log \kk\|^{2}-\hs$,
$u_{\kk}$ is constant if and only if $\|\nabla \log \kk\|$ is constant and equal to $\hs$.

First we verify that $\|\nabla\log\kk\|^{2}=\hs$ implies the asymptotic harmonicity of $\M$. 
\begin{align*}
\hs 
&\le \ld \htop
\le \sup_{\mu}-\ld \int \K d\mu \\
&=\sup_{\mu} \ld \int \langle X, \nabla\log\kk \rangle d\mu 
\le \sup_{\mu} \ld \left|\int \|\nabla\log\kk\|^{2} d\mu\right|^{1/2}
= \ld\sqrt{\hs}\le \hs,
\end{align*}
where $X(x, \xi) := (x, \xi)$ and the supremum taken among invariant measures. 
Hence we have equalities and $\nu$ is the measure of maximal entropy. 
Replacing the supremum of integrations by integrations with respect to $\nu$, we have 
\begin{equation*}
\int \langle X, \nabla\log\kk \rangle d\mu = \left|\int \|\nabla\log\kk\|^{2} d\mu\right|^{1/2},
\end{equation*} 
which occurs if and only if $X=\nabla\log\kk$. 
Therefore the mean curvature of the stable horosphere $\mathrm{div}X=\Delta \log \kk = -\|\nabla \log \kk\|^{2}= -h$ is constant.

Conversely, if $\M$ is asymptotically harmonic, the geometric potential $\J=\mathrm{div}X$ is constant.
Since $P_{\J}=0$, the Liouville measure is the measure of maximal entropy and $\J=\htop$.
For $x, y\in\MM$ and $\xi\in\partial\MM$, if we write 
\begin{equation*}
\Psi_{x,\xi}(y) = \exp (-\htop\bb(y, x, \xi)),
\end{equation*} 
then we have $\Delta \Psi_{x,\xi}=0$ and $\lim_{y\to \eta} \Psi_{x, \xi}=0$ if $\eta \ne \xi$ and $\infty$ if $\eta=\xi$.
Hence $\kk(x, y, \xi) = \Psi_{x,\xi}(y)$ and $\| \nabla \log \kk\|^{2}=\htop^2$.
It follows that the harmonic class and the visibility class coincide, which implies $\hs =\ld \htop$.
Since $\ld = -\int \mathrm{div} X dm^{\Q}=\htop$, $\hs= \htop^{2}= \|\nabla \log \kk\|^{2}$. 
Therefore we have $\sigma_{\kk}^{2}=2\hs$.
This completes the proof of Theorem \ref{AsympH}.


\begin{thebibliography}{4}
\bibitem{Ancona_1987}
\textsc{Ancona, A.} (1987). Negatively curved manifolds, elliptic operators, and the Martin boundary. \textit{Ann. of Math.} \textbf{125} 495--536. 

\bibitem{AndSch_1985}
\textsc{Anderson, M.T.} and \textsc{Schoen, R.}  (1985). Positive harmonic functions on complete manifolds of negative curvature. \textit{Ann. of Math.}  \textbf{121} 429--461. 

\bibitem{Bal_1995}
\textsc{Ballmann W.} (1995).
\textit{Lectures on spaces of nonpositive curvature}. \textit{Oberwolfach Semin} \textbf{25}. Birkh\"auser/Springer, Basel.

\bibitem{BFL_1992}
\textsc{Benoist, Y., Foulon, P.} and \textsc{Labourie, F.} (1992). Flots d'Anosov \'{a} distributions stable et instable diff\'{e}rentiables. \textit{J. Amer. Math. Soc.} \textbf{5} 33--74.

\bibitem{BenHul_2019}
\textsc{Benoist, Y.} and \textsc{Hulin, D.} (2019). Harmonic measures on negatively curved manifolds. \textit{Ann. Inst. Fourier (Grenoble)} \textbf{69} 2951--2971.

\bibitem{BriHae}
\textsc{Bridson, M.R.} and \textsc{Haeflinger, A.} (2013). \textit{Metric spaces of non-positive curvature}. \textit{Grundlehren Math. Wiss.} \textbf{319}. Springer, Berlin. 

\bibitem{CanCon}
\textsc{Candel, A.} and \textsc{Conlon, L.} (2003). \textit{Foliations II} \textit{Grad. Stud. Math.} \textbf{23}. Amer. Math. Soc., Providence, RI.

\bibitem{ChaK_1991}
\textsc{Chavel, I.} and \textsc{Karp, L.} (1991). Large time behavior of the heat kernel: the parabolic $\lambda$-potential alternative. \textit{Comment. Math. Helv.} \textbf{66} 541--556.

\bibitem{DPPS_2017}
\textsc{Dal'bo, F.}, \textsc{Pein\'{e}, M}, \textsc{Picaud, J.-C.} and \textsc{Sambusetti, A.} (2017). Convergence and counting in infinite measure. \textit{Ann. Inst. Fourier (Grenoble)} \textbf{67} 483--520.

\bibitem{DPPS_2009}
\textsc{Dal'bo, F.}, \textsc{Pein\'{e}, M}, \textsc{Picaud, J.-C.} and \textsc{Sambusetti, A.} (2009). On the growth of nonuniform lattices in pinched negatively curved manifolds \textit{J. Reine. Angew. Math.} \textbf{627} 31--52.

\bibitem{Dodziuk_1987}
\textsc{Dodziuk, J.} (1987). A lower bound for the first eigenvalue of a finite-volume negatively curved manifold. \textit{Bull. Braz. Math. Soc. (N.S.)} \textbf{18} 23--34.

\bibitem{Don_1987}
\textsc{Donnelly, H.} (1987). Essential spectrum and heat kernel. \textit{J. Funct. Anal.} \textbf{75} 362-381.

\bibitem{EngNag}
\textsc{Engel, K.-J.} and \textsc{Nagel, R.} (1999). \textit{One-parameter semigroups for linear evolution equations}. \textit{Grad. Texts in Math.} \textbf{194} Springer, New York.

\bibitem{FouLab_1992}
\textsc{Foulon, P.} and \textsc{Labourie, F.} (1992). Sur les vari\'{e}t\'{e}s compactes asymptotiquement harmoniques. \textit{Invent. Math.} \textbf{69} 375--392.

\bibitem{Franco_1977}
\textsc{Franco, E.} (1977). Flows with unique equilibrium states. \textit{Amer. J. Math.} \textbf{99} 486--514.

\bibitem{Garnett_1983}
\textsc{Garnett, L.} (1983). Foliations, the ergodic theorem and Brownian motion. \textit{J. Funct. Anal.} \textbf{51} 285--311.

\bibitem{Guivarch_1981}
\textsc{Guivarc'h, Y.} (1981) Mouvement brownien sur les rev\^{e}tements d'une vari\'{e}t\'{e} compacte. \textit{C. R. Math. Acad. Sci. Paris.} \textbf{292} 851--853.

\bibitem{Grigoryan_1994}
\textsc{Grigor'yan, A.} (1994) Heat kernel upper bounds on a complete non-compact manifold. \textit{Rev. Mat. Iberoam.} \textbf{10} 395--452.

\bibitem{Hamenst_dt_1990}
\textsc{Hamenst\"{a}dt, U.} (1990). An explicit description of harmonic measure. \textit{Math. Z.} \textbf{205} 287--299.

\bibitem{Ham_1997}
\textsc{Hamenst\"{a}dt, U.} (1997). Harmonic measures for compact negatively curved manifolds \textit{Acta Math.} \textbf{178} 39--107.

\bibitem{Hel_1982}
\textsc{Hellund, I. S.} (1982). Central limit theorems for martingales with discrete or continuous time. \textit{Scand. J. Stat.} \textbf{9} 79--94.

\bibitem{Hsu}
\textsc{Hsu, E. P.} (2002). \textit{Stochasic analysis on manifolds}. \textit{Grad. Stud. Math.} \textbf{38}. Amer. Math. Soc., Providence, RI.

\bibitem{Kai_1986}
\textsc{Kaimanovich, V. A.} (1986). Brownian motion and harmonic functions on covering manifolds. An entropic approach. \textit{Dokl. Akad. Nauk} \textbf{288} 1045--1049.

\bibitem{KifLed_1990}
\textsc{Kifer, Y.} and \textsc{Ledrappier, F.} (1990). Hausdorff dimension of harmonic measures on negatively curved manifolds. \textit{Trans. Amer. Math. Soc.} \textbf{318} 685--704.

\bibitem{Kingman_1968}
\textsc{Kingman, J. F. C.} (1968). The ergodic theory and subadditive stochastic processes. \textit{J. R. Stat. Soc. Ser. B. Stat. Methodol.} \textbf{30} 499--510.

\bibitem{Led_1995}
\textsc{Ledrappier, F.} (1995). Central limit theorem in negative curvature. \textit{Ann. Probab.} \textbf{23} 1219--1233.

\bibitem{Led_1988}
\textsc{Ledrappier, F.} (1988). Ergodic properties of Brownian motion on covers of compact negatively-curve manifolds. \textit{Bull. Braz. Math. Soc. (N.S.)} \textbf{19} 115--140.

\bibitem{Led_1990}
\textsc{Ledrappier, F.} (1990). Harmonic measures and Bowen-Margulis measures. \textit{Israel J. Math.} \textbf{71} 275--287.

\bibitem{Led}
\textsc{Ledrappier, F.} (1988). Profil d'entropie dans le cas continu. \textit{Ast\'erisque} \textbf{236}. Soci\'et\'e Math\'ematique de France, Paris.

\bibitem{Led_1987}
\textsc{Ledrappier, F.} (1987). Propri\'et\'e de Poisson et courbure n\'egative. \textit{C. R. Math. Acad. Sci. Paris.} \textbf{305} 191--194.

\bibitem{LedShu_2017}
\textsc{Ledrappier, F.} and \textsc{Shu, L.} (2017). Differentiating the stochastic entropy for compact negatively curved spaces under conformal changes. \textit{Ann. Inst. Fourier} \textbf{67} 1115--1183.

\bibitem{PPS}
\textsc{Paulin, F.}, \textsc{Pollicott, M.} and \textsc{Schapira, B.} (2015). \textit{Equilibrium states in negative curvature}. \textit{Ast\'erisque} \textbf{373}. Soci\'et\'e Math\'ematique de France, Paris.

\bibitem{Petersen_2016}
\textsc{Petersen, P.} (2016). \textit{Riemannian geometry}. \textit{Grad. Texts in Math.} \textbf{171} Springer, New York.

\bibitem{Pin_1978}
\textsc{Pinsky, M. A.} (1978). Stochastic Riemannian geometry. In \textit{Probabilistic Analysis and Related Topics} (A. T. Bharucha-Reid, ed.) \textbf{1} 199--236. Academic Press, New York.

\bibitem{PS_2018}
\textsc{Pit, V} and \textsc{Schpira, B.} (2018) Finiteness of Gibbs measures on noncompact manifolds with pinched negative curvature. \textit{Ann. Inst. Fourier} \textbf{68} 457--510.

\bibitem{Prat_1975}
\textsc{Prat, J.-J.} (1975). \'Etude asymptotique et convergence angulaire du mouvement Brownien sur une vari\'et\'e \`a courbure n\'egative. \textit{C. R. Math. Acad. Sci. Paris.} \textbf{280} 1539--1542. 

\bibitem{Riq_2018}
\textsc{Riquelme, F.} (2018). Ruelle's inequality in negative curvature. \textit{Discrete Contin. Dyn. Syst.} \textbf{38} 2809--2825.

\bibitem{Yue_1991}
\textsc{Yue, C.} (1991). Integral formulas for the Laplacian along the unstable foliations to rigidity problems for manifolds of negative curvature. \textit{Ergodic Theory Dynam, Systems} \textbf{11} 803--819.
\end{thebibliography}
\end{document}